\newtheorem{thm}{Theorem}
\newtheorem{prop}[thm]{Proposition}
\newtheorem{rmq}[thm]{Remark}
\newtheorem{lem}[thm]{Lemma}
\newtheorem{cor}[thm]{Corollary}
\newtheorem*{defi*}{Definition}
\newtheorem*{assu*}{Assumption}
\newcommand{\po}{\left(}
\newcommand{\pf}{\right)}
\newcommand{\co}{\left[}
\newcommand{\cf}{\right]}
\newcommand{\cco}{\llbracket}
\newcommand{\ccf}{\rrbracket}
\newcommand{\R}{\mathbb R}
\newcommand{\N}{\mathbb N}
\newcommand{\W}{\mathcal W}
\newcommand{\dd}{\mathrm{d}}
\newcommand{\na}{\nabla}
\newcommand{\1}{\mathbbm{1}}
\newcommand{\Id}{I_d}
\newcommand{\bx}{\bar x}
\newcommand{\bv}{\bar v}
\newcommand{\nv}[1]{#1}
\newcommand{\err}[1]{#1}
\renewcommand{\phi}{\varphi}
\newcommand{\indep}{\perp \!\!\! \perp}
\newcommand{\refsmooth}{$\mathbf{(\na Lip)}$}
\newcommand{\refconvex}{$\mathbf{(Conv)}$}
\newcommand{\refordren}{$\mathbf{(\na^2 pol(}\ell\mathbf{))}$}
\newcommand{\refind}{$\mathbf{(\indep)}$}
\title{High-dimensional MCMC with a standard splitting scheme for the underdamped Langevin diffusion.}
 \author{Pierre Monmarch\'e}
\begin{document}

\maketitle

\begin{abstract}
The efficiency of a Markov sampler based on the underdamped Langevin diffusion is studied for  high dimensional targets with convex and smooth potentials. We consider  a classical second-order integrator which requires only one gradient computation per iteration. Contrary to previous works on similar samplers, a dimension-free contraction of Wasserstein distances and convergence rate for the total variance distance are proven for the discrete time chain itself.  Non-asymptotic Wasserstein and total variation efficiency bounds and concentration inequalities are obtained for both the Metropolis adjusted and unadjusted chains. \nv{In particular, for the unadjusted chain,} in terms of the dimension  $d$  and  the desired accuracy $\varepsilon$, the Wasserstein efficiency bounds are of order $\sqrt d / \varepsilon$ in the general case, $\sqrt{d/\varepsilon}$ if the Hessian of the potential is Lipschitz, and $d^{1/4}/\sqrt\varepsilon$ in the case of a separable target, in accordance with known results for other kinetic Langevin or HMC schemes.
\end{abstract}

%

\section{Introduction}

The Langevin diffusion (also called underdamped or kinetic Langevin diffusion) is the Markov process on $\R^d\times \R^d$ that solves the SDE
\begin{equation}\label{Eq:Continu-Langevin}
\left\{ \begin{array}{rcl}
\dd X_t & = & V_t \dd t \\
\dd V_t & = & -\na U(X_t) \dd t - \gamma V_t \nv{\dd t} + \sqrt{2\gamma} \dd B_t
\end{array}\right. 
\end{equation}
where $B$ is a standard $d$-dimensional Brownian motion, $U\in\mathcal C^2(\R^d)$ is called the potential (or log-likelihood) and $\gamma>0$ is a friction (or damping) parameter. Under general assumptions, it is known to be ergodic with respect to  the Gibbs measure with Hamiltonian $H(x,v) = U(x) +|v|^2/2$, namely the probability measure $\pi$  on $\R^{2d}$ with density proportional to $\exp(-H(x,v))$. It is a central subject of study in a wide variety of domains, at the intersection of probabilities, PDE and statistical physics, modeling e.g. plasmas or stellar systems. In this work we focus on the use of the Langevin diffusion in    Markov Chain Monte Carlo  (MCMC) algorithms in order to estimate expectations with respect to $\pi$.

 This process has been used for decades in molecular dynamics (MD), in particular due to physical motivations related to the motion of particles in classical physics (see  \cite{HOROWITZ,OldLangevin1,OldLangevin2,Leimkuhler,LelievreStoltz,LelievreFreeEnergy,SLSCC,BussiParrinello,Bourabee1,Bourabee2} and references within). In this context, the objective is not necessarily to compute only statistical averages, but also dynamical properties, like diffusion coefficients, transition times, reaction paths, etc., so that it is crucial to sample a trajectory of (a discrete-time approximation of) the Langevin diffusion, and not of another process with the same statistical equilibrium. More recently, the Langevin diffusion has gained some interest in the computational statistics and learning community \cite{Chatterji1,Chatterji2,Chatterji3,dalalyan1,Zajic}, in which case there is no preferred underlying dynamics and only statistical averages with respect to $\pi$ matter. The present work follows this viewpoint. Besides, in this context,  the original question is to compute expectations with respect to $\pi_1\propto \exp(-U)$ the first $d$-dimensional marginal of $\pi$, and the velocity $V$ is added as an auxiliary variable in order to enhance the sampling. This is called a lifted MCMC method.
 
 We will focus on the case where $U$ is $m$-convex and $L$-smooth (see Assumptions~\refsmooth\ and \refconvex\ below).
 These are usual conditions in the statistics community to compare MCMC methods, the objective being then to obtain non-asymptotic explicit estimates in term of the dimension $d$, see e.g. \cite{durmus2019,Chatterji1,dalalyan1}. These conditions are far from natural for MD applications where, except for very particular harmonic toy models, the potentials are highly non-convex and singular due to strong short-range repulsion of  nuclei. On the other hand, and contrary to the recent works in computational statistics, we will study a discretization scheme of  \eqref{Eq:Continu-Langevin}  obtained by a splitting procedure, as commonly used in MD  \cite{BussiParrinello,Leimkuhler,Bourabee1,Bourabee2,SLSCC} and implemented in all MD codes. 
 
The study for MCMC purpose of a discretization of the Langevin diffusion for a convex and smooth potential has recently been conducted in \cite{Chatterji1,dalalyan1,Chatterji3,Zajic}. Concerning Metropolis adjusted chains, some non-asymptotic results have recently been established in  \cite{Dwivedi2,Dwivedi,Mangoubi,Mangoubi2,EberleHMC} for  HMC  samplers or other Metropolis-adjusted algorithms.   Let us give an informal summary of our contribution with respect to these works (a more detailed and quantitative discussion is provided in Section~\ref{Sec:otherworks} after our results have been rigorously stated).
 
 \begin{itemize}
 \item As far as non-asymptotic efficiency bounds are concerned, standard splitting discretisation  schemes (such as the OBABO chain studied in the present work, see  Section~\ref{Sec:OBABO}) had not been studied yet. Contrary to the first order scheme of \cite{Chatterji1,dalalyan1,Chatterji2,Zajic}, the OBABO sampler is a second order scheme (in the time-step). Like classical schemes, it only requires one computation of $\na U$ per step, contrary to the second order scheme of \cite{dalalyan1,dalalyan2} that also requires a computation of $\na^2 U$.  It belongs to the class of splitting schemes considered in \cite{Leimkuhler}, but \cite{Leimkuhler} only provides a formal expansion of the invariant measure in the time-step, and only for two particular splitting (and not the one studied here). A second order scheme with the same computational complexity is also introduced in \cite{Chatterji3}, and we obtain similar results in term of efficiency.   Contrary to the schemes studied in \cite{Chatterji3,Leimkuhler}, the OBABO  scheme is naturally connected to HMC type samplers, and in particular the discretization bias at equilibrium is only due to a deterministic approximation of the Hamiltonian dynamics by a Verlet (or leapfrog) scheme, which has some nice consequences in the analysis.
 \item Contrary to the previous works on other schemes based on the underdamped Langevin diffusion, instead of establishing long-time convergence estimates for the continuous-time process and then conducting an analysis of the discretization error,  we directly prove a Wasserstein contraction for the discrete time Markov chain itself. This classically yields nice concentration results (and thus non-asymptotic confidence intervals) for empirical averages\nv{, see Theorem~\ref{thm_main:concentration}, which is not the case in previous works}. Then, we study the bias at equilibrium separately. Notice that, ultimately, gathering these two parts yields efficiency bounds that are similar to those obtained with the method that relies only on the convergence rate of the continuous-time process, nevertheless this work was initially motivated by the theoretical question considered in \cite{QinHobert} of obtaining convergence rates for (discrete-time) MCMC samplers in high dimension. \nv{This issue is not addressed with the basic method. Finally, with our method, we obtain that empirical averages of the discrete-time chain have a (biased) long-time limit (again, this is Theorem~\ref{thm_main:concentration}), which is a first step for using Romberg interpolation techniques, see e.g. \cite{DurmusRomberg,Bach} (this issue is postponed to a future work, see \cite{MoiRomberg}).}
 \item We establish a Wasserstein/total variation regularization property. To our knowledge, this is the first result of this kind for a Markov chain based on an hypoelliptic non-elliptic diffusion. As a consequence, contrary to \cite{Chatterji1,dalalyan1,Zajic}, we obtain results for the total variation distance. However, we acknowledge that schemes that are based on the exact solving of the SDE~\eqref{Eq:Continu-Langevin} but with $\na U(X_t)$ replaced by $\na U(X_{\delta \lfloor t/\delta\rceil})$  where $\delta>0$ is the time-step (which is the case in \cite{Chatterji1,dalalyan1,Chatterji3,Zajic} and not of our scheme) have Gaussian transitions and thus enjoy a similar  regularization property, although it hasn't been stated in previous works. In \cite{Chatterji3}, bounds are obtained for the Kullback-Leibler divergence, which is stronger, but with hypocoercive PDE techniques, while we use an elementary coupling approach. Adapting their method to our case is not straightforward, as one iteration of our scheme is not based on solving \eqref{Eq:Continu-Langevin} with fixed forces.
 \item In order to see that second order schemes are more efficient than first order ones, the gradient Lipschitz assumption is not sufficient, some information is required on higher order derivatives of $U$. For this reason, in \cite{dalalyan1,Chatterji3}, the assumption that $\na^2 U$ is Lipschitz is enforced. We work under a weaker condition (see Assumption~\refordren\ below), that holds for instance if $\|\na^{(k)} U\|_\infty < +\infty$ for some $k>2$. When $\na^2 U$ is Lipschitz, we obtain efficiency bounds similar to \cite{dalalyan1,Chatterji3}.
 \item We establish non-asymptotic efficiency bounds and confidence intervals for the Metropolis-adjusted OBABO scheme, which is an HMC sampler with a particular choice of parameters. This is the first results of this kind for a Metropolis-adjusted sampler based on the (kinetic) Langevin equation. By comparison, the recent works  \cite{Dwivedi2,Mangoubi,Mangoubi2,EberleHMC} are concerned with the classical HMC sampler, with complete refreshment of velocities separated by long run of the Hamiltonian dynamics (and do not provide confidence intervals). Also, contrary to \cite{Dwivedi2,Dwivedi,Mangoubi,Mangoubi2}, we do not study the case of so-called warm start for the initial distribution, and we obtain non-asymptotic bounds that involve only some moments of the initial distribution (similarly to the unadjusted case). 
 \end{itemize}

 We are mainly interested in the dependency in the dimension $d$, the time-step $\delta$ and the accuracy $\varepsilon$ of the explicit estimates. \nv{Concerning the dependency on other parameters like $m$ or $L$, in practice, all our estimates could be improved by considering a preconditioned algorithm as in \cite{Zajic} and/or by taking into account explicitly a Gaussian part of $U$, i.e. decomposing $U(x) = x\cdot S^{-1}x/2 + \tilde U(x)$ with some covariance matrix $S$ and potential $\tilde U$, see e.g. \cite{Pavliotis}}.

\medskip

The article is organized as follows. The OBABO sampler and its Metropolis-adjusted counterpart are presented in Section~\ref{Sec:definitions}, together with some general considerations on Wasserstein distances. Our main results are stated in Section~\ref{Sec:mainresults}, decomposed as: results on the long-time behavior of the OBABO chain (Section~\ref{Sec:mainresOBABO}); on the equilibrium bias (Section~\ref{Sec:mainresBias}); on the Metropolis-adjusted sampler (Section~\ref{Sec:mainresMOBABO}). The results are discussed and related to other works in Section~\ref{Sec:otherworks}. Section~\ref{Sec:proofOBABO} contains the proofs for the long-time behaviour of the OBABO sampler, Section~\ref{Sec:CoupleVerlet} the proofs for the equilibrium bias and Section~\ref{Sec:proofMOBABO} the proofs for the Metropolis-adjusted chain.

\section{Definitions}\label{Sec:definitions}

\subsection{A second-order scheme for the Langevin diffusion}\label{Sec:OBABO}

We present the Markov chain which is the main object of study of this work, obtained as a time-discretization of the Langevin diffusion (already considered by Horowitz in \cite{HOROWITZ}, see also \cite{BussiParrinello} and references within). First, we simply define the transition of the Markov chain, and the rest of the section is devoted to the motivation and discussion of this definition.  Let $\delta,\gamma>0$ be respectively the time-step and friction coefficient, and denote $\eta = e^{-\delta\gamma/2}$. We consider the \nv{time-homogeneous} Markov chain $(x_n,v_n)_{n\in\N}$ on $\R^d\times\R^d$ with transitions given by
\begin{equation}
\left\{\begin{array}{rcl}
x_{1} & = & x_0+ \delta \po \eta v_0 + \sqrt{1-\eta^2} G\pf  - \frac{\delta^2}2 \na U(x_0)\\
v_1 & = & \eta^2v_0 - \frac{\delta\eta }{2}\po \na U(x_0) + \na U(x_1)\pf + \sqrt{1-\eta^2} \po \eta G + G'\pf\,,
\end{array}\right.\label{Eq:ABOBA}
\end{equation}
where $G$ and $G'$ are two independent standard (mean $0$ variance $\Id$)  $d$-dimensional Gaussian random variables. Denote by $P$ the Markov transition operator associated to the chain, i.e. $P\varphi(x_0,v_0) = \mathbb E(\varphi(x_1,v_1))$ for all measurable bounded $\varphi$.

A first very important remark is that, although it seems that in one iteration, $\na U$ has to be computed for two different values of $x$, this is in fact only true for the first iteration, afterward $\na U(x_n)$ is already known from the previous iteration and only $\na U(x_{n+1})$ is computed. A second remark is that, contrary to the schemes considered in e.g. \cite{Chatterji1,dalalyan1,Chatterji3,Zajic} (where the transition is obtained by following for a time $\delta$ the continuous-time process \eqref{Eq:Continu-Langevin} but with constant forces $\na U(x_t) = \na U(x_0)$), the law of  $(x_{1},v_{1})$  conditionally to $(x_0,v_0)$ is not a Gaussian distribution, because of the term $\na U(x_1)$ in $v_1$ (except in the particular case where $U$ is quadratic).

The scheme makes more sense when it is decomposed as the following successive steps:
\begin{align*}
v_0'\  & =  \ \eta v_0 + \sqrt{1-\eta^2} G\tag{O}\\
v_{1/2}\ &=  \  v_0' - \delta/2 \na U(x_0) \tag{B}\\
x_1 \ &= \  x_0 + \delta v_{1/2} \tag{A}\\
v_1'\  & = \ v_{1/2} - \delta/2 \na U(x_1)\tag{B}\\
v_1 \ & = \  \eta v_1' + \sqrt{1-\eta^2} G'\,.\tag{O}
\end{align*}
Here,  we use the notations O, \nv{B and A} of \cite{Leimkuhler}, referring respectively to the damping (or friction/dissipation, or partial resampling), the acceleration and the free transport parts of the dynamics.  So, in the rest of the article we will refer to the Markov chain $(x_n,v_n)_{n\in\N}$ as  the OBABO sampler which, contrary to some of its other names like second order Langevin or midpoint Euler-Verlet-Midpoint Euler \cite{BussiParrinello,HOROWITZ,LelievreFreeEnergy}, has the advantage to give a compact yet explicit description of the specific scheme (in particular by comparison with other second order schemes for the underdamped Langevin diffusion).

Note that the parts BAB of the scheme is the classical velocity Verlet algorithm for the Hamiltonian dynamics. More generally, the Verlet, OBABO, or BAOAB (from \cite{Leimkuhler}) algorithms all shares a common palindromic form since they are obtained from a Strang/Trotter splitting of the generator of the Langevin diffusion. We now give a brief and informal presentation of the latter, and refer to \cite{Leimkuhler,LelievreFreeEnergy} or \cite[Section 5]{Monmarche2019Kinetic_walk} for more details. The transition semigroup $P_t$ associated to \eqref{Eq:Continu-Langevin} is informally of the form $e^{t\mathcal L}$ where the infinitesimal generator $\mathcal L$ can be decomposed as $\mathcal L = \mathcal L_A+\mathcal L_B +  \mathcal L_O$ with
\[\mathcal L_A \ = \  v\cdot \na_x\,\qquad \mathcal L_B \ = \ -\na U(x) \cdot \na_v \,\qquad \mathcal L_O \ = \  -\gamma v\cdot \na_v + \gamma  \Delta_v\,.\]
Using that $e^{\delta(a+b)} = e^{\delta/2a}e^{\delta b}e^{\delta/2a} + o(\delta^2)$, we get formally a second order approximation of $P_\delta$ by
\[P_\delta \ = \ e^{\delta/2 \mathcal L_O}e^{\delta/2 \mathcal L_B}e^{\delta  \mathcal L_A}e^{\delta/2 \mathcal L_B}e^{\delta/2 \mathcal L_O} + o(\delta^2)\,.\]
The operator on the right-hand side is exactly the transition kernel of the OBABO sampler. At least formally, the fact that the transition kernel of the chain approximates the transition semi-group of the continuous process up to second order in $\delta$ can be shown to yield a similar order for the error on the invariant measure, see \cite{Leimkuhler,Monmarche2019Kinetic_walk} for   general discussions \nv{and more details} and  Proposition~\ref{prop_main:biais} below for a rigorous and quantitative statement in the particular OBABO case. \nv{More precisely, in  Proposition~\ref{prop_main:biais}, we get an error term on the invariant measure of order $\delta^2$ for the Wasserstein and total variation distances, to compare to e.g. \cite[Theorem 2]{dalalyan1} or the proof of   \cite[Theorem 1]{Chatterji1} where the error is of order $\delta$ for the Wasserstein distance (see Section~\ref{Sec:otherworks} for the consequence of this on the efficiency bounds).} For convex and smooth potentials, for a suitable choice of $\gamma$, the continuous-time process \eqref{Eq:Continu-Langevin} converges to equilibrium at a rate that is independent of the dimension, so that the error estimates obtained in \cite{Chatterji1,dalalyan1,Chatterji3,Zajic} depends on the dimension only because of the error on the invariant measure (which requires the time-step to be small enough). By replacing a first order discretization scheme by a second order one, we expect to improve the estimates accordingly, which is indeed what is observed in \cite{dalalyan1,Chatterji3} where other second-order schemes are considered, \nv{and in the present work (again, we refer to Section~\ref{Sec:otherworks})}.

The reason  we chose to study the OBABO algorithm rather than the BAOAB one or other similar scheme is the following. The OBABO scheme contains at its core a (deterministic) Verlet part BAB, which is time reversible (in the physicist sense, i.e. up to a reflection of the velocity). This fact classically leads to a natural Metropolized version of the algorithm, of Hamiltonian (or Hybrid) Monte Carlo (HMC) type, presented below.  This Metropolis-adjusted algorithm is interesting in itself, and we use it as an auxiliary tool in the study of the bias of the OBABO chain. Besides, an unintended benefit of our choice of integrator is that, for the OBABO scheme, establishing a Wasserstein/total variation regularization property (Proposition~\ref{prop_main:W/TV}) is rather straightforward because of the particular location of the noise in the scheme. That being said, it should be possible to adapt most (if not all) of our arguments to the BAOAB or other similar second order schemes like in \cite{Chatterji3}. 
 Finally, remark that the BAOAB scheme is promoted in \cite{Leimkuhler} because, in the overdamped limit $\gamma\rightarrow +\infty$, the invariant measure is correctly sampled at fourth order in the time-step. However, as $\gamma\rightarrow +\infty$, the convergence rate of the chain goes to zero, so it is not clear that this argument is decisive in the choice of the integrator (after a suitable scaling of time, the BAOAB converges as $\gamma\rightarrow +\infty$ to a discretization scheme of the overdamped Langevin diffusion which has a correct equilibrium up to second order).

Recall the goal of the algorithm is to compute averages with respect to $\pi_1$  the first $d$-dimensional marginal of $\pi$. As such, we are not constrained for the equilibrium of the velocities,  and typically we could chose a symmetric Gaussian distribution with variance different from $1$. Considering the Hamiltonian $H(x,v) = U(x)+|v|^2/(2\sigma^2)$, we would have a new parameter $\sigma$ to optimize, which could improve the results.  Nevertheless, in that case, we can always chose $\sigma= 1$ by the change of variables $v\leftarrow v/\sigma$, $\delta\leftarrow \delta \sigma$ and $\gamma \leftarrow \gamma/\sigma$. As a consequence, without loss of generality, we take $\sigma= 1$.

 As mentioned above, the particular structure of the scheme has some nice implications for the theoretical study. Of course, in practice, if we are only interested in computing averages that only involve the position $x_n$, then the scheme is equivalent to alternate a Verlet step BAB and a partial refreshment O$^2$ (which is simply O but with a full rather than half time-step, i.e. $\eta$ is replaced by $e^{-\delta\gamma}$), since $($OBABO$)^n=$O$($BABO$^2)^{n-1}$BABO and the first and last O have no effect on the position.
 
 Finally, let us mention that our analysis can be straightforwardly adapted to get results on the O(BAB)$^k$O scheme for some fixed $k\in \N_*$ independent from $\delta$ (i.e. $k$ Verlet steps are performed between the partial refreshments). Indeed, as $\delta$ vanishes, the corresponding chain still converges toward the continuous time process \eqref{Eq:Continu-Langevin}, which is the main ingredient of our results concerning long-time convergence, and similarly there is no particular difficulty to adapt the results concerning discretization errors. For fixed parameters $k$ and $\gamma$, this extension does not yield any improvement in terms of scaling in $\delta$ and $d$ (also, note that one iteration of  O(BAB)$^k$O requires $k$ computations of $\na U$) and thus we don't consider it to avoid an unnecessary inflation of notations.
 
\subsection{Metropolis-adjusted algorithm}\label{Sec:MOBABO}

The O parts of the scheme leaves invariant the standard Gaussian distribution for the velocities, hence the target $\pi$. This is not the case of the Verlet part BAB because of the numerical approximation. As a consequence, $\pi$ is not invariant for the OBABO chain. Nevertheless, this is classically fixed by adding a Metropolis  accept/reject step on this part of the dynamics, which gives an HMC algorithm \cite{HOROWITZ,Neal}. We now detail this. In the following we denote
\[\Phi_V(x,v) \ = \ \po x+ \delta v - \frac12 \delta^2 \na U(x) , v-\frac12 \delta \po \na U(x) + \na U\po x+ \delta v - \frac12 \delta^2 \na U(x)\pf \pf\pf\,,\]
so that the steps BAB  read  $(x_{1},v_{1}') = \Phi_V(x_0,v_0')$. Denote also $\Phi_R(x,v) = (x,-v)$. It is then straightforward to check that the Verlet algorithm is time-reversible, i.e. $\Phi_R \Phi_V\Phi_R\Phi_V(x,v)=(x,v)$ for all $x,v\in\R^d$. As a consequence, the deterministic proposal kernel
\[q\po (x,v),\dd y \dd w\pf \ = \ \delta_{\Phi_R\Phi_V(x,v)}(\dd y \dd w)\]
is symmetric. Denote $P_{MH}$ the Markov transition  operator of the Metropolis-Hastings algorithm with proposal kernel $q$. A transition of the corresponding chain is given by:
\[\begin{array}{rcll}
(x',v')  & = & \Phi_V(x_0,v_0) & \\ 
(x_1,v_1)  & = & (x',-v') & \text{if } W\leqslant \alpha(x_0,v_0):=\exp\po - \po H\po \Phi_V(x_0,v_0)\pf - H(x_0,v_0)\pf_+\pf\\
& =  & (x_0,v_0) & \text{otherwise,}
\end{array}\]
where $W$ is uniformly distributed over $[0,1]$. We used that $H(x',v')=H(x',-v')$. By construction of the Metropolis-Hastings algorithm, $P_{MH}$ is reversible (in the probabilistic sense) with respect to $\pi$. Moreover, $\pi$ is also invariant by the transformation $v\leftrightarrow -v$, namely $\pi P_R = \pi$ where $P_R\varphi(x,v)=\varphi(x,-v)$. Denote $P_{MV} = P_R P_{MH}$, which is similar to a Verlet transition BAB of the previous section except for the accept/reject step. Remark that, in case of rejection, the velocity is reflected. This is invisible in the classical HMC algorithm for which the velocity is fully resampled after each  accept/reject step (which will not be the case here).

Denote by $P_{O}$  the Markov transition kernel on $\R^d\times\R^d$ corresponding to the O transition, i.e.
\begin{eqnarray*}
(x_{1},v_{1}) &=& \po x_0, \eta  v_0 + \sqrt{1-\eta^2} G\pf \qquad \text{with}\qquad G \sim \mathcal N\po 0,\Id\pf\,.
\end{eqnarray*}
In other words, $P_O$ is given for any bounded observable $\varphi$ by
\[P_O \varphi (x,v) \ = \ (2\pi)^{-d/2}\int_{\R^d} \varphi\po x, \eta v + \sqrt{1-\eta^2} w\pf e^{-|w|^2/2} \dd w\,.\]
The Metropolis-adjusted version of the OBABO algorithm, introduced in \cite{HOROWITZ}, is the Markov chain with transition operator
\[P_M \ = \ P_O P_{MV} P_O\,.\]
By construction, $\pi$ is invariant for $P_M$. Although $\pi$ is reversible for $P_O$, $P_R$ and $P_{MH}$, this property is not conserved by composition and $\pi$ is not reversible with respect to $P_M$. Besides, using that $P_RP_OP_R = P_O$ and $P_R^2 = Id$, we see that that   the adjoint operator $P_M^* = P_O P_{MH}P_RP_O$ in $L^2(\pi)$ is given by  $P_M^*  = P_R P_M P_R$, i.e. the process is reversible in the physics sense, up to a reflection of the velocity.  We won't use this property in the article but we note that it allows  to get an $L^2$ spectral gap from a contraction in an other distance, like a Wasserstein distance or the $H^1$ norm, as in the reversible case, see e.g. \cite{DoucetHMC}.

The operator $P_M$ corresponds to the following transition:
\[\begin{array}{rcll}
v_0'   & = &   \eta v_0 + \sqrt{1-\eta^2} G &  \\
v_{1/2}  &= &    v_0' - \delta/2 \na U(x_0)  &  \\
\tilde x_1   &= &   x_0 + \delta v_{1/2}  & \\
\tilde v_1  & = &  v_{1/2} - \delta/2 \na U(\tilde x_1) & \\
(x_1,v_1') &= & (\tilde x_1,\tilde v_1) & \text{if }W\leqslant \alpha(x_0,v_0')\\
& =& (x_0,-v_0') & \text{otherwise}\\
v_1 &=& \eta v_1' + \sqrt{1-\eta^2} G'\,. &
\end{array}\]
Remark that, in case of acceptation (namely if $W\leqslant \alpha(x_0,v_0')$) then this is exactly the OBABO transition. Similar Metropolis-adjusted discretizations of the Langevin diffusion have been studied in  \cite{SLSCC,Bourabee1,Bourabee2,Pavliotis}, see also references within. 

Due to the particular scaling of the partial refreshment mechanism, this HMC chain converges as $\delta$ vanishes to the Langevin diffusion rather than to the continuous-time Randomized HMC (see \cite{DoucetHMC} and references within) where dissipativity in the velocity is ensured by a (possibly partial) resampling of the velocities at constant rate, rather than an Ornstein-Uhlenbeck diffusion.

Following motivations similar to the unadjusted case, we call this specific Metropolis-adjusted splitting scheme the OM(BAB)O chain (the M($\cdot$) standing for Metropolis).

\subsection{Wasserstein distances}

Let $\rho$ be  a distance on $\R^d$. For $p\in[1,\infty)$, denote by $\mathcal P_{p,\rho}(\R^{d})$ the set of probability distributions on $\R^d$ having a finite $p$-th moment for $\rho$, i.e. $\nu\in\mathcal P_{p,\rho}(\R^d)$ if
\[\int_{\R^d} \rho^p(0,x) \nu(\dd x) \ < \ +\infty\,.\]
For $p\in[1,\infty]$ the Wasserstein distance $\W_{p,\rho}$ on $\mathcal P_{p,\rho}(\R^{d})$ is given for $\nu,\mu\in\mathcal P_{p,\rho}(\R^d)$ by
\[\W_{p,\rho} \po \nu,\mu\pf \ = \ \inf_{\eta\in \Gamma(\nu,\mu)}   \| \rho \|_{L^p(\eta)}\,, \]
where $\Gamma(\nu,\mu)$ is the set of transference plan of $\nu$ and $\mu$, namely is the set of probability distributions on $\R^d\times\R^d$ with first marginal $\nu$ and second marginal $\mu$. In other words,  
\[\W_{p,\rho}^p \po \nu,\mu\pf \ = \ \inf_{X\sim \nu,Y\sim\mu} \mathbb E\po  \rho^p(X,Y)  \pf\,, \]
although this is only a formal definition since the infimum is not taken on a proper set. If $(X,Y)$ is a random variable with law $\eta\in\Gamma(\nu,\mu)$, we say that $(X,Y)$ is a coupling of $\nu$ and $\mu$. Implicitly, $\W_{\rho}$ means $\W_{1,\rho}$, and $\W_p$ means $\W_{p,\rho}$ with $\rho(x,y)=|x-y|$. In fact, unless otherwise specified, in $\R^d$, the name \emph{Wasserstein distances} usually refers to cases where $\rho$ is equivalent to the Euclidean metric.

Besides, another interesting case is the discrete metric $\rho(x,y) = 2\1_{x\neq y}$, in which case the associated $\W_1$ distance is the total variation norm
\[\W_{\rho}(\nu,\mu) \ = \ \|\nu - \mu\|_{TV} \ = \ \inf_{X\sim \nu,Y\sim\mu} 2 \mathbb P \po X\neq Y\pf\,.\]

Given a Markov operator $Q$ acting on $\mathcal P_p(\R^d)$, it is easily seen by conditioning on the initial condition that, for $C>0$, the proposition 
\[ \forall \nu,\mu\in\mathcal P_p(\R^d)\,, \qquad \W_{p,\rho}(\nu Q,\mu Q) \ \leqslant \ C \W_{p,\rho}(\nu,\mu)\]
is equivalent to 
\[ \forall x,y\in\R^d\,, \qquad \W_{p,\rho}(  Q(x,\cdot), Q(y,\cdot)) \ \leqslant \ C \rho(x,y)\,.\]
Besides, the Jensen's inequality implies that $\W_{p,\rho} \leqslant \W_{q,\rho}$ whenever $p\leqslant q$, which together with the previous remark implies that a contraction of the $\W_{q,\rho}$ distance for some $q$ induces a contraction of $\W_{p,\rho}$ for all $p \leqslant q$.

For a distance $\rho$ on $\R^d$ that is equivalent to  the Euclidean metric, from \cite[Corollary 5.22 and Theorem 6.18]{VillaniOldNew}, there always exists an optimal coupling for $\W_{p,\rho}$, i.e. for $\nu,\mu\in \mathcal P_{p}(\R^d)$ there exists a coupling $(X,Y)$ of $\mu$ on $\nu$ on some probability space $\Omega$ such that $\W_{p,\rho}^p(\nu,\mu) = \mathbb E \po \rho^p(X,Y)\pf$, and moreover $(\mathcal P_{p}(\R^d),\W_{p,\rho})$ is a Banach space. 

\section{Main results}\label{Sec:mainresults}

As already mentioned, we are principally interested in the smooth and convex case, corresponding to the following conditions.

\begin{assu*}[\textbf{$\na$Lip}]
There exists $L>0$ such that for all $x,y\in \R^d$,  
\[|\na U(x)-\na U(y)| \leqslant L |x-y|\,. \]
\end{assu*}

\begin{assu*}[\textbf{Conv}]
There exists $m>0$ such that for all $x,y\in \R^d$, 
\[(x-y)\cdot \po \na U(x) - \na U(y) \pf \geqslant m|x-y|^2 \,. \]
\end{assu*} 
Under Assumptions~\refsmooth\ and \refconvex, $U$ admits a unique global minimum $x_\star$ and for all $x\in\R^d$, 
\begin{equation}\label{eq:x_star}
|\na U(x)|\leqslant L|x-x_\star|\qquad\text{and}\qquad m |x-x_\star|^2 \leqslant 2U(x) \leqslant L|x-x_\star|^2\,.
\end{equation}
The two conditions don't have the same status.  The condition~\refsmooth\ will be enforced in all the article. It classically implies the stability of the OBABO scheme for $\delta$ small enough (this will be a consequence of our results). Approximating diffusions with non-Lipschitz coefficient leads to various difficulties and can lead to truncated algorithm, see e.g. \cite{Bourabee1} and references within. The condition~\refconvex\ gives a simple condition to obtain nice explicit convergence rates, and will not be used in most of the bias error analysis.

\subsection{Dimension free convergence rates}\label{Sec:mainresOBABO}
Recall $P$ is the Markov operator corresponding to the OBABO chain introduced in Section~\ref{Sec:OBABO}. In this section, we state our results obtained on the long-time behavior and regularization of this chain by itself, i.e. without referring to the continuous-time limit.

Under the the convex/smooth assumption, and for a suitable choice of the damping parameter, the Langevin diffusion is contractive, in the sense that its deterministic drift contracts the distances \cite{Malrieu,Chatterji1}. Hence, starting from two different initial states, the parallel coupling (i.e. considering the same Brownian noise) of two processes yields a deterministic contraction. The problem is quite more involved in a non-convex case where one has to take advantage of the noise to overcome potential barriers, in which case a combination of parallel and mirror (using reflected Brownian noises)  couplings and a suitable concave transformation of the distance can give a contraction in some modified $\mathcal W_1$ distance, see \cite{EberleGuillinZimmer,Chatterji2,dalalyan2}. An alternative argument in the non-convex case to get a $\mathcal W_2$ hypocoercive contraction is to combine an entropic hypocoercive contraction with a Wasserstein/entropy regularization, as in \cite[Theorem 2]{MonmarcheGuillin} (this gives sharper results than coupling arguments in the low temperature regime, see \cite{MonmarcheRecuitHypo}).

In the present convex case, the OBABO chain being an approximation of the Langevin diffusion, a simple parallel coupling does the job, and we get the following. 

\begin{thm}\label{thm_main:Wpcontract}
Under Assumptions~\refsmooth\ and \refconvex, suppose moreover that \nv{$\gamma \geqslant 2\sqrt{L}$ and that $\delta \leqslant m/(33\gamma^3)$}. Then, for all $p\geqslant 1$, $n\in \N$ and all $\nu,\mu\in\mathcal P_p(\R^{2d})$, 
 \[\W_p\po \nu P^n,\mu P^n\pf \ \leqslant \  K_1 \po 1-\delta \kappa\pf^{n/2} \W_p(\nu,\mu)\]
 with
 \[K_1\ \ = \ \nv{\sqrt{3}\max\po L^{1/2},L^{-1/2}\pf} \,,\qquad \kappa \ = \ \frac{m}{3\gamma} \,.\]
 Moreover, $P$ admits a unique invariant probability distribution $\pi_\delta$, and $\pi_\delta\in  \mathcal P_p(\R^{2d})$ for all $p\geqslant 1$. 
\end{thm}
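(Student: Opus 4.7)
The plan is to use a synchronous (parallel) coupling of two OBABO chains driven by the same Gaussian increments at each step. Since the noise then cancels in the differences, the process $(\Delta x_n, \Delta v_n) := (x_n - x_n', v_n - v_n')$ satisfies the purely deterministic recursion
\[
\Delta x_1 \ = \ \Delta x_0 + \delta \eta \Delta v_0 - \frac{\delta^2}{2} a, \qquad \Delta v_1 \ = \ \eta^2 \Delta v_0 - \frac{\delta \eta}{2}(a+b),
\]
where $a = \na U(x_0) - \na U(x_0')$ and $b = \na U(x_1) - \na U(x_1')$. Assumption \refconvex\ yields $\langle \Delta x_0, a\rangle \geqslant m |\Delta x_0|^2$ (and similarly for $\Delta x_1, b$), while \refsmooth\ gives $|a| \leqslant L|\Delta x_0|$ and $|b|\leqslant L|\Delta x_1|$; these are the only facts about $U$ I will use.

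The heart of the proof is to exhibit a positive definite quadratic form $\Phi(x,v) = \alpha|x|^2 + 2\beta\delta \langle x,v\rangle + \gamma_0 \delta^2 |v|^2$ on $\R^{2d}$, with coefficients $\alpha, \beta, \gamma_0$ depending only on $L,m,\gamma$, such that
\[
\Phi(\Delta x_1, \Delta v_1) \ \leqslant \ (1-\delta \kappa)\,\Phi(\Delta x_0, \Delta v_0)
\]
holds pointwise along any realization of the coupling. The cross term $2\beta \delta \langle x,v\rangle$ is the crucial ingredient: expanding it on $(\Delta x_1,\Delta v_1)$ and using convexity produces the only contribution of order $\delta^2$ involving $a$ and $b$, namely $-\delta^2 \eta \beta \langle \Delta x_0, a+b\rangle$, which is bounded above by a negative multiple of $\delta^2 m |\Delta x_0|^2$; expanding $\alpha|\Delta x_1|^2$ alone would push any such $a$-contribution to order $\delta^3$ and could never produce a geometric contraction. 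The hypotheses $\gamma \geqslant 2\sqrt L$ and $\delta \leqslant m/(33\gamma^3)$ enter here to absorb the higher-order remainders (in particular the coupling between the two components through $b = \na U(x_1) - \na U(x_1')$, which hides a factor $|\Delta x_1|^2 \leqslant 2|\Delta x_0|^2 + 2\delta^2 \eta^2 |\Delta v_0|^2 + O(\delta^4)$) into the leading term $\delta\kappa = \delta m/(3\gamma)$, and to enforce $\beta^2 < \alpha \gamma_0$ so that $\Phi$ remains comparable to $|x|^2 + |v|^2$. This bookkeeping, and the tracking of precise constants leading to the prefactor $K_1 = \sqrt{3}\max(L^{1/2}, L^{-1/2})$, is the main technical obstacle.

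Once the one-step contraction is established, iterating it along the coupling yields the pointwise bound $\Phi(\Delta x_n, \Delta v_n) \leqslant (1-\delta\kappa)^n \Phi(\Delta x_0, \Delta v_0)$, and the equivalence between $\Phi$ and the squared Euclidean norm on $\R^{2d}$ (controlled by the extreme eigenvalues of $\Phi$) translates into
\[
|(\Delta x_n, \Delta v_n)| \ \leqslant \ K_1 (1-\delta\kappa)^{n/2} |(\Delta x_0, \Delta v_0)|
\]
pointwise, for every realization of the coupling. Raising to the power $p$, taking expectation under an optimal $\W_p$-coupling of the initial laws $\nu$ and $\mu$, and then the infimum over all couplings, yields the $\W_p$ contraction stated in the theorem for every $p \geqslant 1$ simultaneously.

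For the invariant measure, fix any $z_\star \in \R^{2d}$. The contraction yields $\W_p\po\delta_{z_\star} P^n, \delta_{z_\star} P^{n+1}\pf \leqslant K_1 (1-\delta\kappa)^{n/2} \W_p\po\delta_{z_\star}, \delta_{z_\star} P\pf$, which is summable in $n$ because $\W_p\po\delta_{z_\star}, \delta_{z_\star} P\pf$ is finite (the one-step transition is Gaussian around a drift that depends polynomially on $z_\star$, hence has moments of every order). Consequently $(\delta_{z_\star} P^n)_{n\in\N}$ is Cauchy in the complete metric space $\po\Po_p(\R^{2d}), \W_p\pf$; its limit is the unique invariant probability $\pi_\delta$ (uniqueness being an immediate consequence of the contraction), and lies in $\Po_p(\R^{2d})$ for every $p\geqslant 1$, which proves the last claim of the theorem.
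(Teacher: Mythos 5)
Your high-level strategy — synchronous coupling so the Gaussian increments cancel, a pointwise one-step contraction of a well-chosen quadratic Lyapunov form $\Phi$, equivalence of $\Phi$ with the Euclidean norm to obtain $K_1$, then an optimal coupling and a Cauchy-sequence argument for $\pi_\delta$ — is exactly the one the paper uses. However, the particular form you choose, $\Phi(x,v)=\alpha|x|^2+2\beta\delta\langle x,v\rangle+\gamma_0\delta^2|v|^2$ with $\delta$-scaled cross and velocity coefficients, cannot deliver the claimed contraction, and this is not a notational slip. Writing $g_0=\nabla U(x_0)-\nabla U(x_0')$ and $g_1=\nabla U(x_1)-\nabla U(x_1')$ for your vectors $a,b$, the expansion of $\alpha|\Delta x_1|^2$ contributes the sign-indefinite term $2\alpha\delta\eta\langle\Delta x_0,\Delta v_0\rangle$ at order $\delta$, whereas — precisely because of the extra powers of $\delta$ you inserted — the cross term contributes only at order $\delta^2$ (since $\eta^2-1=O(\delta)$), the convexity term $-\beta\delta^2\eta\langle\Delta x_0,g_0+g_1\rangle$ is likewise $O(\delta^2)$, and the velocity term is $O(\delta^3)$. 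Taking $\Delta v_0=\Delta x_0\neq 0$ gives $\Phi(\Delta z_1)-\Phi(\Delta z_0)=2\alpha\delta|\Delta x_0|^2+O(\delta^2)>0$ for small $\delta$, incompatible with $\Phi(\Delta z_1)\leqslant(1-\delta\kappa)\Phi(\Delta z_0)$ for any $\kappa$ independent of $\delta$. Independently, your $\Phi$ degenerates as $\delta\to 0$ (smallest eigenvalue $O(\delta^2)$), so its equivalence constant with $|x|^2+|v|^2$ blows up like $1/\delta$, contradicting the $\delta$-independent $K_1=\sqrt3\max(L^{1/2},L^{-1/2})$ you state.

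The fix is to drop the $\delta$-scaling and take $\Phi(x,v)=|x|^2+2b\,x\cdot v+a|v|^2$ with $b=1/\gamma$ and $a=1/L$, which gives exactly $K_1^2=3\max(L,1/L)$. Then the convexity contribution $-b\delta\eta\langle\Delta x_0,g_0+g_1\rangle\leqslant-2bm\delta|\Delta x_0|^2+O(\delta^2)$ is $O(\delta)$, the velocity term supplies $-2a\gamma\delta|\Delta v_0|^2+O(\delta^2)$, and the two indefinite $O(\delta)$ contributions in $\langle\Delta x_0,\Delta v_0\rangle$ (one from $|\Delta x_1|^2$, one from $2b\langle\Delta x_1,\Delta v_1\rangle$ via $\eta^2-1$) cancel exactly because $b\gamma=1$. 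In other words, the $O(\delta)$ part of $\Phi(\Delta z_1)-\Phi(\Delta z_0)$ is $\delta\,\bar z\cdot(MA+A^TM)\bar z$ with $M$ the matrix of $\Phi$ and $A$ the linearized Langevin drift, and it is that genuine $O(\delta)$ quadratic form — not an $O(\delta^2)$ remainder — that \refconvex\ together with the hypotheses $\gamma\geqslant 2\sqrt L$ and $\delta\leqslant m/(33\gamma^3)$ renders negative. With this correction, the rest of your argument (iterating the pointwise bound, optimal $\W_p$-coupling, uniqueness and moments of $\pi_\delta$ via the Banach fixed point theorem in $\mathcal P_p$) matches the paper's proof.
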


This is proven in Section~\ref{sec:proofContract}, see Corollary~\ref{Cor:Wpcontract}.

\begin{rmq}
The restriction on $\gamma$ is consistent with all the works that studied the contraction of the parallel coupling for the continuous-time Langevin diffusion \cite{Malrieu,Chatterji1,dalalyan1,Zajic} (see also \cite{DoucetHMC} for the Randomized  HMC algorithm, since this is exactly the same computations after taking expectations). 
 \nv{In fact, as proven in  \cite[Proposition 4]{MonmarcheContraction}, for the continuous-time Langevin process, a Wasserstein contraction as in the proof of Theorem~\ref{thm_main:Wpcontract} holds for all potentials $U$ satisfying \refsmooth\ and \refconvex if and only if $L-m < \gamma (\sqrt{L}+\sqrt{m})$. When $m\ll L$ our condition $\gamma\geqslant 2\sqrt{L}$ is thus similar to  the optimal one up to a factor 2. In fact, it is clear that combining the proof of \cite[Proposition 4]{MonmarcheContraction} and of Theorem~\ref{thm_main:Wpcontract}, a Wasserstein contraction holds for the OBABO chain under the same optimal condition on $\gamma$ as in the time-continuous case, but then $\kappa$ and the condition on $\delta$ are less nice and this doesn't improve the convergence rate  in terms of the  dependency  on $m$ and $L$.}

The choice $\gamma= 2\sqrt{L}$ maximizes our bound on the contraction rate. \nv{In \cite{dalalyan1}, the optimal rate for the continuous-time process is proven to be $m/\gamma$, obtained with the choice $\gamma=\sqrt{m+L}$, which means in  the regime $m\ll L$ we are simply missing a factor $6$ from the optimal bound. Again, this factor $6$ can of course be reduced to get arbitrarily close to the optimal bound simply by following the proof and keeping sharper constants, to the cost of a stronger condition on $\delta$, and without changing the order in terms of $m$ and $L$}. 

Besides, as $\gamma\rightarrow +\infty$, we get a convergence rate  of order $1/\gamma$, which is sharp and well-known in the case of the continuous-time Langevin diffusion.

\nv{It is clear that a condition on $\delta$ is needed, since the process is not even stable when $\delta$ is too large. Here the condition $\delta \leqslant m/(33\gamma^3)$ is not sharp, it is used to get a simple expression. Anyway, we have in mind the regime where $m,L,\gamma$ are independent from the dimension and, as we will see in the next section, $\delta$ should vanish as $d\rightarrow +\infty$, so that the condition on $\delta$ is always satisfied in high dimension, which is why we chose simplicity over sharpness to state this condition.
}
\end{rmq}

As an hypoelliptic (non-elliptic) diffusion, the continuous-time Langevin diffusion~\eqref{Eq:Continu-Langevin} enjoys many nice regularization properties. In particular, under Assumption~\refsmooth, \cite[Corollary 4.7(2) and Remark 4.1]{GuillinWang} implies an instantaneous $\mathcal W_2$/relative entropy regularization, which together with the Pinsker inequality implies that
\[\|\nu P_t - \mu P_t\|_{TV} \ \leqslant \ \frac{C}{\min(1,t^{3/2})} \W_2(\nu,\mu)\]
for some $C>0$ for  all $\nu,\mu\in\mathcal P_2(\R^{2d})$ and $t>0$, where $(P_t)_{t\geqslant 0}$ is the transition semigroup associated to \eqref{Eq:Continu-Langevin}. For elliptic diffusions the result would typically hold with $\sqrt t$, but for the Langevin diffusion, there is no direct noise in the position and thus $t^{N+1/2}$ is to be expected where $N$ is the number of times one has to consider Lie brackets to fulfill Hörmander's condition (here $N=1$).

We now state a similar result, but at the level of the discrete time chain $P$. Convexity is not assumed.  We are not aware of any similar result for a Markov chain based on the discretization of an hypoelliptic non-elliptic diffusion (although, as mentioned in the introduction, it holds indeed in many cases).

\begin{prop}\label{prop_main:W/TV}
Under Assumption~\refsmooth, for all $\nu,\mu \in \mathcal P_1(\R^{2d})$,
\[\|\nu P - \mu P \|_{TV} \ \leqslant \ \frac {K_2}{\delta^{3/2} }  \W_1(\nu,\mu)\,,\]
with
\[K_2 \ = \ \sqrt{\frac{ \delta }{2\pi \po 1 -\eta^2\pf }}\max\po 2+ 3\delta^2 L/2, 3\delta\pf \nv{\  \underset{\delta\rightarrow 0}{\longrightarrow}\ \sqrt{\frac{2}{\pi\gamma} }   } \,.\]
\end{prop}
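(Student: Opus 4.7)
The plan is to exploit the conditional Gaussian structure of the OBABO transition, apply the total-variation chain rule along that decomposition, and transfer the pointwise estimate to probability measures via an optimal transport plan. First I would identify this structure: given $(x_0,v_0)$, the update equations give $x_1 = \mu_x + \delta\sqrt{1-\eta^2}\,G$ with $\mu_x = x_0 + \delta\eta v_0 - (\delta^2/2)\na U(x_0)$, so $x_1$ is Gaussian with covariance $\delta^2(1-\eta^2)I_d$. Solving for $G$ and substituting into the formula for $v_1$, the terms in $v_0$ and $\na U(x_0)$ cancel, yielding
\[v_1 \ = \ \mu_v(x_0,x_1) + \sqrt{1-\eta^2}\, G'\,,\qquad \mu_v(x_0,x_1)\ =\ \frac{\eta}{\delta}(x_1-x_0)-\frac{\delta\eta}{2}\na U(x_1)\,,\]
with $G'$ independent of $G$; hence $v_1\mid x_1$ is Gaussian with mean $\mu_v(x_0,x_1)$ and covariance $(1-\eta^2)I_d$.

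Now fix two starting points and denote $\Delta x = x_0-x_0'$, $\Delta v = v_0-v_0'$. I would combine the standard TV chain rule
\[\|P_1P_{2\mid 1}-Q_1Q_{2\mid 1}\|_{TV}\ \leqslant\ \|P_1-Q_1\|_{TV} + \E_{X\sim P_1}\|P_{2\mid X}-Q_{2\mid X}\|_{TV}\]
with the sharp Gaussian shift bound $\|\mathcal N(a,\sigma^2 I)-\mathcal N(b,\sigma^2 I)\|_{TV}\leqslant |a-b|\sqrt{2/(\pi\sigma^2)}$. The position marginal contributes a term of order $|\mu_x - \tilde\mu_x|/(\delta\sqrt{1-\eta^2})$, which by Assumption~\refsmooth is at most $[(1+\delta^2 L/2)|\Delta x|+\delta\eta|\Delta v|]/(\delta\sqrt{1-\eta^2})$. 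For the conditional $v$-term the crucial observation is that $\mu_v(x_0,x_1)-\mu_v(x_0',x_1)=-(\eta/\delta)\Delta x$ depends only on $\Delta x$ and not on $x_1$: since both chains are plugged in at the \emph{same} $x_1$, the non-linear $\na U(x_1)$ contributions cancel. This produces a contribution of order $\eta|\Delta x|/(\delta\sqrt{1-\eta^2})$ free of any Lipschitz loss on $\na U$ at the second step.

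Finally, given an optimal $\W_1$-coupling $(X,Y)$ of $\nu,\mu$, the triangle inequality yields $\|\nu P-\mu P\|_{TV}\leqslant \E\|P(X,\cdot)-P(Y,\cdot)\|_{TV}$. Bounding $a|\Delta x|+b|\Delta v|$ by a multiple of the Euclidean distance on $\R^{2d}$ (Cauchy--Schwarz) and rewriting the prefactor as $1/(\delta\sqrt{1-\eta^2}) = \delta^{-3/2}\sqrt{\delta/(1-\eta^2)}$, which tends to $\delta^{-3/2}/\sqrt{\gamma}$ as $\delta\to 0$, produces both the announced $\delta^{-3/2}$ scaling and the expression of $K_2$ after elementary book-keeping of numerical factors. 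The only genuinely non-routine ingredient is the cancellation that trivialises the conditional $v$-shift; without it, applying the $\na U$-Lipschitz bound on $\na U(x_1)$ would cost an extra $\delta L$ factor and spoil the estimate. The hypoelliptic exponent $3/2$ is ultimately dictated by the disparity of noise scales: the position noise has variance of order $\delta^3\gamma$ whereas the velocity noise is of order $\delta\gamma$, so matching positions via a shift coupling is a factor $\delta^{-1}$ more expensive than in an elliptic setting.
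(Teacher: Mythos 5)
Your proposal takes essentially the same route as the paper: write the transition as a Gaussian in position followed by a conditionally Gaussian velocity, couple each stage via the Gaussian-shift total-variation bound (the paper phrases this as an explicit two-stage coupling, you as the TV chain rule — these are the same argument), and transfer the pointwise estimate to measures through an optimal $\W_1$ plan. Your observation that the conditional velocity shift is \emph{exactly} $-(\eta/\delta)\Delta x$ with no residual $\na U$ term is in fact slightly sharper than the paper, which only notes that the $\na U(x_1')$ contributions vanish on the coupled event and keeps an extra $\delta L\,|\Delta x|$ Lipschitz term in $|c_1-c_2|$; both give the same $\delta^{-3/2}$ scaling and leading constant.
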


This is proven in Section~\ref{Sec:proofWTV}, see Proposition~\ref{prop:W/TV}.

\begin{rmq}
We retrieve for $\delta\rightarrow 0$ the expected $\delta^{3/2}$.  In fact, more precisely, as can be seen in Proposition~\ref{prop:W/TV} below, we get the scaling $\delta^{3/2}$ for the positions and $\sqrt\delta$ for the velocities.
\end{rmq}

Combining the two previous results naturally yields a convergence rate for the total variation distance. Following \cite{RobertTweedie,QinHobert}, we consider the total variation convergence rate of $P$ in the sense of: 
\[r_*(P) \ :=  \ \exp \po \sup_{z\in\R^{2d}} \limsup_{n\rightarrow +\infty} \frac{\ln \|\delta_z P^n - \pi_\delta\|_{TV}}{n}\pf\,.\]
An immediate consequence of Theorem~\ref{thm_main:Wpcontract} and of Proposition~\ref{prop_main:W/TV} is the following.
\begin{cor}\label{Cor:r*P}
Under the conditions of Theorem~\ref{thm_main:Wpcontract}, for all $\nu,\mu \in \mathcal P_1(\R^{2d})$ and all $n\geqslant 1$,
\[\|\nu P^n - \mu P^n \|_{TV} \ \leqslant \ \frac{K_1K_2}{\delta^{3/2}}\po 1-\delta \kappa\pf^{(n-1)/2}  \W_1(\nu,\mu)\,,\]
with $K_1,\kappa$ and $K_2$ given in Theorem~\ref{thm_main:Wpcontract} and Proposition~\ref{prop_main:W/TV}. In particular, $r_*(P) \leqslant 1 - \delta \kappa/2$. 
\end{cor}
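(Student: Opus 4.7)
The plan is to combine the two preceding results by inserting one extra step $P$ between an initial Wasserstein contraction phase of length $n-1$ and a final regularization step that converts the $\mathcal W_1$ bound into a total variation bound.

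More concretely, I would first write $\nu P^n = (\nu P^{n-1})P$ and $\mu P^n = (\mu P^{n-1})P$, and apply Proposition~\ref{prop_main:W/TV} to the pair $\nu P^{n-1},\mu P^{n-1}$, which yields
\[\|\nu P^n - \mu P^n\|_{TV} \ \leqslant \ \frac{K_2}{\delta^{3/2}}\,\W_1\po \nu P^{n-1},\mu P^{n-1}\pf.\]
Then I would apply Theorem~\ref{thm_main:Wpcontract} with $p=1$ to the remaining $n-1$ iterations, giving
\[\W_1\po \nu P^{n-1},\mu P^{n-1}\pf \ \leqslant \ K_1 \po 1-\delta\kappa\pf^{(n-1)/2}\W_1(\nu,\mu).\]
Plugging this into the previous display delivers the announced inequality. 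Note that for $n=0$ the claim is vacuous in the form stated (since one genuinely needs one step of $P$ to invoke the smoothing bound), so the statement is understood for $n\geqslant 1$ as written.

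For the bound on $r_*(P)$, I would specialize to $\mu=\pi_\delta$, using the fact from Theorem~\ref{thm_main:Wpcontract} that $\pi_\delta$ exists and lies in $\mathcal P_1(\R^{2d})$; in particular $\W_1(\delta_z,\pi_\delta)<+\infty$ for every $z\in\R^{2d}$. Since $\pi_\delta P^n=\pi_\delta$, the main inequality gives
\[\|\delta_z P^n - \pi_\delta\|_{TV} \ \leqslant \ \frac{K_1K_2}{\delta^{3/2}}\po 1-\delta\kappa\pf^{(n-1)/2}\W_1(\delta_z,\pi_\delta).\]
Taking logarithms, dividing by $n$, and letting $n\to+\infty$ the polynomial prefactor and the $z$-dependent factor disappear, so that for every $z$,
\[\limsup_{n\to+\infty}\frac{\ln\|\delta_z P^n-\pi_\delta\|_{TV}}{n} \ \leqslant \ \tfrac12 \ln(1-\delta\kappa).\]
Exponentiating and taking the supremum over $z$ yields $r_*(P)\leqslant \sqrt{1-\delta\kappa}$, and the elementary inequality $\sqrt{1-x}\leqslant 1-x/2$ for $x\in[0,1]$ (which follows from squaring) gives the stated bound $r_*(P)\leqslant 1-\delta\kappa/2$.

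There is no genuine obstacle here: the corollary is a clean concatenation of the contraction result with the one-step smoothing estimate, the only minor point worth mentioning explicitly being the use of $\pi_\delta\in\mathcal P_1$ to ensure the finiteness of $\W_1(\delta_z,\pi_\delta)$ that is needed in order to take the $\limsup$ in $n$ for fixed $z$.
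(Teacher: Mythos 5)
Your argument is exactly the intended one: the paper labels the corollary as an ``immediate consequence'' of Theorem~\ref{thm_main:Wpcontract} and Proposition~\ref{prop_main:W/TV}, and your concatenation (contraction for $n-1$ steps, then one regularization step) together with the specialization to $\mu=\pi_\delta$ is precisely the chain of reasoning implied. The only extra care you add, checking $\W_1(\delta_z,\pi_\delta)<\infty$ via $\pi_\delta\in\mathcal P_1$, is correct and sensible, though the paper leaves it implicit.
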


The most classical settings to obtain bounds on the total variation convergence rate of a Markov chain is the combination of a Lyapunov (drift) and a local Doeblin (minoration) conditions. Nevertheless, the recent \cite{QinHobert} rigorously establishes that, in a variety of cases, no drift/minoration condition can yield a reasonable bound on $r_*(P)$ (here only one-step minoration conditions are concerned, of the form $\inf_{z\in\mathcal C}P(z,\cdot) \geqslant \varepsilon\nu$ for some probability $\nu$ and some $\varepsilon>0$).  More precisely, it is clear that the results of \cite{QinHobert} apply to the OBABO sampler (essentially adapting the proof of \cite[Proposition 15]{QinHobert} for the MALA sampler), which proves that it is impossible to prove simply with a drift/minoration condition that, if $\delta$ scales polynomially with the dimension, then $r_*(P)$ is bounded away from $1$ polynomially in   the dimension. In Corollary~\ref{Cor:r*P}, we have used the combination of a Wasserstein convergence and a Wasserstein/total variation regularization, called in \cite{QinHobert,RobertRosenthal,MadrasSezer} a one-shot coupling, which is indeed presented in \cite{QinHobert} as an alternative to drift/minoration arguments.

\bigskip

We finish this section with a concentration result implied by the Wasserstein contraction of Theorem~\ref{thm_main:Wpcontract}, \nv{or more precisely by the deterministic contraction at the core of the proof of this result (see Proposition~\ref{prop:contraction} below)}. We say that a probability distribution $\nu$ on $\R^{d}$ satisfies a logarithmic Sobolev (or simply log-Sobolev) inequality with constant $C$ if, for all bounded Lipschitz $\varphi$, 
\[\int_{\R^d} \varphi^2 \ln \po \varphi^2\pf \dd \nu - \po  \int_{\R^d} \varphi^2 \dd \nu \pf \ln\po  \int_{\R^d} \varphi^2 \dd \nu \pf   \ \leqslant \ C \int_{R^d} |\na \varphi|^2 \dd \nu\,. \]
Such inequality yields many useful information, in particular related to concentration of measure, see e.g. \cite{BakryGentilLedoux,LedouxConcentration}. As a consequence of the Brascamp-Lieb inequality or of the Bakry-Emery curvature criterion \cite{BakryGentilLedoux}, it is well known that, under the condition~\refconvex, the Gibbs measure $\pi$ satisfies such an inequality with constant $2\max(1,1/m)$. However, the invariant measure of $P$ is not explicit. Nevertheless, the positive Wasserstein curvature  (in the sense of \cite{Ollivier,Joulin}) obtained in Theorem~\ref{thm_main:Wpcontract} (or more precisely in Proposition~\ref{prop:contraction}) yields a discrete-time Bakry-Emery condition, which in turn yields the following.

\begin{thm}\label{thm_main:concentration}
Under the conditions of Theorem~\ref{thm_main:Wpcontract}, $\pi_\delta$ satisfies a log-Sobolev inequality with constant 
\[C_{lS} \ = \    \frac{6}{m} K_1^2\gamma^2 \po 1 + \delta + \delta^2   L/2\pf^2 \,.\]
Moreover, if $\mu_0 \in\mathcal P(\R^{2d})$ satisfies a log-Sobolev inequality with constant $C'>0$ and if $(Z_k)_{k\in \N}$ is an OBABO  chain with $Z_0\sim \mu_0$, then for all $1$-Lipshitz functions $\varphi$ on $\R^{2d}$, $n\geqslant 1$ and $u\geqslant 0$,
\[\mathbb{P}\po \left|\frac1n\sum_{k=1}^n   \phi(Z_k) - \pi_\delta(\phi)\right| \geqslant u +   \frac{2K_1}{n\kappa\delta}   \W_1(\mu_0, \pi_\delta) \pf \\ 
 \  \leqslant \  2  \exp \po - \   \frac{  n u^2 \kappa \delta }{4  C_{lS}+ 4K_1^2C'/(n\kappa\delta)}\pf\,.\]
\end{thm}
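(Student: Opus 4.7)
The plan is to split the proof into the two announced statements, both of which rely on the same stochastic representation of the OBABO chain as a Lipschitz pushforward of standard Gaussians, together with the deterministic synchronous-coupling contraction underlying Theorem~\ref{thm_main:Wpcontract} (i.e.\ Proposition~\ref{prop:contraction}).

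For the log-Sobolev inequality on $\pi_\delta$, I would first write one OBABO step as $Z_{1}=F(Z_{0},\xi)$ where $\xi=(G,G')$ is a standard Gaussian on $\R^{2d}$. Two ingredients control the regularity of $F$: (a) by Proposition~\ref{prop:contraction}, for each fixed $\xi$ the map $z\mapsto F(z,\xi)$ is a contraction of ratio $\rho=\sqrt{1-\delta\kappa}$ in a norm equivalent to the Euclidean norm up to $K_1$; (b) differentiating the OBABO equations with respect to $\xi$ (the $\na U(x_1)$ term yields a block of norm $\eta\sqrt{1-\eta^2}(1+\delta^2L/2)$) gives a uniform bound $L_0$ on $\|\partial_\xi F\|$ with $L_0^{2}\leqslant c\,\delta\gamma\,(1+\delta+\delta^2 L/2)^{2}$ for an explicit constant $c$. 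Iterating, $Z_n=\Psi_n(z,\xi_0,\dots,\xi_{n-1})$ is Lipschitz with constant $L_0\rho^{n-1-k}$ in $\xi_k$, and by tensorization of the Gaussian LSI (constant $1$) under the standard Lipschitz-pushforward rule, $P^{n}(z,\cdot)$ satisfies LSI with constant
\[\sum_{k=0}^{n-1} L_0^{2}\rho^{2(n-1-k)} \;=\; L_0^{2}\,\frac{1-\rho^{2n}}{1-\rho^{2}}\;\xrightarrow[n\to\infty]{}\;\frac{L_0^{2}}{\delta\kappa}.\]
Substituting $\kappa=m/(3\gamma)$ and the value of $L_0^{2}$ produces a bound of the form $(6/m)K_1^{2}\gamma^{2}(1+\delta+\delta^{2}L/2)^{2}$. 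Finally I would pass to the limit using the $\W_2$-convergence $P^{n}(z,\cdot)\to\pi_\delta$ granted by Theorem~\ref{thm_main:Wpcontract} together with lower semi-continuity of the log-Sobolev constant under $\W_2$-convergence with controlled second moments.

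For the concentration bound, I would use the same representation on the full trajectory: conditioning on $Z_0\sim\mu_0$ and independent Gaussian blocks $\xi_1,\dots,\xi_n$, each $Z_k=\Psi_k(Z_0,\xi_1,\dots,\xi_k)$, so that
\[\Phi(Z_0,\xi_1,\dots,\xi_n)\;:=\;\frac{1}{n}\sum_{k=1}^{n}\phi(Z_k)\]
is $\frac{1}{n}\sum_{k=1}^{n}\rho^{k}\leqslant \frac{1}{n(1-\rho)}$-Lipschitz in $Z_0$ and $\frac{L_0}{n(1-\rho)}$-Lipschitz in each $\xi_j$. Since $\mu_0$ satisfies LSI with constant $C'$ and each $\xi_j$ satisfies the standard Gaussian LSI, tensorization yields that the law of $\Phi$ satisfies LSI, and Herbst's argument gives sub-Gaussian concentration around $\E\Phi$ with variance proxy of order $\frac{1}{n^{2}(1-\rho)^{2}}\bigl(C'+nL_0^{2}\bigr)$; using $1-\rho\geqslant \delta\kappa/2$ and $L_0^{2}/(1-\rho^{2})\asymp C_{lS}$ recovers the denominator $4C_{lS}+4K_1^{2}C'/(n\kappa\delta)$ announced in the statement. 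To replace $\E\Phi$ by $\pi_\delta(\phi)$, I would use Theorem~\ref{thm_main:Wpcontract} together with $|\E\phi(Z_k)-\pi_\delta(\phi)|\leqslant \W_1(\mu_0 P^k,\pi_\delta)\leqslant K_1(1-\delta\kappa)^{k/2}\W_1(\mu_0,\pi_\delta)$, sum geometrically, and bound $1-\sqrt{1-\delta\kappa}\geqslant \delta\kappa/2$ to obtain the bias term $\frac{2K_1}{n\kappa\delta}\W_1(\mu_0,\pi_\delta)$ that is absorbed into the shift inside the probability.

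The main technical hurdles will be (i) tracking precise constants through the Lipschitz analysis of the noise-to-state map, specifically controlling the extra block induced by $\na U(x_1)$ in the velocity update so as to obtain the factor $(1+\delta+\delta^{2}L/2)^{2}$ without losing the gain of the contraction; and (ii) justifying the limit passage for the LSI, namely the lower semi-continuity of the optimal LSI constant under $\W_2$-convergence — this is classical, but needs verifying that moment bounds on $P^{n}(z,\cdot)$ are uniform in $n$, which follows from Theorem~\ref{thm_main:Wpcontract} since $\pi_\delta\in\mathcal P_p$ for all $p\geqslant 1$.
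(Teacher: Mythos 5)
Your proposal is correct in substance but takes a genuinely different route from the paper. The paper first establishes an abstract result (Theorem~\ref{thm:logSobgeneral}) for a Markov operator $R$ satisfying both a local log-Sobolev inequality and a gradient/operator subcommutation $|\na R\varphi|\leqslant r\,R|\na\varphi|$, and proves it by the ``semigroup tensorization'' decomposition $\mu R(f^2\ln f^2)-\mu R(f^2)\ln\mu R(f^2)=\mu\bigl(R(f^2\ln f^2)-R(f^2)\ln R(f^2)\bigr)+\bigl(\mu(g^2\ln g^2)-\mu(g^2)\ln\mu(g^2)\bigr)$ with $g=\sqrt{Rf^2}$, and for the concentration part by a sequential Chernoff/Markov-property peeling where $\phi+R\phi+\dots$ is shown to be $\tfrac{1}{1-r}$-Lipschitz. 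It then instantiates this on the conjugated chain $P_{a,b}=S\circ P\circ S^{-1}$ (whose one-step kernel is a Lipschitz image of a Gaussian, Lemma~\ref{lem:PablogSob}, and whose gradient subcommutation follows from the pathwise contraction via Kuwada's theorem). You instead work directly on the path space: unroll $Z_n$ as a map of $(Z_0,\xi_1,\dots,\xi_n)$ that is $\rho^{\,n-k}L_0$-Lipschitz in $\xi_k$, tensorize the Gaussian log-Sobolev inequality, push forward, and pass to the limit; and for concentration you treat $\frac1n\sum\phi(Z_k)$ as a Lipschitz functional on $\R^{2d}\times(\R^{2d})^n$ and run Herbst on the product measure $\mu_0\otimes\gamma^{\otimes n}$. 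The two proofs rest on the same two ingredients (the a.s.\ synchronous-coupling contraction of Proposition~\ref{prop:contraction} and the Lipschitz-in-noise structure of one OBABO step), and lead to the same geometric series, but yours is pathwise while the paper's is operator-theoretic and packaged for reuse. A few points you should firm up: (i) the standard Gaussian satisfies the log-Sobolev inequality with constant $2$, not $1$, in the paper's normalization; (ii) the contraction holds in $\|\cdot\|_{a,b}$ while $\phi$ is Euclidean-Lipschitz and the noise lives in the Euclidean metric, so the $|S|$ and $|S^{-1}|$ conversions (which produce the announced $K_1^2$) must be tracked consistently; (iii) tensorizing an LSI-$C'$ factor with Gaussian LSI-$2$ factors requires a coordinate rescaling of the $Z_0$ block, otherwise plain tensorization only yields $\max(C',2)$ and not the claimed denominator; and (iv) the limit passage is easier than you suggest: you only need weak convergence of $P^n(z,\cdot)$ to $\pi_\delta$, since both $f^2\ln f^2$ and $|\na f|^2$ are bounded continuous for bounded Lipschitz $f$ (which is the test class in the definition), so invoking $\W_2$ lower semi-continuity is unnecessary overhead.
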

\begin{rmq}\label{rmq:initialcondition}
If $\mu_0$ is a Dirac measure, it satisfies a log-Sobolev inequality with constant $C'=0$. In any cases, we can bound $\W_1(\mu_0, \pi_\delta)$ by $\W_1(\mu_0, \pi)+ \W_1(\pi, \pi_\delta)$, and then it is possible to chose  $\mu_0 = \delta_{(\tilde x_\star,0)}$ or $\delta_{\tilde x_\star}\otimes \mathcal N(0,I_d)$ (which satisfies a log-Sobolev inequality with constant $2$), where $\tilde x_\star$ is an approximation of $x_\star$ obtained through  a deterministic optimization scheme. As can be seen for instance in Lemma~\ref{lem:moments}, this  yields an initial distance $\W_1(\pi,\mu_0)$ of order $\sqrt d + |x_\star-\tilde x_\star|$. The remaining term $\W_1(\pi, \pi_\delta)$ (that also controls $|\pi_\delta(\varphi)-\pi(\varphi)|$) is studied in the next section.
\end{rmq}

\begin{rmq}
In fact, a $\W_1$ contraction (which does not necessarily implies a log-Sobolev inequality) is sufficient to get a similar result, see \cite[Corollary 2.6]{Djellout} and \cite{Joulin}.

\end{rmq}

\subsection{Equilibrium bias and efficiency}\label{Sec:mainresBias}

The invariant measure $\pi_\delta$ of $P$ is not explicit. Since the OBABO sampler converges to the Langevin diffusion as $\delta$ vanishes, we expect $\pi_\delta$ to converge to $\pi$. There are many classical ways to quantify this; in view of the previous long-time convergence results, in this work we are naturally interested by an estimation of the Wasserstein and total variation distances  between $\pi_\delta$ and $\pi$. 
 
 Recall the definition of the Verlet map $\Phi_V$ in Section~\ref{Sec:MOBABO}, and consider $P_V$ the corresponding transition operator, i.e. $P_V\varphi(x,v) = \varphi(\Phi_V(x,v))$ (equivalently, $\nu P_V = \Phi_V \sharp \nu$). From the contraction of Wasserstein distances and the Wasserstein/total variation distance regularization property, it is not difficult to get the following.

\begin{prop}\label{prop_main:nupi1}
Under the conditions of Theorem~\ref{thm_main:Wpcontract}, for all $p\geqslant 1$,
\[\mathcal W_{p}\po \pi_\delta ,\pi\pf \ \leqslant \ \frac{2 K_1}{\delta \kappa} \mathcal W_{p}\po \pi   ,\pi P_V\pf\]
and for all $n\in\N_*$, 
 \begin{eqnarray*}
 \|\pi_\delta - \pi\|_{TV} 
 & \leqslant &  \frac{ K_1 K_2}{\delta^{3/2}} (1-\delta\kappa)^{(n-1)/2} \W_1(\pi_\delta , \pi )+ n\|\pi  P_V - \pi  \|_{TV} 
 \end{eqnarray*}
 where $K_2$ is given in Proposition~\ref{prop_main:W/TV}.
\end{prop}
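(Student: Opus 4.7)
The plan is to exploit that the OBABO transition decomposes as $P = P_O P_V P_O$ (where $P_V$ is the deterministic Verlet transition corresponding to the BAB part) and that $\pi$ is invariant under $P_O$, since $P_O$ only acts on velocities as a standard OU step preserving the Gaussian marginal $\mathcal N(0,I_d)$. A key preliminary observation is that $P_O$ is non-expansive in $\mathcal W_p$ for every $p \geqslant 1$: using the parallel coupling (same Gaussian noise), two copies of $P_O$ map $(x,v)$ and $(x',v')$ to states whose separation is $(x-x', \eta(v-v'))$ with $\eta \in (0,1)$, hence bounded by $|(x-x', v-v')|$. Since $\pi = \pi P_O$, this gives the crucial bound
\[\W_p(\pi P, \pi) \ = \ \W_p(\pi P_V P_O, \pi P_O) \ \leqslant \ \W_p(\pi P_V, \pi)\,.\]

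\textbf{Wasserstein bound.} I use $\pi_\delta = \pi_\delta P^n$ together with Theorem~\ref{thm_main:Wpcontract} to get, for any $n \geqslant 1$,
\[\W_p(\pi_\delta, \pi) \ \leqslant \ \W_p(\pi_\delta P^n, \pi P^n) + \W_p(\pi P^n, \pi) \ \leqslant \ K_1(1-\delta\kappa)^{n/2} \W_p(\pi_\delta, \pi) + \W_p(\pi P^n, \pi)\,.\]
The second term is telescoped: $\W_p(\pi P^n, \pi) \leqslant \sum_{k=0}^{n-1} \W_p(\pi P^{k+1}, \pi P^k)$, and each summand is controlled by Theorem~\ref{thm_main:Wpcontract} applied to $\pi P$ and $\pi$, yielding $K_1 (1-\delta\kappa)^{k/2} \W_p(\pi P, \pi)$. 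Summing the geometric series and using $1 - \sqrt{1-x} \geqslant x/2$ for $x \in [0,1]$ gives $\W_p(\pi P^n, \pi) \leqslant \tfrac{2K_1}{\delta\kappa} \W_p(\pi P, \pi)$. Letting $n \to \infty$ (so that the first term vanishes, using that $\pi_\delta \in \mathcal P_p$ so $\W_p(\pi_\delta, \pi)$ is finite) and combining with the non-expansiveness bound $\W_p(\pi P, \pi) \leqslant \W_p(\pi P_V, \pi)$ yields the claimed inequality.

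\textbf{Total variation bound.} The triangle inequality together with the invariance $\pi_\delta P^n = \pi_\delta$ gives
\[\|\pi_\delta - \pi\|_{TV} \ \leqslant \ \|\pi_\delta P^n - \pi P^n\|_{TV} + \|\pi P^n - \pi\|_{TV}\,.\]
The first term is controlled directly by Corollary~\ref{Cor:r*P} applied to $(\pi_\delta, \pi)$. For the second term, another telescoping gives $\|\pi P^n - \pi\|_{TV} \leqslant \sum_{k=0}^{n-1} \|\pi P^{k+1} - \pi P^k\|_{TV}$, and since total variation is non-increasing under any Markov operator, each summand is at most $\|\pi P - \pi\|_{TV}$, producing the factor $n$. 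Finally, since TV is also non-increasing under $P_O$ and $\pi = \pi P_O$, one has $\|\pi P - \pi\|_{TV} = \|\pi P_V P_O - \pi P_O\|_{TV} \leqslant \|\pi P_V - \pi\|_{TV}$, giving the stated bound.

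The only genuinely delicate point is the verification that $P_O$ is non-expansive in $\mathcal W_p$ (immediate via the parallel coupling) and the geometric-series manipulation with the factor $1 - (1-\delta\kappa)^{1/2}$; everything else is a routine combination of the triangle inequality with the contraction estimate of Theorem~\ref{thm_main:Wpcontract} and the regularization estimate of Proposition~\ref{prop_main:W/TV}. The structural reason the bound takes this form is precisely that the OBABO scheme sandwiches the deterministic, $\pi$-non-preserving Verlet map $P_V$ between two $\pi$-preserving half-refreshments, so the entire equilibrium bias is driven by how much $P_V$ displaces $\pi$.
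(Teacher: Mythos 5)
Your proof is correct and follows essentially the same route as the paper: exploit the decomposition $P = P_O P_V P_O$ together with $\pi P_O = \pi$ and the nonexpansiveness of $P_O$ to reduce the bias to the one-step Verlet displacement, then bootstrap from the contraction of Theorem~\ref{thm_main:Wpcontract} (for $\W_p$) and from Corollary~\ref{Cor:r*P} (for total variation). The only cosmetic difference is in the Wasserstein step: the paper applies the one-step contraction in the adapted norm $\|\cdot\|_{a,b}$ (which carries no $K_1$ prefactor), rearranges $\W_{p,a,b}(\pi_\delta,\pi)\leqslant (1-\delta\kappa/2)\W_{p,a,b}(\pi_\delta,\pi)+\W_{p,a,b}(\pi P,\pi)$ directly, and converts to the Euclidean $\W_p$ only at the end, whereas you telescope $\W_p(\pi P^n,\pi)$ in the Euclidean metric using the $n$-step bound, sum the geometric series, and let $n\to\infty$ — both routes produce the same constant $2K_1/(\delta\kappa)$.
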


This is proven in Section~\ref{Sec:CoupleVerlet}.

  \begin{rmq}
  Using that $(x',v')=\Phi_V(x,v)$ is a unitary change of coordinates (see \cite[Problem 1.5]{Tuckerman}) and that it is reversible up to a reflection of the velocity (which preserves the Hamiltonian $H$) we see that
  \[\int_{\R^{2d}} \varphi\po \Phi_V(x,v)\pf e^{-H(x,v)}\dd x \dd v \ = \ \int_{\R^{2d}} \varphi(x,v) e^{-H\po \Phi_V(x,-v)\pf}\dd x \dd v\,.\]
  In other words, $\pi P_V$ is the probability law with density proportional to $\exp(-H\circ \Phi_V \circ \Phi_R)$.
  \end{rmq}

This leads us to the study of the Verlet step. In order to control the distance from $\pi P_V$ to $\pi$, a natural way is to consider a Markov operator $Q$ that leaves $\pi$ invariant and to control the distance between $\pi P_V$ and $\pi Q = \pi$ by constructing a coupling of a transition of $P_V$ with a transition of $Q$, starting from the same initial condition distributed according to $\pi$.  If $Q$ is close to $P_V$ then we will be able to do so while keeping the two chains close.  Since $P_V$ has been obtained as the discretization of the Hamiltonian dynamics, a   natural candidate for $Q$  is $Q_\delta$ where $(Q_t)_{t\geqslant 0}$ is the transition semi-group of this deterministic flow, which indeed leaves $\pi$ invariant. Comparing the discrete-time Markov chain with its continuous-time limit is indeed the main argument in similar works such as \cite{Chatterji1,Chatterji2,durmus2019,dalalyan1}. However, we remark that another candidate is given by the Metropolis-adjusted Verlet transition  $P_{MV} = P_R P_{MH}$ introduced in Section~\ref{Sec:MOBABO}.  Indeed, $\pi$ is invariant for $P_{MV}$, and $P_V$ and $P_{MV}$ only differ by an accept/reject step. Comparing $P_V$ with $Q_\delta$ or with $P_{MV}$ leads to two different kind of results. On the one hand, $P_V$ and $Q_\delta$ are deterministic, so there is no real coupling here, only a deterministic numerical error, in particular it behaves well with $\W_p$ for all $p\geqslant 1$. On the other hand, it is not suitable to bound the total variation distance since the probably that the deterministic flow and the numerical integrator are equal after one step is in general zero. On the contrary, by design, a transition of $P_V$ and $P_{MV}$ gives exactly the same result provided the step is accepted in $P_{MV}$, which enables to get information on the total variation distance. Nevertheless, in case of rejection, because of the velocity reflection, although the initial condition is the same for the two chains, the distance instantaneously get large. This makes the comparison with $P_{MV}$ less and less efficient for bounding $\W_p$ as $p$ increases. In a word, when comparing $P_V$ with $Q_\delta$, we get two points that are very close (but distinct) with probability $1$ while, when comparing $P_V$ with $P_{MV}$, we get two points that are equal with large probability but distant otherwise.

\medskip

This analysis is conducted in Section~\ref{Sec:CoupleVerlet}. We now present its results.

\medskip

A first remark is that the main difference between the OBABO sampler and the Markov chain studied in \cite{Chatterji1} is that the former  is based on a second-order integrator, at least formally. Nevertheless, a Taylor expansion of the Verlet algorithm to an order larger than one requires informations on derivatives of $U$ higher than two. As a consequence, relying only on Assumption~\refsmooth, we can only get results similar to \cite{Chatterji1} for the Wasserstein distances. 

\begin{prop}\label{prop_main:erreur1}
 Under the conditions of Theorem~\ref{thm_main:Wpcontract}, for all $p\geqslant 1$,
\begin{eqnarray*}
\W_p\po \pi,\pi_\delta\pf  & \leqslant & \delta  \sqrt{d+(p-2)_+} \frac{2 K_1K_3}{ \kappa} \\
\|\pi - \pi_\delta\|_{TV} & \leqslant & \delta d \err{\po 2+   |\ln \po  \delta^{3} d\pf|   \pf } \po  \frac{2 K_1^2 K_2K_3}{ \kappa\sqrt{1-\delta \kappa}} + \frac{K_4}{\kappa}  \pf 
\end{eqnarray*}
where $K_2,K_3,K_4$ are given in Propositions~\ref{prop_main:W/TV} and \ref{prop:WppiPV} \nv{(in particular $K_3$ and $K_4$ converge to $L$ as $\delta$ vanishes)}.
\end{prop}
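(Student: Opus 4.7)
The plan is to combine Proposition~\ref{prop_main:nupi1}, which already reduces the problem to bounding the one-step deviation of the Verlet kernel $P_V$ from measures leaving $\pi$ invariant, with two distinct one-step estimates based on two different auxiliary couplings. For $\mathcal{W}_p(\pi, \pi P_V)$ the natural reference is the continuous Hamiltonian flow (denoted $(Q_t)_{t\geqslant 0}$ above), which preserves $\pi$ so that $\pi = \pi Q_\delta$. For $\|\pi P_V - \pi\|_{TV}$ the natural reference is instead the Metropolis-adjusted Verlet kernel $P_{MV}$ from Section~\ref{Sec:MOBABO}, which also preserves $\pi$ and coincides with $P_V$ whenever the Metropolis step accepts.

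For the Wasserstein estimate, I would couple by drawing $(X_0,V_0)\sim\pi$ and applying both $\Phi_V$ and the Hamiltonian flow $Q_\delta$ to this common initial point. The velocity component of Verlet is the trapezoidal rule applied to $u\mapsto \na U(X(u))$; under Assumption~\refsmooth\ alone, this integrand is Lipschitz with constant $L\sup_{[0,\delta]}|\dot X|$, so the trapezoidal error scales as $\delta^2 L|v_0|$ plus subdominant terms involving $|\na U(x_0)|$ (the position error, controlled by integrating the Lipschitz bound twice, is of order $\delta^3$ and therefore negligible). Taking $L^p(\pi)$ norms and using that the velocity marginal of $\pi$ is standard Gaussian, so that $(\mathbb{E}_\pi|V_0|^p)^{1/p}\lesssim \sqrt{d+(p-2)_+}$, produces a bound $\mathcal{W}_p(\pi,\pi P_V)\lesssim \delta^2 K_3 \sqrt{d+(p-2)_+}$ with $K_3\to L$ as $\delta\to 0$. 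Combined with the $1/\delta$ factor in Proposition~\ref{prop_main:nupi1}, this yields the announced Wasserstein estimate.

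For the total variation estimate, I would couple a transition of $P_V$ and one of $P_{MV}$ using the same initial state $(X_0,V_0)\sim\pi$ and the same Metropolis uniform variable. The two coincide except in case of Metropolis rejection (and even then, only the velocity sign differs, which contributes nothing to the marginal TV distance since the velocity marginal of both laws is still standard Gaussian). Hence
\[\|\pi P_V - \pi\|_{TV} \ \leqslant \ 2\mathbb{E}_\pi\co 1-\exp\po -\po H\circ \Phi_V - H\pf_+\pf\cf \ \leqslant \ 2\mathbb{E}_\pi\co \po H\circ \Phi_V - H\pf_+\cf\,.\]
Under Assumption~\refsmooth, the one-step energy drift of Verlet can be bounded by $\delta^2 L(|v_0|^2 + |\na U(x_0)|^2)$ up to lower-order terms; using $\mathbb{E}_\pi|V_0|^2 = d$ together with the standard inequality $\mathbb{E}_\pi|\na U(X_0)|^2 \leqslant Ld$ yields $\|\pi P_V - \pi\|_{TV}\lesssim \delta^2 K_4 d$. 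Together these two estimates form the content of Proposition~\ref{prop:WppiPV}, referenced in the statement.

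Inserting these into Proposition~\ref{prop_main:nupi1} is immediate for the Wasserstein part. For the total variation part the free integer $n$ has to be chosen to balance the geometric factor $(1-\delta\kappa)^{(n-1)/2}$ against $n\delta^2 K_4 d$, which calls for $n$ of order $|\ln(\delta^3 d)|/(\delta\kappa)$ and produces the logarithmic correction in the final bound. The main technical obstacle will be the careful bookkeeping of the Verlet local error under only \refsmooth\ (so that trapezoidal-type estimates rely purely on the Lipschitz modulus of $\na U\circ X$, without any help from a second derivative), and the explicit tracking of the $d$-dependence through the moments of $|V_0|$ and $|\na U(X_0)|$ under $\pi$, which ultimately drive the $\sqrt d$ Wasserstein and linear-$d$ total variation scalings.
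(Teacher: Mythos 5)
Your proposal follows the paper's route exactly: reduce via Proposition~\ref{prop_main:nupi1} to a one-step bound on $\pi P_V$ versus $\pi$, compare $P_V$ to the Hamiltonian flow $Q_\delta$ for the Wasserstein distance (giving $O(\delta^2 Z)$ per step by a Gronwall/trapezoidal-type argument under \refsmooth\ alone), compare $P_V$ to the Metropolis-adjusted $P_{MV}$ for total variation (bounding $\|\pi P_V-\pi\|_{TV}$ by twice the expected rejection probability), and then pick $n\sim |\ln(\delta^3 d)|/(\delta\kappa)$ to balance the geometric contraction factor against the accumulated $n\delta^2 d$ error. This is precisely the chain Lemma~\ref{lem:piPV}, Lemma~\ref{lem:alpha}, Lemma~\ref{lem:VerletDiscretization}, Lemma~\ref{lem:moments} $\Rightarrow$ Proposition~\ref{prop:WppiPV} $\Rightarrow$ Proposition~\ref{prop_main:erreur1}.

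One small point of confusion to flag: the parenthetical ``even then, only the velocity sign differs, which contributes nothing to the marginal TV distance since the velocity marginal of both laws is still standard Gaussian'' is incorrect as stated. Upon rejection, $P_{MV}$ returns $(x_0,-v_0)$ while $P_V$ returns $\Phi_V(x_0,v_0)$, so both the position and the velocity components differ, and total variation is of the joint law, not the marginals. Fortunately this aside is not used: the displayed inequality $\|\pi P_V-\pi\|_{TV}\leqslant 2\,\mathbb{E}_\pi[1-\alpha]$ needs only that the two kernels coincide on acceptance, which you state correctly. Likewise, your invocation of $\mathbb E_\pi|\nabla U|^2\leqslant Ld$ by integration by parts is a slightly sharper moment bound than the $L^2 d/m$ from Lemma~\ref{lem:moments} that the paper uses, so your constants would differ slightly from the stated $K_4$, but the orders in $\delta$ and $d$ are the same.
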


Proposition~\ref{prop_main:erreur1} is a corollary of Propositions~\ref{prop_main:nupi1} and \ref{prop:WppiPV}, as proven at the end of Section~\ref{Sec:CoupleVerlet}.

\medskip

In order to see the benefits of the second-order approximation, some conditions on higher derivatives of $U$ have to be enforced. Ideally, we would like conditions that are general, simple, and under which the OBABO sampler proves to be efficient. Of course these three criteria are contradictory and, as a compromise, we study several conditions to get a partial picture of the behavior of the algorithm. We could state a technical ad hoc condition containing exactly what is used in the proof, but then such a condition would have to be checked on particular cases. We will rather focus on two simple and general conditions, which provides a simple proof that there exist situations for which the bias of the chain is of second order in $\delta$ with possibly a nice dependence in the dimension.

 The simplest condition one can think of, considered in other works like \cite{durmus2019,dalalyan1,Chatterji3}, is that the Hessian of $U$ is Lipschitz:
\[\exists L_2>0\ \text{ such that }\ \forall x,y\in\R^d\,,\qquad  |\na^2 U(y) -\na^2 U(x)| \ \leqslant \ L_2 |x-y|\,.\]
 \nv{In fact, we will work under a possibly weaker generalization of this condition:} 

\begin{assu*}[$\mathbf{\na^2 pol(}\ell\mathbf{)}$]
There exist $\ell\geqslant 2$, $L_\ell>0$ and $x_\star \in\R^d$ such that, for all $x,y\in\R^d$, 
\begin{equation}\label{eq:ordren}
|\na^2 U(y) - \na^2 U(x) | \  \leqslant \ L_\ell |x-y| \po 2+ |x-x_\star|^{\ell-2}+|y-x_\star|^{\ell-2}\pf/4\,.
\end{equation}
\end{assu*}
Note that, up to a change of the constant $L_\ell$, if Assumption~\refordren\ holds for some $x_\star$ then it holds with any other value of $x_\star$. As a consequence, implicitly, whenever Assumptions~\refconvex\ and \refordren\ are simultaneously satisfied, then we chose $x_\star$ in \refordren\ to be the global minimum of $U$ given by \refconvex.

As can be seen through a Taylor expansion, \refordren\ holds in particular if $\|\na^{(k)} U\|_\infty <+\infty$ for some $k\geqslant 3$. 

As we will see, we still get a second-order error in $\delta$ under Assumption~\refordren\ for any $\ell\geqslant 2$. Nevertheless, for $\ell>2$, due to the presence of additional moments, the dependence in the dimension worsen. However, as illustrated by the next condition, there are cases where, even though the local Lipschitz constant of $\na^2 U$ is polynomial, the OBABO sampler is super efficient.

\begin{assu*}[$\mathbf{\indep}$]
There exist $\ell\in \N$, $L_\ell>0$, $x_\star = (x_{\star,1},\dots,x_{\star,d})\in\R^d$, an orthogonal matrix $\mathcal Q\in\mathcal O(d)$ and, for $i\in\cco 1,d\ccf$, potentials $U_i\in\mathcal C^2 (\R)$  such that $U(\mathcal Q x) = \sum_{i=1}^d U_i(x_i)$ and for all $ i\in\cco 1,d\ccf$ and $x,y\in\R$,
\[ |U_i''(y)-U_i''(x)| \ \leqslant \ L_\ell |x-y|\po 2+ |x-x_{\star,i}|^{\ell-2}+|y-x_{\star,i}|^{\ell-2}\pf/4\,.\]
\end{assu*}

Similarly to Assumption~\refordren, when Assumptions~\refind\ and \refconvex\ are both satisfied, $x_\star$ is implicitly the minimum of $U$. Assumption~\refind\ means that the target distribution is separable, namely, in a suitable orthonormal basis, the coordinates of a random variable distributed accorded to $\pi$ are independent. It is important to note that we do not need to know $\mathcal Q$. Indeed, contrary to e.g. the Zig-Zag sampler \cite{BierkensFearnheadRoberts} or more generally Gibbs algorithms,  in the OBABO sampler, the basis does not play a particular role, i.e. if $(x_n,v_n)_{n\in \N}$ is an OBABO chain for some potential $U$ then  $(\mathcal Q^{-1}x_n,\mathcal Q^{-1}v_n)_{n\in \N}$ is an OBABO chain with potential $x\mapsto U(\mathcal Q x)$. As a particular case, Assumption~\refind\ is satisfied for all Gaussian distributions (with $\ell=2$ and $L_\ell=0$).

Although the independence assumption is very restrictive, it is the classical condition under which scaling limits are established for MCMC algorithms, see e.g. \cite{DoucetHMC} and references within. Non-asymptotic bounds for the HMC algorithm for a separable target have been established in \cite{Mangoubi}, and under a similar but weaker condition in \cite{Mangoubi2}.

\begin{prop}\label{prop_main:biais}
Under the conditions of Theorem~\ref{thm_main:Wpcontract}, and considering $K_5,K_6,K_7,K_8$ as given by Proposition~\ref{prop:WppiPV}, \nv{(in particular
\[K_5 \rightarrow  \frac{L}{4\sqrt m}  
   +   \frac{L }{6} \,,\quad K_6 \rightarrow  2^{\ell-2}  \po 1 + \frac{1}{m^{\ell/2}}\pf\,,
   \quad K_7 \rightarrow  L\,,
   \quad K_8 \rightarrow  2^{\ell+1}  \po 1 + \frac{1}{m^{(\ell+1)/2}}\pf \]
as $\delta \rightarrow 0$), the following holds:
}

\begin{enumerate}

\item If Assumption~\refordren\ holds then, for all $p\geqslant 1$,
 \begin{eqnarray*}
 \mathcal W_{p}\po \pi_\delta ,\pi\pf & \leqslant & \delta^2\po d + \ell p-2\pf^{\ell/2}\frac{2 K_1 \po K_5   + L_\ell K_6 \pf}{ \kappa}   
 \\
 \|\pi_\delta - \pi\|_{TV} 
 & \leqslant & \delta^2 \err{\po 2+ |\ln\po  \delta^{3}(d+\ell-1)\pf| \pf} \po d + \ell -1\pf^{(\ell+1)/2}  \\
 & & \ \times \    \frac{2 K_1^2K_2 \po K_5   + L_\ell K_6 \pf + K_7   + L_\ell K_8  }{ \kappa} \,.
\end{eqnarray*}

\item If Assumption~\refind\ holds then, for all $p\geqslant 2$,
 \begin{eqnarray*}
 \mathcal W_{p}\po \pi_\delta ,\pi\pf & \leqslant & \delta^2 \sqrt{d}\frac{2 K_1 \po K_5   + L_\ell K_6 \pf \po  \ell p-1\pf^{\ell/2}}{ \kappa}  
 \\
 \|\pi_\delta - \pi\|_{TV}  & \leqslant &  \delta^2 d \err{\po 2+ | \ln\po    \delta^{3}\ell\pf| \pf} \ell  ^{(\ell+1)/2}     \frac{2 K_1^2K_2 \po K_5   + L_\ell K_6 \pf + K_7   + L_\ell K_8  }{ \kappa} \,.
\end{eqnarray*}
\end{enumerate}
\end{prop}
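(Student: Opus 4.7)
The plan is to invoke Proposition~\ref{prop_main:nupi1}, which reduces both estimates to controlling a single Verlet step at equilibrium, namely the Wasserstein distance $\W_p(\pi, \pi P_V)$ and the total variation distance $\|\pi P_V - \pi\|_{TV}$. These one-step bounds constitute the content of the intermediate Proposition~\ref{prop:WppiPV} and provide the constants $K_5,K_6,K_7,K_8$. Once they are in hand, the Wasserstein conclusion follows by immediate substitution, whereas the total variation conclusion requires an appropriate optimization over the free parameter $n$ in Proposition~\ref{prop_main:nupi1}.

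For the one-step Wasserstein error, the natural benchmark is the exact Hamiltonian flow $Q_\delta$, which preserves $\pi$. Using the obvious coupling starting from a common $(x,v)\sim\pi$,
\[ \W_p^p(\pi P_V, \pi) \ \leqslant \ \E_\pi \co |\Phi_V(x,v) - \Phi_\delta^{\mathrm{Ham}}(x,v)|^p \cf . \]
A Taylor expansion of both maps to order three in $\delta$ cancels the zeroth-, first- and second-order contributions, since Verlet is a second-order symplectic integrator, leaving a pointwise residue of order $\delta^3$ whose prefactor, under \refordren, is of the form $K_5 + L_\ell K_6 (|x-x_\star|^{\ell-2}+|v|^{\ell-2})$. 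Taking $L^p(\pi)$ norms and invoking the moment bound $\E_\pi[|x-x_\star|^q]^{1/q}\lesssim \sqrt{q+d}$ that follows from log-concavity of $\pi$ (together with the analogous estimate for $|v|$ under the Gaussian marginal) then yields $\W_p(\pi P_V,\pi)\lesssim \delta^3 (d+\ell p-2)^{\ell/2}(K_5+L_\ell K_6)$.

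For the one-step total variation error, the key observation is that the Metropolis-adjusted Verlet map $P_{MV}=P_R P_{MH}$ preserves $\pi$ and that, under the natural coupling, $P_V$ and $P_{MV}$ produce identical outputs whenever the Metropolis step accepts. Consequently,
\[ \|\pi P_V - \pi\|_{TV} \ = \ \|\pi P_V - \pi P_{MV}\|_{TV} \ \leqslant \ 2\E_\pi\co 1-\alpha(x,v)\cf, \]
with $\alpha=\exp(-[\Delta H]_+)$ and $\Delta H=H\circ\Phi_V - H$. Since Verlet is second-order symplectic, $|\Delta H|=O(\delta^3)$ with an analogous polynomial prefactor under \refordren, producing $\|\pi P_V-\pi\|_{TV}\lesssim \delta^3 (d+\ell-1)^{(\ell+1)/2}(K_7+L_\ell K_8)$. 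Plugging the just-obtained Wasserstein estimate for $\W_1(\pi_\delta,\pi)$ and this one-step TV estimate into Proposition~\ref{prop_main:nupi1}, and choosing $n \approx 1 + |\ln(\delta^3(d+\ell-1))|/(\delta\kappa)$ so that $\delta^{-3/2}(1-\delta\kappa)^{(n-1)/2}\W_1(\pi_\delta,\pi)$ becomes comparable to the second term, produces the logarithmic factor in the announced total variation bound.

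In the separable case \refind, after the orthogonal change of variables $\mathcal Q$ the Verlet and Hamiltonian maps factorize as tensor products of $d$ independent one-dimensional flows acting on independent marginals of $\pi$. The squared phase-space error $|\Phi_V-\Phi_\delta^{\mathrm{Ham}}|^2=\sum_{i=1}^d |E_i|^2$ is then a sum of $d$ independent positive contributions, so $\E_\pi \po \sum_i |E_i|^2 \pf^{p/2}$ only grows as $d^{p/2}$ times a one-dimensional $L^p$-moment, replacing the $(d+\ell p-2)^{\ell/2}$ dimension factor by $\sqrt d \cdot (\ell p-1)^{\ell/2}$. The energy error $\Delta H$ is also a sum of $d$ independent one-dimensional contributions of size $\delta^3$, but $1-e^{-[\cdot]_+}$ is not centered, its expectation grows linearly in $d$, and this is why the TV bound retains the full dimensional factor $d$. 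The main obstacle will be the careful third-order Taylor expansion of Verlet: one has to identify which $\delta^3$ terms survive the second-order cancellation and dominate each by the prescribed polynomial of $|x-x_\star|$ and $|v|$ under \refordren, both for the phase-space error $\Phi_V-\Phi_\delta^{\mathrm{Ham}}$ and for the energy error $\Delta H$, while keeping the one-dimensional versions of these estimates sharp enough that the separable argument delivers the improved $\sqrt d$ scaling.
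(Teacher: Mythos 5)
Your plan for Part 1 coincides with the paper's: reduce to the one-step quantities $\W_p(\pi,\pi P_V)$ and $\|\pi P_V-\pi\|_{TV}$ via Proposition~\ref{prop_main:nupi1}, bound these by coupling $P_V$ respectively with the exact Hamiltonian flow $Q_\delta$ and with the Metropolis-adjusted Verlet step $P_{MV}$ (Lemma~\ref{lem:piPV}, together with Lemma~\ref{lem:VerletDiscretization} for $|\Phi_V-\Phi_{HD}|$ and Lemma~\ref{lem:alpha} for $1-\alpha$), take moments of the resulting polynomials in $|x-x_\star|$ and $|v|$ under $\pi$ (Lemma~\ref{lem:moments}, giving Proposition~\ref{prop:WppiPV}), and finally pick $n\asymp \delta^{-1}\kappa^{-1}|\ln(\delta^{3}(d+\ell-1))|$ to balance the two terms in the TV bound. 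That is exactly the paper's proof.

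For Part 2 you take a genuinely different route, and it costs you a logarithm. You tensorize the \emph{one-step} error (noting that $\Phi_V-\Phi_{HD}$ and $\Delta H$ decompose coordinatewise after the change of basis, so Minkowski gives $\W_p(\pi,\pi P_V)\lesssim\delta^3\sqrt d\,(\ell p-1)^{\ell/2}$ and $\|\pi P_V-\pi\|_{TV}\lesssim\delta^3 d\,\ell^{(\ell+1)/2}$), then apply Proposition~\ref{prop_main:nupi1} in dimension $d$. The Wasserstein bound you obtain is the stated one. But in the TV estimate of Proposition~\ref{prop_main:nupi1} the free parameter $n$ has to dominate the transient term $\delta^{-3/2}(1-\delta\kappa)^{(n-1)/2}\W_1(\pi_\delta,\pi)$ with $\W_1(\pi_\delta,\pi)\sim\delta^2\sqrt d$, which forces $n\gtrsim\delta^{-1}\kappa^{-1}\ln(\delta^{-1}\sqrt d)$; the resulting bound is $\delta^2 d\,|\ln(\delta^3 d)|$, not the claimed $\delta^2 d\,|\ln(\delta^3\ell)|$. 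The paper instead exploits that $\pi_\delta$ itself factorizes over the coordinates (the OBABO chain decouples into $d$ independent one-dimensional OBABO chains after the orthonormal change of variables), applies the \emph{whole} of Part 1 in dimension one to each pair $(\nu_{i,\delta},\nu_i)$ — where the choice of $n$ involves only a dimension-free logarithm — and then sums with Lemma~\ref{lem:tensor}. That is how the $d$ is kept outside the log. So your route is sound and reaches the same $\delta$- and leading $d$-scalings, but to get the sharper stated bound you should tensorize the stationary measures rather than the one-step errors and then optimize $n$ coordinatewise.

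A minor point: under Assumption~\refsmooth\ alone, $\Delta H$ is only $O(\delta^2)$ (the $\delta^2 L Z^2$ term in Lemma~\ref{lem:alpha}), and the $O(\delta^3)$ you cite is precisely what the additional \refordren\ hypothesis buys via the identity $2\int_0^1 g(s)\,ds - g(0)-g(1)=\int_0^1(1-2s)(g'(s)-g'(0))\,ds$; "second-order symplecticity" on its own is not quite the mechanism.
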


This is proven at the end of Section~\ref{Sec:CoupleVerlet}.

\nv{
\begin{rmq}
In the Gaussian case, $\pi_\delta$ is an explicit Gaussian measure and thus $\mathcal W_2(\pi,\pi_\delta)$ is known. More precisely, in that case, we get $\mathcal W_2(\pi,\pi_\delta) \simeq \delta^2 \sqrt{dL}/4$ as $\delta \rightarrow 0$ (we refer to the upcoming work \cite{MoiRomberg} for the detailed analysis of the Gaussian case), which means the $\delta^2 \sqrt d$ dependency under Assumption~\refind\ is sharp.

On the contrary, for the dependency in $d$ of the total variation  bound, the results of \cite{Chatterji3} give a bound of order $\sqrt d$ under the condition \refordren\ with $\ell=2$, while we only get $d^{3/4}$. We discuss this in more details in Section~\ref{Sec:otherworks}.
\end{rmq}
}
\nv{
Although one can design academic examples where  \refordren\ holds for some $\ell >2$ but not for $\ell=2$, while \refsmooth\ holds, this does not correspond to practical cases of interest. The reason why we decided to work under a framework with possibly $\ell>2$ is the following. For potentials that behave like $U(x) = |x|^\alpha$ with $\alpha>2$ at infinity, \refordren\ holds with $\ell>2$ but not $\ell=2$. On the other hands \refsmooth\ doesn't hold, but it can be replaced by a condition similar to \refordren\ but on $\na U$ rather than $\na^2 U$. Moreover the OBABO integrator becomes unstable and should be replaced by an adaptive time-step/tamed version. Adapting our analysis to this case, similar but more involved, is out of the scope of the present work, but  we wanted to highlight that working with such local Lipschitz-continuity conditions is possible and possibly  leads to a higher dependency in the dimension of the bias estimates due to additional moments, but not necessarily, as illustrated by the separable case.
}

 \subsection{The Metropolis-adjusted chain}\label{Sec:mainresMOBABO}

Since the continuous-time Langevin diffusion has, in the smooth and convex case, a convergence rate that is independent from the dimension, and as seen in Theorem~\ref{thm_main:Wpcontract},  the OBABO chain has a convergence rate that depends on the dimension only, possibly, through  the time-step $\delta$. Hence, the number of steps of the chain required in order to sample the target measure is of order $1/\delta$ (up to some logarithmic terms). On the other hand,  the rejection probability in the OM(BAB)O scheme is of order $\delta^2$. This means that,  as $\delta$ vanishes, there is a high probability that no rejection occurs during the whole simulation. Consequently, results obtained for the unadjusted OBABO chain are straightforwardly transfered to the OM(BAB)O chain. For simplicity we focus on the total variation distance and confidence intervals, but similar adaptations could be conducted for Wasserstein distances.

In the following, under assumption~\refconvex, we denote $z_\star=(x_\star,0)$ (which is  the minimum of $H(x,v)=U(x)+|v|^2/2$) and, for $p\geqslant 1$ and $\nu \in\mathcal P_p(\R^{2d})$, consider the moments
\[M_{\nu,p} \ = \ \mathbb E_{\nu}\po |Z-z_\star|^p\pf\,.\]

\begin{prop}\label{prop_main:Metropolis}
Under the conditions of Theorem~\ref{thm_main:Wpcontract}, for all   $n\in\N_*$ and $\nu,\mu\in\mathcal P_{2}(\R^{2d})$, 
\[\| \nu P_M^n - \mu P_M^n \|_{TV}   \leqslant \| \nu P^n - \mu P^n \|_{TV} + \mathcal E(\nu,\delta,n,d)+ \mathcal E(\mu,\delta,n,d)  \,,\]
where $\mathcal E$ is such that: 
\begin{enumerate}
\item For all $\nu  \in\mathcal P_{2}(\R^{2d})$, considering $K_9$ as in Corollary~\ref{cor:nuPkPO},
\[\mathcal E(\nu,\delta,n,d)  \  \leqslant \  \delta K_9 \po  M_{\nu,2}+    \delta n d \pf  \,.\]
\item If  condition~\refordren\ holds and $\nu\in\mathcal P_{\ell+1}(\R^{2d})$, considering $K_{10}$ as in Corollary~\ref{cor:nuPkPO},
\[\mathcal E(\nu,\delta,n,d) \   \leqslant\   \delta^2 K_{10} \po  M_{\nu,\ell+1} +   \delta n \po d+ \ell-1\pf^{(\ell+1)/2} \pf  \,.\]
\item If  condition~\refind\ holds, and $\nu\in\mathcal P_{\ell+1}(\R^{2d})$, considering $K_{10}$ as in Corollary~\ref{cor:nuPkPO} and, for $i\in\cco 1,d\ccf$, denoting by $\nu_i$ the law of $Y_i$ where $Y =\mathcal Q^{-1} X$ with $X\sim \nu$, 
\[\mathcal E(\nu,\delta,n,d) \   \leqslant\  \delta^2 K_{10} \po \sum_{i=1}^d   M_{\nu_i,\ell+1} +    \delta n d \ell^{(\ell+1)/2} \pf  \,.\]
 \end{enumerate}
\end{prop}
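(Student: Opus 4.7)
My plan is to couple an OM(BAB)O chain and an OBABO chain on a common probability space, started from the same initial point $Z_0\sim\nu$, sharing the same Gaussian increments in every O step and the same uniform $W$ in each accept/reject step. From the explicit transition written out at the end of Section~\ref{Sec:MOBABO}, the OM(BAB)O transition coincides with the OBABO transition exactly when the MH step accepts; hence, writing $\tau$ for the first index at which a rejection occurs, the two chains agree up to time $\tau$ and
\[ \|\nu P_M^n - \nu P^n\|_{TV} \ \leqslant \ \mathbb{P}_{\nu}(\tau \leqslant n) \ \leqslant \ \sum_{k=0}^{n-1} \E_\nu\bigl[ 1- \alpha(X_k, V_k') \bigr], \]
where $(X_k, V_k')$ is the state of the $P$-chain just after its first O half-step at iteration $k$. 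The triangle inequality then yields the stated decomposition, with $\mathcal{E}(\nu,\delta,n,d)$ controlled, up to constants, by the right-hand side above and similarly for $\mu$.

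\textbf{Local energy error.} Using $1-e^{-x_+}\leqslant x_+$, the summand is bounded pointwise by $\bigl(H\circ\Phi_V - H\bigr)_+$. Writing $(x_1,v_1')=\Phi_V(x,v)$, a Taylor expansion of $U(x_1)$ around $x$ combined with the explicit formula for $\Phi_V$ produces the identity
\[ H\circ\Phi_V(x,v) - H(x,v) \ = \ -\tfrac{\delta}{2}\bigl( \na U(x_1)-\na U(x) \bigr)\cdot v \ + \ R(x,v),\]
where $R$ gathers the $\delta^2|\na U|^2$ terms and the second-order Taylor remainder. Under \refsmooth\ alone I estimate $R$ by $\delta^2$ times a polynomial of degree $2$ in $|x-x_\star|$ and $|v|$, which produces case (1). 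Under \refordren\ I expand one more order: the dominant $\delta^2$ contributions cancel thanks to the time-reversible palindromic BAB structure (the standard energy-preservation-to-higher-order property of symplectic integrators), leaving a remainder controlled by $|\na^2 U(\xi_1)-\na^2 U(\xi_2)|$, which by \refordren\ is bounded by $\delta^3$ times a polynomial of degree $\ell+1$. Under \refind, the dynamics decouples in the basis $\mathcal Q$ and the pointwise bound splits as a sum of $d$ one-dimensional contributions, each involving only the corresponding coordinate.

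\textbf{Summing over the trajectory.} To bound the sum over $k=0,\dots,n-1$ I invoke Corollary~\ref{cor:nuPkPO}, which supplies the constants $K_9,K_{10}$ and uniform-in-$k$ moment estimates for the $P$-chain (the velocity part being controlled by the O half-step preceding the measurement). The $k=0$ contribution produces the initial term $M_{\nu,2}$ in case (1) or $M_{\nu,\ell+1}$ in cases (2)-(3); the remaining steps are dominated by the stationary moments of $\pi_\delta$, which, combined with the $\W_p$-contraction of Theorem~\ref{thm_main:Wpcontract}, are of order $d^{j/2}$ in the general case, $(d+\ell-1)^{(\ell+1)/2}$ under \refordren, and split into $d\,\ell^{(\ell+1)/2}$ under \refind\ via the coordinate decomposition. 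This yields the respective $\delta\,n\,d$, $\delta\,n\,(d+\ell-1)^{(\ell+1)/2}$ and $\delta\,n\,d\,\ell^{(\ell+1)/2}$ terms, together with the $\sum_i M_{\nu_i,\ell+1}$ factor in case (3).

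\textbf{Main difficulty.} The delicate step is the pointwise estimate of $(H\circ\Phi_V-H)_+$ under \refordren: reaching order $\delta^3$ rather than $\delta^2$ requires exploiting the BAB palindromic symmetry to cancel the leading terms of the Taylor expansion, and the polynomial weight $|x-x_\star|^{\ell-2}+|y-x_\star|^{\ell-2}$ has to be tracked precisely so that it combines cleanly with the moment bounds of Corollary~\ref{cor:nuPkPO}. Once this pointwise bound is secured, the coupling, union bound and separable reduction are routine.
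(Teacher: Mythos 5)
Your proposal is correct and follows essentially the same route as the paper, whose proof of this proposition is literally ``combine Lemma~\ref{lem:TVDoeblin} and Corollary~\ref{cor:nuPkPO}''. Your first paragraph reconstructs the content of Lemma~\ref{lem:TVDoeblin}: the paper proves its first claim by telescoping $\|\nu P_M^n-\nu P^n\|_{TV}\leqslant\sum_{k=1}^n\|\nu P^{k-1}(P-P_M)\|_{TV}$ and applying Lemma~\ref{lem:piPV}, and only uses the coupling-until-first-rejection construction you describe for its second claim (the concentration statement); the two give the same bound and the choice is immaterial. Your third paragraph then correctly defers the moment bookkeeping to Corollary~\ref{cor:nuPkPO}, whose conclusions you state accurately (note that the $M_{\nu,p}$ term really comes from the geometric sum $\sum_k(1-\delta\kappa)^{k/2}$, not from $k=0$ alone, but this is a matter of phrasing).

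The one place where you take a slightly different path is the pointwise energy-error bound (the content of Lemma~\ref{lem:alpha}). You argue for the $O(\delta^3)$ order under \refordren\ by explicit Taylor expansion and term-by-term cancellation of the $\delta^2$ contributions, invoking the symmetry of the Verlet map. The paper instead writes $H\circ\Phi_V-H=\xi\cdot\int_0^1(2g(s)-g(0)-g(1))\,\dd s+\tfrac{\delta^2}{2}g(1)\cdot(g(1)-g(0))$ with $g(s)=(\na U(x)+\na U(x+s\xi))/2$, and gains the extra order via the identity $\int_0^1(2g(s)-g(0)-g(1))\,\dd s=\int_0^1(1-2s)\,(g'(s)-g'(0))\,\dd s$, followed by \refordren\ applied to $g'(s)-g'(0)$. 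Both are valid; the integral form is what lets the paper track the polynomial weights in $|x-x_\star|$ precisely, which is the delicate point you correctly flag. Since you also cite Corollary~\ref{cor:nuPkPO}, which already incorporates Lemma~\ref{lem:alpha}, this sketch is redundant with your own citation, but it does not introduce an error. (One small caveat: with the paper's convention $\|\nu-\mu\|_{TV}=\inf 2\mathbb P(X\neq Y)$, your coupling inequality should carry a factor $2$, i.e. $\|\nu P_M^n-\nu P^n\|_{TV}\leqslant 2\,\mathbb P_\nu(\tau\leqslant n)$; this is absorbed in $K_9,K_{10}$.)
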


This is proven at the end of  Section~\ref{Sec:proofMOBABO}. \nv{To fix ideas, taking first the limit as $\delta\rightarrow 0$ and then the leading terms as $L,L_\ell\rightarrow +\infty$ and $m\rightarrow 0$ (with $L_{\ell}\ll L^2$), we get 
\[K_9 \ \simeq \ \frac{18L^2 \gamma}{m^2} \,, \qquad 
K_{10} \ \simeq \ \frac{ 9 \times 12^{(\ell-1)/2} L^{(\ell+5)/2} \gamma}{m^{(\ell+3)/2}}            \,.\]
}

\medskip

Applied with $\mu = \pi$ (so that $\mu P_M^n = \pi$) and in combination with Corollary~\ref{Cor:r*P}, this gives a non-asymptotic bound on $\|\nu P_M^n-\pi\|_{TV}$.

\begin{rmq}\label{rmq:localCouplMetropolis}
Contrary to Corollary~\ref{Cor:r*P} for the OBABO chain, Proposition~\ref{prop_main:Metropolis} does not provide a bound on the total variation convergence rate of the OM(BAB)O chain. Nevertheless,  combining Proposition~\ref{prop_main:Metropolis} with Corollary~\ref{Cor:r*P} yields local coupling estimates, i.e. for all $R>0$, a suitable choice of $\delta$ and $n$ ensures that
\[\forall z,z'\in\{y\in\R^{2d},\ |y-z_\star|\leqslant R\}\,,\qquad \| P_M^n(z,\cdot) - P_M^n(z',\cdot) \|_{TV}   \leqslant \frac12\,.\]
Contrary to the one-step Doeblin condition considered in \cite{QinHobert}, these  $n$-steps coupling bounds have a reasonably nice scaling with $\delta$ and $d$.
\end{rmq}

Similarly, the non-asymptotic confidence intervals of Theorem~\ref{thm_main:concentration} can be transfered to the Metropolis-adjusted chain:

\begin{thm}\label{thm_main:concentration_Metropolis}
Under the conditions of Theorem~\ref{thm_main:Wpcontract}, if $\mu_0 \in\mathcal P(\R^{2d})$ satisfies a log-Sobolev inequality with constant $C'>0$ and if $(Z_k)_{k\in \N}$ is an OM(BAB)O  chain with $Z_0\sim \mu_0$, then for all $1$-Lipschitz functions $\varphi$ on $\R^{2d}$, $n\geqslant 1$ and $u\geqslant 0$,
\begin{multline*}
\mathbb{P}\po \left|\frac1n\sum_{k=1}^n   \phi(Z_k) - \pi(\phi)\right| \geqslant u + \frac{2K_1}{n\kappa\delta}   \W_1(\mu_0, \pi_\delta)  + \W_1(\pi,\pi_\delta)\pf \\ 
 \  \leqslant \  2  \exp \po - \   \frac{  n u^2 \kappa \delta }{4  C_{lS}+ 4K_1^2C'/(n\kappa\delta)}\pf + \mathcal E(\mu_0,\delta,n,d) \,.
 \end{multline*}
where $C_{lS}$ is given in Theorem~\ref{thm_main:concentration} and $\mathcal E(\nu,\delta,n,d) $ is  as in Proposition~\ref{prop_main:Metropolis}.
\end{thm}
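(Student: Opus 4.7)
The plan is to transfer the concentration bound of Theorem~\ref{thm_main:concentration}, which applies to the unadjusted OBABO chain around $\pi_\delta(\varphi)$, to the Metropolis-adjusted OM(BAB)O chain around $\pi(\varphi)$, via a synchronous coupling argument. The two extra costs compared to Theorem~\ref{thm_main:concentration} will be the invariant-measure bias $\W_1(\pi,\pi_\delta)$ and the probability that the Metropolis step rejects at least once during the first $n$ iterations; the latter will be absorbed into the term $\mathcal E(\mu_0,\delta,n,d)$ already estimated in Proposition~\ref{prop_main:Metropolis}.

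First I construct a coupling $(Z_k,\tilde Z_k)_{k\in\N}$ starting from $Z_0=\tilde Z_0\sim\mu_0$, where $(Z_k)$ follows the OM(BAB)O dynamics and $(\tilde Z_k)$ the OBABO dynamics, driven at each step by the same Gaussian vectors $G_k, G_k'$, and with $(Z_k)$ additionally using independent uniform variables $W_k$ for the accept/reject step. From the explicit description of $P_M$ and $P$ in Section~\ref{Sec:MOBABO}, the two transitions produce exactly the same output whenever the Metropolis step is accepted. Letting $\tau$ be the smallest $k\geq 1$ at which rejection occurs, we have $Z_k=\tilde Z_k$ for all $k<\tau$, so on $\{\tau>n\}$ the empirical averages along the two chains coincide.

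Second, since $\varphi$ is $1$-Lipschitz, $|\pi(\varphi)-\pi_\delta(\varphi)|\leq \W_1(\pi,\pi_\delta)$, so it suffices to bound the probability that the empirical average of $\varphi$ along $(Z_k)_{k=1,\dots,n}$ deviates from $\pi_\delta(\varphi)$ by at least $u+\tfrac{2K_1}{n\kappa\delta}\W_1(\mu_0,\pi_\delta)$. Splitting on $\{\tau>n\}$ and its complement, and using the coincidence of the two empirical averages on the former, this probability is bounded by the analogous deviation probability for $(\tilde Z_k)$ plus $\mathbb P(\tau\leq n)$. The first term is controlled directly by Theorem~\ref{thm_main:concentration} applied to the OBABO chain $(\tilde Z_k)$, giving precisely the sub-Gaussian bound with constant $C_{lS}$.

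Third, $\mathbb P(\tau\leq n)$ is (up to a factor $2$) exactly the quantity estimated in Proposition~\ref{prop_main:Metropolis}: the standard coupling inequality yields $\|\mu_0 P^n-\mu_0 P_M^n\|_{TV}\leq 2\mathbb P(Z_n\neq \tilde Z_n)\leq 2\mathbb P(\tau\leq n)$, and the proof of that proposition in Section~\ref{Sec:proofMOBABO} works by summing the expected Metropolis rejection rates $\mathbb E[1\wedge(H\circ\Phi_V-H)_+]$ along the trajectory, using the moment estimates on $Z_k$ issued from Theorem~\ref{thm_main:Wpcontract}. The main obstacle of the transfer is therefore already handled in Proposition~\ref{prop_main:Metropolis}; only the coupling reduction, which is essentially a three-line argument, has to be carried out here.
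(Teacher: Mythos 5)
Your proposal is correct and follows the same route as the paper: the paper's proof cites the second claim of Lemma~\ref{lem:TVDoeblin} (which is exactly your synchronous coupling argument keeping the two chains equal until the first Metropolis rejection), Corollary~\ref{cor:nuPkPO} (the summed rejection probability bound), and the Kantorovich dual representation of $\W_1$ for $|\pi(\varphi)-\pi_\delta(\varphi)|\leq\W_1(\pi,\pi_\delta)$, and you have reconstructed all three ingredients. One small imprecision in your third paragraph: the chain of inequalities $\|\mu_0 P^n-\mu_0 P_M^n\|_{TV}\leq 2\mathbb P(Z_n\neq\tilde Z_k)\leq 2\mathbb P(\tau\leq n)$ goes in the wrong direction for what you need (it bounds the TV distance by $\mathbb P(\tau\leq n)$, not the other way); the bound actually used is the direct estimate $\mathbb P(\tau\leq n)\leq\sum_{k=0}^{n-1}\mu_0 P^k P_O(1-\alpha)\leq\mathcal E(\mu_0,\delta,n,d)$, which you nonetheless do identify correctly in the same paragraph when you say the proposition ``works by summing the expected Metropolis rejection rates.''
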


This is proven at  then end of Section~\ref{Sec:proofMOBABO}.

\subsection{Conclusion and related works}\label{Sec:otherworks}

Let us summarize the previous results, focusing on the behavior as $\delta \rightarrow 0$ and $d\rightarrow +\infty$ for fixed values of $p,m,L,L_\ell,\ell$ and $\gamma = \nv{2\sqrt L}$. We suppose that the initial condition $\mu_0$ is such that, for $p\geqslant 1$, $ \mathcal W_p(\mu_0,\pi) + M_{\mu_0,p}^{1/p} \leqslant \tilde K_p   \sqrt{d}$ for some $\tilde K_p$ that does not depend on $d$, which is feasible in practice (see Remark~\ref{rmq:initialcondition} and Lemma~\ref{lem:moments}). \nv{For the dependency on $L,m,L_\ell$, we only write the leading terms when $L,L_\ell\rightarrow +\infty$ and $m\rightarrow 0$.}

For a desired accuracy $\varepsilon>0$, we denote
\begin{eqnarray*}
n_{\varepsilon,p} & = & \inf\{n\in\N,\ \W_p(\mu_0 P^n,\pi) \leqslant \varepsilon\}\\
 n_{\varepsilon,TV} & = & \inf\{n\in\N,\ \|\mu_0 P^n-\pi\|_{TV} \leqslant \varepsilon\}\\
  n_{\varepsilon,M} & = & \inf\{n\in\N,\ \|\mu_0 P_M^n-\pi\|_{TV} \leqslant \varepsilon\}\,.
\end{eqnarray*}
Choosing those as non-asymptotic criteria for efficiency is debatable, see the discussion in \cite{dalalyan2} in particular for a different scaling for the Wasserstein distances, nevertheless we stick to these definitions for comparison with previous works.  Table~\ref{TableDiscret} gathers the results on the discretization bias and Table~\ref{TableEfficace} the efficiency bounds that are obtained from our various results. \nv{For simplicity, in Table~\ref{TableEfficace}, concerning the dependency on $L,m$ and $L_\ell$, we only consider the case where the term with $L_\ell$ is negligible with respect to the other one (as in the Gaussian case where $L_\ell=0$) and we don't write the logarithmic terms in these variables.}

\begin{table}[h!]
\[\begin{array}{|c|c|c|c|}
\hline 
 & \text{\refsmooth, \refconvex}  & \text{\refsmooth, \refconvex, \refordren}  &  \text{\refsmooth, \refconvex, \refind}   
 \\ \hline
 & & & \\ 
 \mathcal W_p(\pi,\pi_\delta) & \delta \sqrt{d} \nv{\frac{L^{2}}{m}} & \delta^2 d^{\ell /2} \nv{\po \frac{L^2}{m^{3/2}} + \frac{\sqrt{L} L_\ell}{m^{1+\ell/2}}\pf} & \delta^2 \sqrt{d} \nv{\po \frac{L^2}{m^{3/2}} + \frac{\sqrt{L} L_\ell}{m^{1+\ell/2}}\pf}  \\
& & &  \\ \hline
 & & & \\
 \|\pi-\pi_\delta\|_{TV} & \delta d  \nv{\err{|\ln \po\delta^3 d \pf |}} \nv{\frac{L^{9/4}}{m} } & \delta^2 d^{(\ell+1)/2}\nv{\err{|\ln \po \delta^3 d \pf| } \po \frac{L^{9/4}}{m^{3/2}} + \frac{L^{5/4}L_\ell}{m^{(\ell+3)/2}} \pf }   &  \delta^2 d \err{|\ln \delta|} \nv{ \po \frac{L^{9/4}}{m^{3/2}} + \frac{L^{5/4}L_\ell}{m^{(\ell+3)/2}} \pf } \\
& & &  \\ \hline
\end{array}\]
\caption{Leading terms of the bounds on the equilibrium bias respectively obtained in Propositions~\ref{prop_main:erreur1} and \ref{prop_main:biais}.}\label{TableDiscret}
\end{table}

\begin{table}[h!]
\[\begin{array}{|c|c|c|c|}
\hline 
 & \text{\refsmooth, \refconvex}  & \text{\refsmooth, \refconvex, \refordren}  &  \text{\refsmooth, \refconvex, \refind}   
 \\ \hline
 & & & \\
 n_{\varepsilon,p} & \displaystyle{\frac{\sqrt{d}}{\varepsilon}  \ln \po \frac{ d }{\varepsilon}\pf} \nv{\frac{L^{5/2}}{m^2}} & \displaystyle{\frac{d^{\ell /4}}{\sqrt{\varepsilon}}  \ln \po \frac{ d }{\varepsilon}\pf}  \nv{\frac{L^{3/2}}{m^{7/4}}}  & \displaystyle{\frac{d^{1 /4}}{\sqrt{\varepsilon}} \ln \po \frac{ d }{\varepsilon}\pf}\nv{\frac{L^{3/2}}{m^{7/4}}} \\
& & &  \\ \hline
 & & & \\
 n_{\varepsilon,TV} &  \displaystyle{\frac{d}{\varepsilon}  \ln^2 \po \frac{ d }{\varepsilon}\pf}\nv{\frac{L^{11/4}}{m^{2}}} & \displaystyle{\frac{d^{(\ell+1) /4}}{\sqrt{\varepsilon}}  \ln^2 \po \frac{ d }{\varepsilon}\pf} \nv{\frac{L^{13/8}}{m^{7/4}}} & \displaystyle{\frac{\sqrt d }{\sqrt{\varepsilon}}  \ln^2 \po \frac{ d }{\varepsilon}\pf} \nv{\frac{L^{13/8}}{m^{7/4}}} \\
 & & &  \\ \hline
  & & & \\
\nv{ n_{\varepsilon,M}} &  \nv{\displaystyle{\frac{d}{\varepsilon}  \ln^4 \po \frac{ d }{\varepsilon}\pf}\frac{L^{7/2}}{m^{4}}}  & \nv{\displaystyle{\frac{d^{(\ell+1) /4}}{\sqrt{\varepsilon}}  \ln^3 \po \frac{ d }{\varepsilon}\pf} \frac{L^{(\ell+9)/4}}{m^{(\ell+9)/4}}}   & \nv{\displaystyle{\frac{\sqrt d}{\sqrt{\varepsilon}}  \ln^3 \po \frac{ d }{\varepsilon}\pf} \frac{L^{(\ell+9)/4}}{m^{(\ell+9)/4}}}  \\
 & & &  \\ \hline
\end{array}\]
\caption{Leading terms of the efficiency bounds obtained with a suitable choice of $\delta$, based on    Propositions~\ref{prop_main:erreur1} and \ref{prop_main:biais} (for the equilibrium bias in the unadjusted case), Theorem~\ref{thm_main:Wpcontract} and Corollary~\ref{Cor:r*P} (for the convergence of the OBABO chain to equilibrium) and Proposition~\ref{prop_main:Metropolis} (for the OM(BAB)O chain).}\label{TableEfficace}
\end{table}

Let us now compare our results to  previous works.

\medskip

First, we mention that, starting back at least to the work of Talay \cite{Talay} (who was also already concerned with the discretized chain), there has been a substantial amount of works in the last two decades on obtaining quantitative convergence  rates for the \emph{continuous-time} (underdamped) Langevin diffusion \eqref{Eq:Continu-Langevin}, leading to the theory of hypocoercivity \cite{Villani2009,DMS2009,Herau2007}. In the convex and smooth case, the dimension-free convergence is established by the author in \cite{MonmarcheVFP} for mean-field particles using entropic hypocoercivity techniques (implying the Wasserstein convergence, see e.g. \cite{MonmarcheGuillin} for details), and later by \cite{Chatterji1,dalalyan1,Zajic} by direct coupling arguments in the context of MCMC. The Wasserstein contraction induced by the parallel coupling in the smooth and convex case was first established in \cite{Malrieu} (but without the explicit dependency on the dimension). In the non-convex case, quantitative results are obtained through an explicit combination of reflection and parallel coupling in \cite{EberleGuillinZimmer,dalalyan2,Chatterji2}. Using hypocoercive techniques, one of the first result on the underdamped Langevin algorithms motivated by stochastic algorithms (MCMC and simulated annealing) is established in  \cite{MonmarcheRecuitHypo}, focusing on low-temperature rather than high-dimensional estimates. In the recent \cite{Chatterji3}, a similar result is established, focusing on high-dimensional MCMC (in both \cite{Chatterji3} and \cite{MonmarcheRecuitHypo}, the convergence is obtained in term of the log-Sobolev constant of the target distribution; the main difference is thus how the dependency on the parameters of interest is presented).

By contrast, Theorem~\ref{thm_main:Wpcontract} and Corollary~\ref{Cor:r*P} give explicit quantitative convergence rates for a \emph{discrete-time} Markov chain obtained as a discretization of \eqref{Eq:Continu-Langevin}. \nv{As mentioned in the introduction, this leads to the concentration inequalities of Theorem~\ref{thm_main:concentration} and it was motivated by the work \cite{QinHobert}}. That being said, in \cite{Chatterji1,dalalyan1,Zajic,dalalyan2,Chatterji2,Chatterji3,Zajic}, a discretization error analysis is conducted which, together with the convergence rate of the continuous-time process, provides \emph{in fine} non-asymptotic bounds   for the discrete-time algorithm. In the smooth and convex case, up to some logarithmic terms, $\W_2$ efficiency bounds of order $\sqrt d / \varepsilon$ are obtained in  \cite{Chatterji1,Zajic}  with a first-order approximation of \eqref{Eq:Continu-Langevin} and of order $\sqrt{d/\varepsilon}$ in \cite{dalalyan1}  with  a second-order scheme (that requires the computation of $\na^2 U$ at each iteration) under the Lipschitz Hessian condition (that corresponds to \refordren\ with $\ell=2$). These scalings are thus similar to our results on the OBABO chain under similar conditions. In \cite{Chatterji3}, using a second order scheme with a similar complexity of the OBABO chain, efficiency bounds of order $\sqrt{d/\varepsilon}$ are obtained for the relative entropy (Kullback-Leibler divergence) under the Lipschitz Hessian condition. Through Pinsker's and Talagrand's inequalities, this yields $\W_2$ and total variation efficiency bounds of order  $\sqrt{d}/\varepsilon$ \nv{(since the results have to be applied with $\varepsilon$ replaced by $\varepsilon^2$)}, to compare to our results respectively of $\sqrt{d/\varepsilon}$ and $d\nv{^{3/4}}/\sqrt\varepsilon$ for these distances under \refordren\ with $\ell=2$. \nv{Thus, for the unadjusted process, under the same conditions as \cite{Chatterji3}, we see that we have a better dependency in term of $\varepsilon$ for both Wasserstein and total variation distances, the same dependency in the dimension for the Wasserstein distance but a worse dependency in the dimension ($d^{3/4}$ versus $\sqrt d$) for the total variation distance. In the separable case we recover the $\sqrt d$ scaling for the total variation distance, and improve the scaling to $d^{1/4}$ for the Wasserstein one. Our method is very different from the one of \cite{Chatterji3}. Comparing the discrete scheme with the continuous process, they get a bound on the relative entropy that scales as $d$, which gives $\sqrt d$ for the total variation. We get a bound on the total variation by comparing the unadjusted process with the Metropolis-adjusted one. We can see that the bad dependency in $d$ for the total variation in our case stems from the fact we bound  the expected Metropolis rejection probability by a cubic term (Lemma~\ref{lem:alpha} with $\ell=2$) whose expectation is of order $d^{3/2}$. The bounds of Lemma~\ref{lem:alpha} seems to be sharp, so it is the method of comparing the unadjusted and adjusted processes which leads to a non-optimal scaling. In some sense, by requiring the two processes to stay equal, we do not use the regularization properties of the process. By working with the relative entropy, the law of the unadjusted process can be compared with the law of the continuous time process: the processes (driven by the same Brownian motion) do not stay equal but remain close which, with the regularization properties of the process, is in fact sufficient to control the relative entropy of the laws (roughly speaking), hence the total variation. }

Similarly to the results on the convergence rate of the Langevin diffusion, several works are concerned with the convergence rates of \emph{idealized} HMC algorithms, i.e. algorithms that rely on the exact simulation of the continuous-time Hamiltonian dynamics, \cite{idealHMC1,idealHMC2,DoucetHMC,Mangoubi,EberleHMC}.
 Non-asymptotic bounds on the unadjusted HMC are obtained in \cite{Mangoubi}, of order $\sqrt d /\varepsilon$ for the $\W_1$ distance in the smooth and convex case with a first-order scheme, of order $d^{1/4}/\sqrt\varepsilon$ in the separable case  (and later in \cite{Mangoubi2} under a weaker condition), and similarly for the Metropolis-adjusted algorithm. Again, this is similar to our rates for the OBABO chain.
 
 The works \cite{Dwivedi,Dwivedi2} are concerned with Metropolis-adjusted algorithms (Metropolis-adjusted overdamped Langevin algorithm -- MALA -- and HMC), and establish total variation efficiency bounds that are logarithmic in the accuracy $\varepsilon$. The method is quite different to ours, based on conductance bounds. In term of number of computations of gradients, the scalings are $d$ for the MALA algorithm and $d^{11/12}$ for the HMC one in the smooth, convex, Lipschitz Hessian case. In term of dependency in $d$, for the Metropolis-adjusted case, we only get $d$ in this case, and $\sqrt d$ in the separable case. It would be interesting to use the methods of \cite{Dwivedi,Dwivedi2} for the OM(BAB)O chain (see also Remark~\ref{rmq:localCouplMetropolis}).

 For more considerations on the family of sampler based, like the HMC process and Langevin diffusion, on the Hamiltonian dynamics, we also refer to the recent preprint \cite{Song} and references therein.

\section{Study of the OBABO chain}\label{Sec:proofOBABO}

\subsection{Wasserstein contraction in the convex case}\label{sec:proofContract}

For $z=(x,v)\in \R^{2d}$, and $a,b>0$ with $b^2 < a$, we consider the Euclidean norm
\[\|z\|^2_{a,b} \ = \ |x|^2 + 2 b x\cdot v + a|v|^2\,.\]

For $x,v,g,g'\in \R^d$, denote
\begin{eqnarray*}
\Theta_1(x,v,g,g') & = & x+ \delta \po \eta v + \sqrt{1-\eta^2} g\pf  - \frac{\delta^2}2 \na U(x)\\
\Theta_2(x,v,g,g') & = & \eta^2 v - \frac{\delta\eta }{2}\po \na U(x) + \na U\po \Theta_1(x,v,g,g')\pf\pf + \sqrt{1-\eta^2} \po \eta g +  g'\pf\,,
\end{eqnarray*}
and $\Theta=(\Theta_1,\Theta_2)$. The transition~\eqref{Eq:ABOBA} of the OBABO chain is thus given by $(x_1,v_1) = \Theta(x_0,v_0,G,G')$ with independent $G,G'\sim \mathcal N(0,\Id)$. We consider two initial conditions $(x_0,v_0)$, $(y_0,w_0)\in\R^d\times\R^d$, two independent sequences of independent standard Gaussian variables $(G_n)_{n\in\N}$ and $(G'_n)_{n\in\N}$, and the parallel coupling of two OBABO chains given by
\begin{equation}\label{Eq:couplage_parallele}
\forall n\in \N\,,\qquad (x_{n+1},v_{n+1}) = \Theta(x_n,v_n,G_n,G_n')\,,\qquad (y_{n+1},w_{n+1}) = \Theta(y_n,w_n,G_n,G_n')\,.
\end{equation} 

\begin{prop}\label{prop:contraction}
Under Assumption~\refsmooth and \refconvex, suppose moreover that \nv{$\gamma \geqslant 2\sqrt{L}$ and that $\delta \leqslant m/(33\gamma^3)$. Set 
\[a\ =\ 1/L\,,\qquad  b\ =\ 1/\gamma\,, \qquad \kappa \ = \  m/(3\gamma)\,.\]
}
Then, $b^2 \leqslant a/4$ and 
 for all $(x_0,v_0),(y_0,w_0)\in \R^{2d}$, the parallel coupling given by \eqref{Eq:couplage_parallele} is such that almost surely, for all $n\in\N$,
\[\|(x_n,v_n)-(y_n,w_n)\|_{a,b}^2 \ \leqslant \ \po 1 - \delta \kappa \pf^n \|(x_0,v_0)-(y_0,w_0)\|_{a,b}^2\,.\]
\end{prop}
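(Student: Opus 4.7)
The plan is to derive the stated inequality from a one-step deterministic contraction, iterated over $n$. Because the parallel coupling uses identical Gaussian innovations $G_n$ and $G_n'$ in both chains, the differences $\tilde x_n := x_n - y_n$ and $\tilde v_n := v_n - w_n$ satisfy the purely deterministic recursion
$$
\tilde x_{n+1} \ = \ \tilde x_n + \delta \eta \tilde v_n - \tfrac{\delta^2}{2}\xi_n, \qquad
\tilde v_{n+1} \ = \ \eta^2 \tilde v_n - \tfrac{\delta \eta}{2}(\xi_n + \xi_{n+1}),
$$
where $\xi_n := \na U(x_n) - \na U(y_n)$. It therefore suffices to establish the one-step bound $\|(\tilde x_1, \tilde v_1)\|_{a,b}^2 \leqslant (1-\delta\kappa) \|(\tilde x_0, \tilde v_0)\|_{a,b}^2$ for arbitrary initial conditions, from which the proposition follows by induction on $n$.

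I would first check that $b^2 \leqslant a/4$ is equivalent to $\gamma \geqslant 2\sqrt L$, so $\|\cdot\|_{a,b}$ is a genuine Euclidean norm, comparable to the standard one within factors depending on $L$ only. The three inputs I would use to control the recursion are: convexity, which gives $\xi_i \cdot \tilde x_i \geqslant m |\tilde x_i|^2$ for $i=0,1$; the Lipschitz bound $|\xi_i| \leqslant L |\tilde x_i|$; and co-coercivity $|\xi_i|^2 \leqslant L\, \xi_i \cdot \tilde x_i$ (a consequence of the first two for convex $L$-smooth $U$), which will be useful for absorbing quadratic-in-$\xi$ error terms.

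Next, I would expand $\|(\tilde x_1, \tilde v_1)\|_{a,b}^2 = |\tilde x_1|^2 + 2b\, \tilde x_1 \cdot \tilde v_1 + a|\tilde v_1|^2$ and organize it as a polynomial in $\delta$. At the leading order, the dissipation of the $a|\tilde v_0|^2$-component comes from the O step via $a\eta^4 = a(1-2\delta\gamma + O(\delta^2))$; together with the subleading contributions $\delta^2 \eta^2$ from $|\tilde x_1|^2$ and $2b\delta \eta^3$ from the cross-term, and using $\gamma \geqslant 2\sqrt L$, these produce a net contraction rate of order $\gamma/L$ on the $a|\tilde v_0|^2$ piece. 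The dissipation of the $|\tilde x_0|^2$-component comes from the cross-term $2b \tilde x_1 \cdot \tilde v_1 \supset -b\delta\eta (\xi_0 + \xi_1) \cdot \tilde x_0$, to which convexity applies (after rewriting $\xi_1 \cdot \tilde x_0 = \xi_1 \cdot \tilde x_1 - \xi_1 \cdot (\tilde x_1 - \tilde x_0)$ and bounding the error), and from the $-\delta^2 \xi_0 \cdot \tilde x_0$ term in $|\tilde x_1|^2$, producing a rate of order $bm = m/\gamma$. Both rates comfortably exceed $\kappa = m/(3\gamma)$, leaving a margin of roughly two-thirds of the dissipation available for error absorption.

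The main obstacle is the bookkeeping of the $O(\delta^2)$ and higher-order error terms, which include products such as $\delta^3 \eta\, \tilde v_0 \cdot \xi_0$, $\delta^2 |\xi_0 + \xi_1|^2$ and $\delta^3 \xi_0 \cdot \xi_1$, together with the implicit dependence of $\xi_1$ on $\tilde x_1$ (hence on $\xi_0$). Each is controlled by Young's inequality $2|AB| \leqslant \theta A^2 + B^2/\theta$ with a tuned $\theta$, combined with $|\xi_i| \leqslant L |\tilde x_i|$ and the elementary expansion $|\tilde x_1|^2 \leqslant 2|\tilde x_0|^2 + 2\delta^2 \eta^2 |\tilde v_0|^2 + O(\delta^4)$. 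The step-size restriction $\delta \leqslant m/(33\gamma^3)$ is precisely what is needed to absorb the sum of these error terms into the remaining dissipation margin, the explicit constant $33$ simply accumulating the worst-case multiplicative factors produced along the way.
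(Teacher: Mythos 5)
Your plan coincides with the paper's proof in all essentials: by the parallel coupling the difference $(\tilde x_n,\tilde v_n)$ evolves by a deterministic recursion, so it suffices to prove a one-step contraction $\|(\tilde x_1,\tilde v_1)\|_{a,b}^2 \leqslant (1-\delta\kappa)\|(\tilde x_0,\tilde v_0)\|_{a,b}^2$; the leading $O(\delta)$ part gives dissipation at rate $\sim m/\gamma$ (cross-term and convexity for $|\tilde x_0|^2$, the O step for $a|\tilde v_0|^2$), and the $O(\delta^2)$ remainder is absorbed using Young's inequality and the smallness condition on $\delta$. The only difference is cosmetic: the paper introduces intermediate symmetric matrices $Q,Q'$ with $mI_d\leqslant Q,Q'\leqslant L I_d$ satisfying $\xi_i=Q^{(i)}\tilde x_i$, then writes the one-step map as $(I_{2d}+\delta A)\bar z+h$ and extracts the dissipation from $MA+A^{\mathsf T}M$, whereas you work directly with $\xi_i$ using convexity, the Lipschitz bound and co-coercivity; both lead to the same leading-order quadratic form and the same error-absorption step (and your route avoids a mild abuse of notation in the paper, which writes $\na U(x)-\na U(y)=\na^2U(\xi)(x-y)$ where one should really take an averaged Hessian $\int_0^1 \na^2 U\big(y+t(x-y)\big)\,\mathrm{d}t$).
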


\begin{rmq}
It is possible to improve the condition on $\gamma$ and the other estimates by assuming that $U(x) = x \cdot S^{-1}x + \tilde U(x)$ for some symmetric positive matrix $S$ and then taking $S$ into account in the definition of the modified norm with more care. In particular in the Gaussian case ($\tilde U = 0$) there is no condition on $\gamma$, it is always possible to design a Euclidean norm that is contracted by the coupling for sufficiently small $\delta$\nv{, see e.g. \cite{MonmarcheContraction}}. Nevertheless, in this work we are mainly concerned with the fact that $\kappa$ is independent from the dimension. 
\end{rmq}

\begin{rmq}
The contraction is almost sure, and not only in expectation. In fact, as will be clear in the proof, the particular law of $(G,G')$ does not intervene, it is sufficient that it does not depend on the position $(x,v)$.
\end{rmq}

\begin{proof}
Let us show that, for this choice of parameters, for all $x,v,y,w,g,g' \in \R^d $, denoting $(x',v')=\Theta (x,v,g,g')$ and $(y',w')=\Theta(y,w,g,g')$,
\[\| (x',v')-(y',w')\|_{a,b}^2 \ \leqslant \ \po 1 - \delta\kappa\pf \|(x,v)-(y,w)\|_{a,b}^2\,,\]
which will immediately yield the result.

As a first step, we identify the leading terms (in term of $\delta$) of the evolution of the norm. The contraction will be proven only thanks to these terms, the higher order ones being treated as a small perturbation. As a consequence, our arguments works for other Markov chains that are an approximation of order at least one (in $\delta$) of the continuous-time Langevin diffusion. Denote $\bx = x-y$, $\bx'=x'-y'$, $\bv = v-w$, $\bv'=v'-w'$ and $\Delta \bar x = \bx'-\bx$. \nv{Let $\xi,\xi'\in\R^d$ be such that $\na U(x')-\na U(y')=\na^2 U(\xi') \bx'$ and $\na U(x)-\na U(y)=\na^2 U(\xi) \bx$, and let $Q=\na^2 U(\xi)$, $Q'=\na^2 U(\xi')$. Under Assumptions~\refsmooth\ and \refconvex, $mI_d\leqslant Q\leqslant LI_d$ in the sense of symmetric matrices, and similarly for $Q'$. With these notations,
\begin{align*}
\begin{pmatrix}
\bx'\\ \bv'
\end{pmatrix} &=
\begin{pmatrix}
\bx\\ \bv
\end{pmatrix}  + \begin{pmatrix}
0 & \delta \eta I_d \\ -\frac{\delta\eta}2(Q+Q')  & (\eta^2-1)I_d 
\end{pmatrix} 
\begin{pmatrix}
\bx\\ \bv
\end{pmatrix} 
- \frac{\delta}{2}
\begin{pmatrix}
\delta Q \bx \\ \eta Q'\Delta \bar x
\end{pmatrix} = (I_{2d}+\delta A)\begin{pmatrix}
\bx\\ \bv
\end{pmatrix} + h
\end{align*}
with
\[A = \begin{pmatrix}
0 &     I_d \\ -\frac{1}2(Q+Q')  & -\gamma I_d 
\end{pmatrix} \,, \qquad h = \begin{pmatrix}
0 & \delta (\eta-1) I_d \\ -\frac{\delta(\eta-1)}2(Q+Q')  & (\eta^2-1+\delta \gamma)I_d 
\end{pmatrix} \begin{pmatrix}
\bx\\ \bv
\end{pmatrix} 
- \frac{\delta}{2}
\begin{pmatrix}
 \delta Q \bx \\ \eta Q'\Delta \bar x
\end{pmatrix}\,.\]
Writing $\bar z=(\bx,\bv)$, $\bar z'=(\bx',\bv')$ and
\[M = \begin{pmatrix}
I_d & b I_d \\ b I_d & a I_d 
\end{pmatrix}\,,\]
we get
\begin{align*}
\|\bar z'\|_{a,b}^2   \ &= \     \|\bar z\|_{a,b}^2 + \delta\bar z \cdot  (MA+A^TM)  \bar z + 2 \bar z\cdot M h + \|\delta A\bar z + h\|_{a,b}^2\\
&\leqslant \ \|\bar z\|_{a,b}^2 + \delta\bar z \cdot  (MA+A^TM)  \bar z + 2 \|\bar z\|_{a,b}\|h\|_{a,b} + 2\delta^2\| A\bar z\|_{a,b}^2 +2\| h\|_{a,b}^2\,.
\end{align*}
The choice $a=1/L$ and $b=1/\gamma$ with the condition $\gamma\geqslant 2\sqrt{L}$ ensures that $b^2\leqslant a/4$ and thus for all $z\in \R^d$,
\begin{align}\label{eq:equi_nome}
\frac12\|z\|_{a,0}^2 \leqslant \|z\|_{a,b}^2\leqslant\frac32\|z\|_{a,0}^2\,.
\end{align}
Using \eqref{eq:equi_nome} and the bounds $|Q|,|Q'|\leqslant L$, $|1-\eta|\leqslant\delta\gamma/2$, 
 $|1-\eta^2-\delta\gamma|\leqslant\delta^2\gamma^2/2$ and
\[|\Delta \bx| = |\delta  \eta \bv   -  \delta^2/2 Q\bx|\leqslant \delta|\bv|+\delta^2L/2|\bx|\]
 we roughly bound
\begin{align*}
\|h\|_{a,b}^2 &\leqslant \frac32\left|\delta (\eta-1) \bv-\frac{\delta^2}{2}Q\bx\right|^2 +\frac32 a\left|-\frac{\delta(\eta-1)}2(Q+Q')\bx+ (\eta^2-1+\delta\gamma)\bv -\frac{\delta\eta}{2}Q'\Delta\bx\right|^2\\
&\leqslant 3 \delta^4\gamma^2/4 |\bv|^2+  3\delta^4L^2/4|\bx|^2+3a \po\delta^2\gamma L/2+\delta^3L^2/4\pf^2|\bx|^2 +3a\po\delta^2\gamma^2/2+\delta^2L/2\pf^2|\bv|^2\\
&\leqslant 6\delta^4\max\po\frac{\po L\gamma/2+\delta L^2/4\pf^2}{L}+\frac{3L^2}{4},\frac{L\gamma^2}4+\po\frac{\gamma^2}2+\frac{ L}2\pf^2\pf\|\bar z\|_{a,b}^2 \\
&\leqslant 3\delta^4 \gamma^4 \|\bar z\|_{a,b}^2 
\end{align*}
where we used that $L \leqslant \gamma^2/4$ and $\delta^2 L \leqslant L^3/\gamma^6\leqslant 1$ to simply the expression,
and similarly
\[
\| A\bar z\|_{a,b}^2 \  \leqslant\ \frac32|\bv|^2+ \frac32 a \left|-\frac12(Q+Q')\bx -\gamma\bv\right|^2
 \ \leqslant\   3L|\bx|^2 + 3a\po\frac{L}{2}+\gamma^2\pf|\bv|^2 \ \leqslant \ \frac{27\gamma^2}8 \| \bar z\|_{a,b}^2 \,,
\]
 so that, using that $6\delta^2\gamma^2 \leqslant 1/2$,
 \[2 \|\bar z\|_{a,b}\|h\|_{a,b} + 2\delta^2\| A\bar z\|_{a,b}^2 +2\| h\|_{a,b}^2 \ \leqslant \ 11 \delta^2\gamma^2   \| \bar z\|_{a,b}^2 \,.\]
 On the other hand,
 \[MA+A^TM = \begin{pmatrix}
 -b(Q+Q')  &-a(Q+Q')/2 \\ -a (Q+Q')/2 & 2(b-a\gamma)I_d
\end{pmatrix} \,. \] 
Bounding $2\bx \cdot (Q+Q')\bv \leqslant \bx \cdot (Q+Q')\bx/\theta + \theta\bv\cdot (Q+Q')\bv$ with $\theta=\gamma/L$ we obtain 
\begin{align*}
\bar z \cdot (MA+A^TM) \bar z & \leqslant \bar z \begin{pmatrix}
 \po - \frac1\gamma+ \frac{1}{2L\theta}\pf (Q+Q')  & 0 \\ 0  & 2(b-a\gamma)I_d + \frac{a\theta}{2}(Q+Q')
\end{pmatrix} \bar z\\
& \leqslant - \frac{m}{\gamma}|\bx|^2 - a \gamma  \po 1-\frac{2L}{\gamma^2 }\pf|\bv|^2 \\
&\leqslant -\frac{2m}{3\gamma} \|\bar z\|_{a,b}^2\,,
\end{align*}
where we used that $\gamma/2 \geqslant L/\gamma \geqslant m/\gamma$ and \eqref{eq:equi_nome}. Finally, we have obtained that
\[\|\bar z'\|_{a,b}^2 \leqslant \po 1 - \frac{2m}{3\gamma}\delta + 11 \delta^2 \gamma^2\pf \|\bar z\|^2_{a,b} \ \leqslant \ \po 1 - \frac{m}{3\gamma}\delta \pf \|\bar z\|^2_{a,b}\,. \]
 
}

\end{proof}

For $p\geqslant 1$ and $a,b>0$ with $b^2\leqslant a/4$, denote by $\mathcal W_{p,a,b}$ the $\mathcal W_p$ Wasserstein distance on $\mathcal P_p\po \R^{2d}\pf$ associated to the distance $\|\cdot\|_{a,b}$, i.e.
\[\W_{p,a,b}\po \nu_1,\nu_2\pf \ = \ \inf_{\pi \in \Gamma(\nu_1,\nu_2)} \po \int_{\R^{2d}\times\R^{2d}} \| z-z'\|_{a,b}^p \pi(\dd z \dd z')\pf^{1/p}\,. \]
The equivalence of $\|\cdot\|_{a,b}$ with the Euclidean distance implies that 
\begin{equation}\label{eq:equivalenceWp}
\frac{1}2\min(1,a) \W_p^2 \ \leqslant \ \frac12 \W_{p,a,0}^2 \ \leqslant \ \W_{p,a,b}^2 \ \leqslant \ \frac32 \W_{p,a,0}^2 \ \leqslant \ \ \frac{3}{2}\max(1,a)\W_p^2\,.
\end{equation}
From this equivalence and Proposition~\ref{prop:contraction}, it is then straightforward to obtain the following, which proves Theorem~\ref{thm_main:Wpcontract}.

 \begin{cor}\label{Cor:Wpcontract}
 Under the assumptions and with the notations of Proposition~\ref{prop:contraction},
  for all $p\geqslant 1$ and all $\nu,\mu\in\mathcal P_p(\R^{2d})$, 
 \begin{eqnarray}\label{Eq:W2contract}
 \mathcal W_{p,a,b}^2\po \nu P,\mu P\pf & \leqslant  & (1-\delta\kappa) \mathcal W_{p,a,b}^2\po \nu ,\mu \pf\,,
 \end{eqnarray}
 and for all $n\in \N$,
 \[\W_p^2\po \nu P^n,\mu P^n\pf \ \leqslant \ \nv{3\max\po L,\frac1L\pf}  (1-\delta \kappa)^{n} \W_p^2(\nu,\mu)\,.\]
 Moreover, $P$ admits a unique invariant probability distribution $\pi_\delta$, and $\pi_\delta\in  \mathcal P_p(\R^{2d})$ for all $p\geqslant 1$. 
 \end{cor}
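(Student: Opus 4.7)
My plan is to derive the corollary directly from the pathwise estimate of Proposition~\ref{prop:contraction} by combining three ingredients: the parallel coupling \eqref{Eq:couplage_parallele}, an optimal coupling for the modified Wasserstein distance $\W_{p,a,b}$, and the norm equivalence~\eqref{eq:equivalenceWp}.

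First I would establish the one-step contraction~\eqref{Eq:W2contract}. Given $\nu,\mu \in \mathcal P_p(\R^{2d})$, I would pick an optimal coupling $(Z_0, \tilde Z_0)$ of $(\nu,\mu)$ for $\W_{p,a,b}$, which exists because $\|\cdot\|_{a,b}$ is equivalent to the Euclidean norm (see the discussion at the end of Section~2.3). Letting $(Z_1, \tilde Z_1)$ denote the result of one step of the parallel coupling starting from $(Z_0, \tilde Z_0)$, Proposition~\ref{prop:contraction} yields the pathwise almost-sure inequality $\|Z_1 - \tilde Z_1\|_{a,b}^2 \leqslant (1-\delta\kappa)\|Z_0 - \tilde Z_0\|_{a,b}^2$. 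Raising both sides to the power $p/2$ and taking expectations gives
\[\W_{p,a,b}^p(\nu P, \mu P) \leqslant \E\bigl[\|Z_1 - \tilde Z_1\|_{a,b}^p\bigr] \leqslant (1-\delta\kappa)^{p/2}\W_{p,a,b}^p(\nu,\mu)\,,\]
which is \eqref{Eq:W2contract}. Iterating with the Markov property gives the $n$-step bound in $\W_{p,a,b}$, and then~\eqref{eq:equivalenceWp} converts it into the stated bound in $\W_p$ with the constant $3\max(L, 1/L)$.

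Next I would deduce existence and uniqueness of $\pi_\delta$ from Banach's fixed-point theorem on the complete metric space $(\mathcal P_p(\R^{2d}), \W_p)$ (recalled in Section~2.3). The one-step contraction in the equivalent norm shows $P$ is strictly contractive, so I only need to check that $P$ maps $\mathcal P_p$ into itself. For this, I would pick any $z_0 \in \R^{2d}$: the explicit formula~\eqref{Eq:ABOBA} shows that $\delta_{z_0}P$ has finite $p$-th moments (it is a simple affine function of two independent Gaussians), and then for $\nu \in \mathcal P_p$ the triangle inequality
\[\W_p(\nu P, \delta_{z_0}) \leqslant \W_p(\nu P, \delta_{z_0} P) + \W_p(\delta_{z_0} P, \delta_{z_0}) \leqslant \sqrt{3\max(L,1/L)}\,\W_p(\nu, \delta_{z_0}) + \W_p(\delta_{z_0}P, \delta_{z_0})\]
is finite. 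Banach's theorem then produces a unique $\pi_\delta \in \mathcal P_p(\R^{2d})$ with $\pi_\delta P = \pi_\delta$.

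Finally, to check that the fixed point lies in $\mathcal P_p$ for \emph{every} $p \geqslant 1$, I would run the same argument separately in each space $\mathcal P_p$, obtaining a fixed point $\pi_\delta^{(p)} \in \mathcal P_p$. Since $\W_p$-convergence implies $\W_1$-convergence, each $\pi_\delta^{(p)}$ is also a fixed point in $\mathcal P_1$, hence by uniqueness in $\mathcal P_1$ they all coincide with $\pi_\delta$, so $\pi_\delta \in \mathcal P_p(\R^{2d})$ for all $p \geqslant 1$. I do not anticipate any real obstacle here: the nontrivial work was already done in Proposition~\ref{prop:contraction}, and the remaining steps are the standard lift of a pathwise contraction to Wasserstein distances and a routine Banach fixed-point argument.
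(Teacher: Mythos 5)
Your argument is correct and follows the same route as the paper's: lift the pathwise contraction of Proposition~\ref{prop:contraction} through an optimal coupling (you correctly use a $\W_{p,a,b}$-optimal one), convert via the norm equivalence \eqref{eq:equivalenceWp}, and conclude by the Banach fixed-point theorem, with your extra checks that $P$ preserves $\mathcal P_p$ and that the fixed points in the different spaces $\mathcal P_p$ agree being exactly the routine details the paper elides. One small inaccuracy worth fixing: $v_1$ in \eqref{Eq:ABOBA} is \emph{not} an affine function of $(G,G')$ when $U$ is non-quadratic (because of the $\nabla U(x_1)$ term), but $\delta_{z_0}P$ still has all polynomial moments since \refsmooth{} gives $|\nabla U(x_1)|\leqslant L|x_1-x_\star|$, which grows at most linearly in $G$.
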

 
 \begin{proof}
 Consider the parallel coupling \eqref{Eq:couplage_parallele}  with $\po (x_0,v_0),(y_0,w_0)\pf$ a $\W_p$-optimal coupling  of $\nu $ and $\mu$ (which exists from  \cite[Corollary 5.22]{VillaniOldNew}). Then $z_n=(x_n,v_n)\sim \nu P^n$ and $z_n'=(y_n,w_n)\sim \mu P^n$, so that
 \[ \mathcal W_{p,a,b}^p\po \nu P^n,\mu P^n\pf \  \leqslant  \ \mathbb E \po \|z_n-z_n'\|_{a,b}^p \pf \ \leqslant \ (1-\delta\kappa)^{np/2}\mathbb E \po \|z_0-z_0'\|_{a,b}^p \pf \ = \ (1-\delta\kappa)^{np/2} \mathcal W_{p,a,b}^p\po \nu  ,\mu  \pf \,.\]
 Conclusion follows from the distance equivalence \eqref{eq:equivalenceWp} and the Banach fixed point theorem (recall $(\mathcal P_p(\R^{2d}),\W_p)$ is a Banach space,  \cite[Theorem 6.18]{VillaniOldNew}).
 \end{proof}
  
  \begin{rmq}
The one-step contraction \eqref{Eq:W2contract} implies that $P$ has a positive Wasserstein curvature $\delta\kappa/2$ in the sense of \cite{Ollivier,Joulin} with respect to the metric $\|\cdot\|_{a,b}$.
\end{rmq}
 
\subsection{Wasserstein/total variance regularization}\label{Sec:proofWTV}

While the proof of \cite[Corollary 4.7]{GuillinWang}  for the continuous-time process relies on functional inequalities arguments, our proof will be a direct application of optimal coupling for Gaussian variables, in the spirit of \cite{durmus2019,MadrasSezer}. 

As can be seen by conditioning with respect to the initial condition (distributed according to an optimal $\W_1$ coupling), Proposition~\ref{prop_main:W/TV} is an immediate corollary of the following.

\begin{prop}\label{prop:W/TV}
Under Assumption~\refsmooth, for all $(x_1,v_1),(x_2,v_2)\in\R^{2d}$,
\[\| P \po (x_1,v_1),\cdot\pf - P\po (x_2,v_2),\cdot\pf \|_{TV} \ \leqslant \ \frac{3\delta|v_1-v_2| + \po 2 + 3\delta^2 L/2\pf |x_1-x_2|}{\delta\sqrt{2\pi(1-\eta^2)}}\,.\]
\end{prop}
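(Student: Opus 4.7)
The plan is to exploit a clean conditional-Gaussian structure of the OBABO transition and combine two classical Gaussian TV estimates. For $i=1,2$, set $A_i := x_i + \delta \eta v_i - \tfrac{\delta^2}{2}\na U(x_i)$. From the transition~\eqref{Eq:ABOBA}, under $P((x_i, v_i), \cdot)$ the $x$-marginal of the output is the Gaussian $\mathcal N(A_i, \delta^2(1-\eta^2) I_d)$. Substituting $G = (X - A_i)/(\delta \sqrt{1-\eta^2})$ into the formula for the updated velocity shows that, conditional on the output position being $X = x^*$, the updated velocity is Gaussian with law $\mathcal N(\mu_i(x^*), (1-\eta^2) I_d)$, where
\[\mu_i(x^*) \ = \ \eta^2 v_i + \frac{\eta}{\delta}(x^* - A_i) - \frac{\delta\eta}{2}\po \na U(x_i) + \na U(x^*) \pf.\]

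Given this factorisation of the output law as marginal-times-conditional, I would apply the subadditivity inequality
\[\|P((x_1,v_1),\cdot) - P((x_2,v_2),\cdot)\|_{TV} \ \leqslant \ \|\mathcal N(A_1,\Sigma) - \mathcal N(A_2,\Sigma)\|_{TV} + \sup_{x^*}\|\mathcal N(\mu_1(x^*),\Sigma') - \mathcal N(\mu_2(x^*),\Sigma')\|_{TV},\]
with $\Sigma = \delta^2(1-\eta^2) I_d$ and $\Sigma' = (1-\eta^2) I_d$, which itself follows from a two-stage maximal coupling (first of the $x$-marginals, then, on the equality event, of the conditional laws). Each term being a TV distance between two Gaussians of the same covariance, I would bound it via the elementary estimate $\|\mathcal N(m_1,\sigma^2 I_d) - \mathcal N(m_2,\sigma^2 I_d)\|_{TV} \leqslant 2|m_1-m_2|/(\sigma\sqrt{2\pi})$, obtained by projecting onto the direction $m_1-m_2$ and bounding $2\Phi-1$ linearly. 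The whole problem then reduces to controlling $|A_1-A_2|$ and $|\mu_1(x^*) - \mu_2(x^*)|$.

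From Assumption~\refsmooth\ one immediately gets $|A_1 - A_2| \leqslant (1 + \delta^2 L/2)|x_1-x_2| + \delta\eta|v_1-v_2|$. The key step of the proof is the identity
\[\mu_1(x^*) - \mu_2(x^*) \ = \ -\frac{\eta}{\delta}(x_1-x_2),\]
valid for \emph{every} $x^*$: substituting the definition of $A_i$ in $\mu_i(x^*)$, the explicit $\eta^2 v_i$ terms cancel against the $\eta^2 v_i$ hidden inside $-\eta A_i/\delta$, and the two $\na U(x_i)$ contributions cancel likewise ($-\tfrac{\delta\eta}{2}\na U(x_i)$ against $\tfrac{\eta}{\delta}\cdot\tfrac{\delta^2}{2}\na U(x_i)$). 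This cancellation is the only non-trivial point in the argument; it reflects the palindromic (time-reversible up to velocity flip) B--A--B Verlet core of the scheme, which makes the complicated $x^*$-dependent and $v_i$-dependent pieces of the conditional mean collapse to a single simple term. Inserting the two mean bounds into the subadditivity inequality and using $\eta\leqslant 1$ then yields an estimate of the announced form.
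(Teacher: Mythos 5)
Your proposal is correct and uses essentially the same two-stage Gaussian coupling as the paper's proof: couple the output positions first via the noise $G$, then couple the output velocities conditionally via $G'$. Phrasing this through the subadditivity inequality for the total variation of measures factored as marginal-times-kernel is just a clean repackaging of the explicit coupling the paper constructs (couple $x_1',x_2'$ maximally, then on $\{x_1'=x_2'\}$ couple $v_1',v_2'$ maximally), so the decomposition is identical.

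The one place where you go beyond the paper is the complete algebraic cancellation $\mu_1(x^*)-\mu_2(x^*)=-\tfrac{\eta}{\delta}(x_1-x_2)$. The paper's proof arrives at the same quantity under the event $\{x_1'=x_2'\}$, writing $c_1-c_2=\eta^2(v_1-v_2)-\tfrac{\delta\eta}{2}(\nabla U(x_1)-\nabla U(x_2))+\tfrac{\eta}{\delta}(b_2-b_1)$, but does not notice that on expanding $b_i$ the velocity and gradient pieces cancel exactly; it bounds the three terms separately to get $|c_1-c_2|\leqslant 2|v_1-v_2|+(1/\delta+\delta L)|x_1-x_2|$, which is strictly looser than your $|c_1-c_2|=\tfrac{\eta}{\delta}|x_1-x_2|\leqslant|x_1-x_2|/\delta$. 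Carrying your observation through gives a numerator of roughly $\delta|v_1-v_2|+(2+\delta^2L/2)|x_1-x_2|$ instead of the stated $3\delta|v_1-v_2|+(2+3\delta^2L/2)|x_1-x_2|$, so you establish a slightly sharper version of the Proposition; in particular the velocity coefficient no longer depends on surviving the first coupling, reflecting (as you say) the reversibility structure of the Verlet core. The argument is sound.
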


\begin{proof}
As already mentioned, for a fixed $(x,v)$, $(x',v') \sim P((x,v),\cdot)$ is not a Gaussian variable. Nevertheless, we are going to use successively two Gaussian couplings, coupling first the positions thanks to the random variable $G$ and then the velocities with the random variable $G'$. In particular, if the first coupling is a success, then the non-Gaussian part $\na U(x')-\na U(y')$ vanishes. This two steps coupling (position first, then velocity) is natural in view of the hypoelliptic structure of the Langevin diffusion. Note that the choice of the OBABO sampler is particularly convenient here, since the damping part is splited in two half-steps, contrary to e.g. the BAOAB scheme \cite{Leimkuhler} (although one would just have to consider two transitions of the Markov chain).

Recall the following result for the optimal coupling of Gaussian distributions. For $x,y\in\R^d$ and $\sigma>0$,
\begin{equation*} 
\|\mathcal N(x,\sigma^2 \Id) - \mathcal N(y,\sigma^2 \Id)\|_{TV} \ \leqslant \ \frac{|x-y|}{\sqrt{2\pi\sigma^2}}\,.
\end{equation*}
Indeed, orthogonal coordinates being independent, we just have to couple the projections on $x-y$, so that the $d$ dimensional case boils down to the one dimensional case, which is \cite[Lemma 15]{MadrasSezer}. 
 
Let $(x_1,v_1),(x_2,v_2)\in\R^d\times\R^d$. Denote 
\[b_i = x_i+\delta \eta v_i - \delta^2/2\na U(x_i)\]
 for $i=1,2$,  and let $(x_1',x_2')$ be an optimal coupling of $\mathcal N(b_i,\delta^2(1-\eta^2)\Id)$, $i=1,2$. Set $G_i = (x_i' - b_i)/(\delta \sqrt{1-\eta^2})$ for $i=1,2$. In other words, $G_1$ and $G_2$ are both standard Gaussian random variables and such that
\begin{equation}\label{xi'}
x_i' \ = \ b_i + \delta \sqrt{1-\eta^2} G_i\,,\qquad i=1,2
\end{equation} 
and
\begin{equation}\label{pxi'}
2\mathbb P \po  x_1' \neq x_2'\pf \ \leqslant \   \frac{|b_1-b_2|}{\delta \sqrt{2\pi(1-\eta^2)}}\,.
\end{equation}
Conditionally to $(x_1',x_2')$, denote 
\[c_i \ = \ \eta^2v_i - \frac{\delta\eta }{2}\po \na U(x_i) + \na U(x_i')\pf + \sqrt{1-\eta^2}   \eta G_i  \]
and let $(v_1',v_2')$ be an optimal coupling of $\mathcal N(c_i,(1-\eta^2)\Id)$, $i=1,2$, so that
\begin{equation}\label{pvi'}
2\mathbb P \po  v_1' \neq v_2'\ |\ (x_1',x_2')\pf \ \leqslant \   \frac{|c_1-c_2|}{\sqrt{2\pi(1-\eta^2)}}\,.
\end{equation}
Set $G_i' = (v_i'-c_i)/\sqrt{1-\eta^2}$ for $i=1,2$, so that $G_1'$ and $G_2'$ are both   standard Gaussian random variables such that 
\begin{equation}\label{vi'}
v_i' \ = \ c_i + \sqrt{1-\eta^2} G_i'\,,\qquad i=1,2.
\end{equation}
The fact that, conditionally to $(x_1',v_1')$, $G_1'$ is a standard Gaussian random variable, implies that $G_1'$ is independent from $(G_1,G_2)$ (and similarly for $G_2'$). As a consequence, \eqref{xi'} and \eqref{vi'} with $G_1$ and $G_1'$ (resp. $G_2$ and $G_2'$) that are two independent standard Gaussian variables proves that $(x_i',v_i')\sim P\po (x_i,v_i),\cdot\pf$ (recall the transition \eqref{Eq:ABOBA} corresponding to $P$). In particular,
\[\|\delta_{(x_1,v_1)} P - \delta_{(x_2,v_2)} P\|_{TV} \ \leqslant \ 2 \mathbb P \po (x_1',v_1')\neq(x_2',v_2')\pf\,.\]

We now bound this probability. Remark that
\[x_1'=x_2' \qquad \Leftrightarrow \qquad \sqrt{1-\eta^2}(G_1-G_2) = \frac{b_2-b_1}{\delta}\,.\]
Hence, under the event $\{x_1'=x_2'\}$, 
\[c_1-c_2 = \eta^2(v_1-v_2) - \frac{\delta\eta }{2}\po \na U(x_1) - \na U(x_2)\pf + \frac{\eta}{\delta}(b_2-b_1)\,. \]
Since
\[|b_1-b_2| \ \leqslant \ \po 1+\delta^2L/2\pf |x_1-x_2| + \delta |v_1-v_2|\,,\]
then, under $\{x_1'=x_2'\}$,
\[|c_1-c_2| \ \leqslant \ 2|v_1-v_2| + \po 1/\delta + \delta L\pf |x_1-x_2|\,.\]
As a conclusion, using  \eqref{pxi'} and \eqref{pvi'},
\begin{eqnarray*}
2\mathbb P \po (x_1',v_1')\neq(x_2',v_2')\pf & \leqslant & 2\mathbb P \po x_1'\neq x_2'\pf + 2\mathbb P \po v_1'\neq v_2'\ |\ x_1'=x_2'\pf\\
& & \\
& \leqslant & \frac{3\delta|v_1-v_2| + \po 2 + 3\delta^2 L/2\pf |x_1-x_2|}{\delta\sqrt{2\pi(1-\eta^2)}}\,.
\end{eqnarray*}
\end{proof}

\subsection{Concentration inequality}

We first state the following general result (proven in \cite[Theorem 1]{MalrieuTalay}  and  \cite[Corollary 3.5]{ChafaiMalrieuParoux} in particular cases, see also  \cite[Lemma 3.2]{MonmarchePDMP}).
\begin{thm}\label{thm:logSobgeneral}
Let $R$ be a Markov operator on $\R^d$ with invariant measure $\nu$. Suppose that there exists $r\geqslant 0$ such that, for all bounded Lipschitz function $\varphi$ and all $z\in\R^d$,
\[|\na R\varphi(z)| \ \leqslant \ r R|\na \varphi|(z).\]
Suppose moreover that there exists $C>0$ such $R(z,\cdot)$ satisfies a log Sobolev inequality with constant $C$ for all $z\in\R^d$. Then:
\begin{enumerate}
\item If $\mu\in\mathcal P(\R^d)$ satisfies a log Sobolev with constant $C'$, then $\mu R$ satisfies a log Sobolev inequality with constant $r^2C'+C$.
\item If $r<1$, then for all $n\in\N$ and all $z\in \R^d$, $R^n(z,\cdot)$ satisfies a log Sobolev inequality with constant $C(1-r^{2n})/(1-r^2)$, and $\nu$ satisfies a log Sobolev inequality with constant $C/(1-r^2)$.
\item If $r<1$, $\mu\in\mathcal P(\R^d)$ satisfies a log Sobolev with constant $C'$ and $(Z_n)_{n\in\N}$ is a Markov chain associated to $R$ with initial condition $Z_0\sim\mu$, then for all $n\in\N_*$, all $u\geqslant 0$,  and all $1$-Lipschitz functions $\varphi$,
\begin{eqnarray*}
\mathbb{P}\po \frac1n\sum_{k=1}^n  \po \phi(Z_k) - \mathbb E \po \phi(Z_k)\pf\pf \geqslant u\pf & \leqslant &   \exp \po - \frac{n u^2(1-r)^2}{C+rC'/n}\pf
\end{eqnarray*}
and
\[\left| \frac1n\sum_{k=1}^n    \mathbb E \po \phi(Z_k)\pf  - \nu(\phi)\right| \ \leqslant \ \frac{r}{n(1-r)}\W_1\po \nu,\mu\pf\,.\]
\end{enumerate}

\end{thm}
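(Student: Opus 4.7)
The plan is to treat the three items in order, each building on the previous one, using the standard entropy–decomposition and Herbst machinery adapted to the Markov chain setting.

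For item 1, the starting point is the classical conditional decomposition of entropy: for $f=\varphi^2>0$,
\[
\mathrm{Ent}_{\mu R}(f)\ =\ \int \mathrm{Ent}_{R(z,\cdot)}(f)\, \mu(\mathrm dz) + \mathrm{Ent}_\mu(Rf)\,.
\]
The first term is bounded by $C\int|\nabla\varphi|^2\,\mathrm d(\mu R)$ by applying LSI$(C)$ to $R(z,\cdot)$ and integrating over $\mu$. For the second term, I apply LSI$(C')$ for $\mu$ to $g=\sqrt{R\varphi^2}$, so that $|\nabla g|^2 = |\nabla R\varphi^2|^2/(4R\varphi^2)$. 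The gradient bound applied to $\varphi^2$ gives $|\nabla R\varphi^2|\leqslant 2rR(|\varphi||\nabla\varphi|)$, and a Cauchy–Schwarz at the kernel level,
\[
R(|\varphi||\nabla\varphi|)^2 \ \leqslant\ R(\varphi^2)\,R(|\nabla\varphi|^2)\,,
\]
produces $|\nabla R\varphi^2|^2/R\varphi^2 \leqslant 4r^2 R(|\nabla\varphi|^2)$, hence $\mathrm{Ent}_\mu(R\varphi^2)\leqslant r^2C'\int|\nabla\varphi|^2\,\mathrm d(\mu R)$. Adding the two contributions yields LSI$(C+r^2C')$ for $\mu R$.

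For item 2, I iterate item 1 with $\mu_0=\delta_z$, which trivially satisfies LSI with constant $0$. The constants obey the recursion $C_{n+1}=C+r^2 C_n$ with $C_0=0$, giving $C_n = C(1-r^{2n})/(1-r^2)$ for $R^n(z,\cdot)$. For $\nu$, the gradient contraction implies a $\W_1$ contraction with rate $r$ (iterating $|\nabla R^n\varphi|\leqslant r^n R^n|\nabla\varphi|$ and using Kantorovich–Rubinstein duality), so $R^n(z,\cdot)\to\nu$ weakly and the uniformly bounded LSI constants pass to the limit, giving $\nu\in\text{LSI}(C/(1-r^2))$.

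For item 3, I split $S_n-\mathbb E(S_n) = A+B$, with $A=S_n-\mathbb E(S_n\mid Z_0)$ and $B=\mathbb E(S_n\mid Z_0)-\mathbb E(S_n)$. Using the Poisson-equation identity
\[
\varphi(Z_k)-R^k\varphi(Z_0) \ =\ \sum_{j=0}^{k-1}\bigl(R^{k-j-1}\varphi(Z_{j+1})-R^{k-j}\varphi(Z_j)\bigr)\,,
\]
I rewrite $nA$ as a martingale sum $\sum_{j=0}^{n-1}\bigl(\psi_{n-j}(Z_{j+1})-R\psi_{n-j}(Z_j)\bigr)$, where $\psi_m=\sum_{i=0}^{m-1}R^i\varphi$ is $(1-r)^{-1}$-Lipschitz by the gradient contraction. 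Conditionally on $Z_j$, each increment has mean zero and Herbst applied to LSI$(C)$ for $R(Z_j,\cdot)$ bounds its conditional cumulant by $\lambda^2 C/(4(1-r)^2)$; iterating the tower property gives $\log\mathbb E(e^{\lambda A})\leqslant \lambda^2 C/(4n(1-r)^2)$. The term $B$ is handled by noting $\mathbb E(S_n\mid Z_0)=\frac1n\sum_k R^k\varphi(Z_0)$ is $r(1-r^n)/[n(1-r)]\leqslant r/[n(1-r)]$-Lipschitz in $Z_0$, and Herbst applied to LSI$(C')$ gives $\log\mathbb E(e^{\lambda B})\leqslant \lambda^2 C'r^2/(4n^2(1-r)^2)$. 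Adding the two bounds and optimizing in $\lambda$ yields the stated sub-Gaussian tail. The mean drift $\left|\frac1n\sum_k \mathbb E(\varphi(Z_k))-\nu(\varphi)\right|$ is estimated separately by applying Kantorovich–Rubinstein to the $r^k$-Lipschitz function $R^k\varphi$ and summing the geometric series.

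The main technical obstacle is in item 3, where the correct Lipschitz constant of the ``corrector'' $\psi_m$ must be tracked carefully; the saving factor $(1-r)^{-1}$ rather than $(1-r^2)^{-1/2}$ reflects the $\ell^1$ summation of the Lipschitz norms of $R^i\varphi$, and naively applying a tensorized LSI to the joint law of $(Z_1,\dots,Z_n)$ gives a worse constant. A secondary issue is the passage to the limit in item 2, which is why $r<1$ is needed: without the strict contraction there is no reason for the iterates to converge to $\nu$, and the supremum of the geometric series in the LSI constants diverges.
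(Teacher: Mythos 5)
Your proof is correct and follows essentially the same route as the paper's: item 1 via the chain rule for entropy, the change of function $g=\sqrt{R\varphi^2}$, kernel-level Cauchy--Schwarz, and the gradient subcommutation; item 2 by iteration and passage to the weak limit; item 3 via Herbst's argument exploiting the Markov property and the $\ell^1$ summability of the Lipschitz constants $r^k$ of $R^k\varphi$. The only cosmetic difference is in item 3, where you package the backward iteration as a martingale decomposition $A+B$, whereas the paper unrolls the same computation directly by repeatedly conditioning on the last state; the constants and the reasoning are the same (and your version is even very slightly sharper, giving $r^2C'/n$ where the paper's more generous intermediate bound leaves $rC'/n$).
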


\begin{rmq}\label{rmq:logSobLip}
The first point generalizes the fact that log-Sobolev inequalities are transported by Lipschitz transformations, which corresponds to the case $R\phi = \phi \circ\Psi$ with $\Psi$ an $r$-Lipschitz function (in that case, $C=0$).
\end{rmq}

\begin{proof}
By density, we can consider a smooth positive Lipschitz $f$. Denoting $g = \sqrt{\nv{R} f^2}$, we decompose
\begin{eqnarray*}
\mu \nv{R} \po f^2 \ln f^2\pf - \mu \nv{R}(f^2) \ln \mu \nv{R}(f^2) &=  & \mu \po \nv{R}(f^2\ln f^2) - \nv{R}(f^2)\ln \nv{R} (f^2)\pf \\
& & \qquad +\  \mu(g^2 \ln g^2) - \mu(g^2) \ln \mu(g^2)\\
& \leqslant & C \mu \nv{R}\po |\na f|^2\pf + C'\mu \po |\na g|^2\pf \,.
\end{eqnarray*}
Now, by the Cauchy-Schwarz inequality,
\[|\na g|^2 \ = \ \frac{  |\na \nv{R} (f^2)| ^2}{4\nv{R} (f^2)} \ \leqslant  \ \frac{r^2\po \nv{R}|\na (f^2)|\pf^2}{4\nv{R} (f^2)} \ \leqslant  \ \frac{r^2}4 \nv{R} \po \frac{ |\na (f^2)|^2}{f^2} \pf \ = \  r^2 \nv{R}\po |\na f|^2\pf\,, \]
which concludes the proof of the first claim. The second claim immediately follows by induction and the weak convergence of $\nv{R}^n$ toward $\nu$, which follows from the Wasserstein contraction implied by the gradient/operator subcommutation, see \cite[Theorem 2.2]{Kuwada}.

Concerning the third claim, if $\mu\in\mathcal P(\R^d)$ satisfies a log-Sobolev inequality with constant $C'>0$ then, for all $\beta$-Lipschitz $\varphi$ and all $\lambda$,
\begin{equation}\label{eq:logSobexpo}
\mu \po  e^{\lambda\varphi}\pf \ \leqslant \ e^{C'\beta^2\lambda^2 /4} e^{\mu(\varphi)}\,,
\end{equation}
see e.g. \cite{LedouxConcentration}. Using successively Chernov's inequality, the Markov property and \eqref{eq:logSobexpo} together with the local log-Sobolev inequality satisfied by $R$, given any $1$-Lipschitz $\phi$, $\lambda> 0$ and $t\in\R$, 
\begin{eqnarray*}
\mathbb{P}\po \frac1n\sum_{k=1}^n \phi(Z_k) \geqslant t\pf & \leqslant & e^{-nt\lambda}\mathbb E \po e^{\lambda\sum_{k=1}^n \phi(Z_k)}\pf\\
& = &  e^{-nt\lambda}\mathbb E \po e^{\lambda\sum_{k=1}^{n-1} \phi(Z_k)}R \po e^{\lambda \phi}\pf(Z_{n-1})\pf \\
& \leqslant & e^{-nt\lambda+C\lambda^2/4}\mathbb E \po e^{\lambda\sum_{k=1}^{n-2} \phi(Z_k)}  e^{\lambda (\phi+R\phi)(Z_{n-1})} \pf \\
& = & e^{-nt\lambda+C\lambda^2/4}\mathbb E \po e^{\lambda\sum_{k=1}^{n-2} \phi(Z_k)}  R\po e^{\lambda (\phi+R\phi)}\pf(Z_{n-2}) \pf 
\end{eqnarray*}
Using that $\phi+R\phi$ is $(1+r)$-Lipschitz, \eqref{eq:logSobexpo} and then a straightforward induction, we get
\begin{eqnarray*}
\mathbb{P}\po \frac1n\sum_{k=1}^n \phi(Z_k) \geqslant t\pf & \leqslant &  e^{-nr\lambda+C\lambda^2\co 1+ (1+r)^2\cf  /4}\mathbb E \po e^{\lambda\sum_{k=1}^{n-2} \phi(Z_k)}  e^{\lambda (R\phi+R^2\phi)(Z_{n-2})} \pf \\
 &\leqslant &  e^{-nt\lambda+nC\lambda^2  /(4(1-r)^2)}\mathbb E \po e^{\lambda\sum_{k=1}^{n} R^k\phi(Z_0)} \pf \\
  &\leqslant &  e^{-nt\lambda+(nC+rC')\lambda^2  /(4(1-r)^2)} e^{\lambda\sum_{k=1}^{n} \mu R^k\phi}  .
\end{eqnarray*}
Applying this with $t = \sum_{k=1}^{n} \mu R^k\phi + u$ with $u\geqslant 0$ reads
\begin{eqnarray*}
\mathbb{P}\po \frac1n\sum_{k=1}^n \phi(Z_k) - \frac1n\sum_{k=1}^n \mu R^k \phi \geqslant u\pf & \leqslant & \inf_{\lambda>0}  \exp\po -n u\lambda+\frac{nC+rC'}{4(1-r)^2}\lambda^2  \pf \\
& = & \exp \po - \frac{n u^2(1-r)^2}{C+rC'/n}\pf\,.
\end{eqnarray*}
Using that $\nu$ is invariant by $R$, that $\sum_{k=1}^n R^k\phi$ is a $r/(1-r)$-Lipschitz function for all $n\geqslant 1$ and that the dual representation of  $\W_1$ (see e.g. \cite{VillaniOldNew}), 
\[\left|\frac1n \sum_{k=1}^n \mu R^k \phi - \nu \phi\right| \ = \ \frac1n  \left|(\mu-\nu) \po \sum_{k=1}^n R^k \phi\pf\right| \ \leqslant \ \frac{r}{n(1-r)}\W_1(\nu,\mu)\,. \]
\end{proof}

In the rest of this section, considering $a,b$ given by Proposition~\ref{prop:contraction},  we consider the matrix $S$ such that $S(x,v) = (x+bv,\sqrt{a-b^2}v)$. Hence, $\|(x,v)\|_{a,b} = |S(x,v)|$. We also denote $P_{a,b}$ the transition operator of $(S(x_n,v_n))_{n\in\N}$ where $(x_n,v_n)_{n\in\N}$ is an OBABO chain. This change of variable is meant for simplicity, in order to work with the classical Euclidean metric and gradient in the following.

\begin{lem}\label{lem:PablogSob}
Under the conditions of Proposition~\ref{prop:contraction}, for all Lipschitz functions $\varphi$ on $\R^{2d}$ and all $z\in\R^{2d}$,
\[|\na P_{a,b} \varphi(z)| \ \leqslant  \ \sqrt{1 - \delta \kappa}   P_{a,b}|\na \varphi|(z)\,.\]
Moreover, for all $z\in \R^{2d}$, $P_{a,b}(z,\cdot)$ satisfies a log-Sobolev inequality with constant 
\[C \ = \ 2|S|^2(1-\eta^2)\po 1 + \delta + \delta^2   L/2\pf^2  \,.\]
\end{lem}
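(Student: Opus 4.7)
The plan is to prove the two claims separately, both leveraging the fact that $S$ turns $\|\cdot\|_{a,b}$ into the standard Euclidean norm. For the first (gradient subcommutation), the key input is Proposition~\ref{prop:contraction}: for any fixed realization of the noise $(G,G')$, the map $z\mapsto \Theta(z,G,G')$ is pathwise $\sqrt{1-\delta\kappa}$-Lipschitz in $\|\cdot\|_{a,b}$. Rewritten in the coordinates $\tilde z=Sz$, the random map $\tilde F_{G,G'}(\tilde z)=S\,\Theta(S^{-1}\tilde z,G,G')$ is $\sqrt{1-\delta\kappa}$-Lipschitz for the Euclidean norm, pathwise. Under Assumption~\refsmooth\ it is differentiable in $\tilde z$ with $|D_{\tilde z}\tilde F_{G,G'}|_{\mathrm{op}}\leqslant\sqrt{1-\delta\kappa}$ almost surely. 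For a smooth compactly supported $\varphi$, differentiating under the expectation gives $\na P_{a,b}\varphi(\tilde z)=\E\co \po D_{\tilde z}\tilde F_{G,G'}\pf^{T}\na\varphi\po\tilde F_{G,G'}(\tilde z)\pf\cf$, whence $|\na P_{a,b}\varphi(\tilde z)|\leqslant \sqrt{1-\delta\kappa}\,P_{a,b}|\na\varphi|(\tilde z)$, and density extends the bound to general Lipschitz $\varphi$.

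For the local log-Sobolev inequality, I would use the classical Lipschitz transport principle (already recalled in Remark~\ref{rmq:logSobLip} and see e.g.~\cite{BakryGentilLedoux}): if $\mu$ satisfies a log-Sobolev inequality with constant $C_0$ and $T$ is $K$-Lipschitz, then $T\sharp\mu$ satisfies one with constant $K^2C_0$. For fixed $\tilde z$, $P_{a,b}(\tilde z,\cdot)$ is the pushforward of the standard Gaussian $\mathcal N(0,I_{2d})$ on $(G,G')$ by $T_{\tilde z}(G,G')=S\,\Theta(S^{-1}\tilde z,G,G')$. Since $\mathcal N(0,I_{2d})$ satisfies a log-Sobolev inequality with constant $2$, it suffices to show $|T_{\tilde z}|_{\mathrm{Lip}}^2\leqslant |S|^2(1-\eta^2)(1+\delta+\delta^2L/2)^2$ to obtain the stated $C$.

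Factoring $T_{\tilde z}=S\circ H_{\tilde z}$ with $H_{\tilde z}(G,G')=\Theta(S^{-1}\tilde z,G,G')$ gives $|T_{\tilde z}|_{\mathrm{Lip}}\leqslant|S|_{\mathrm{op}}|H_{\tilde z}|_{\mathrm{Lip}}$. The explicit formulas~\eqref{Eq:ABOBA} show that $x_1$ is affine in $G$ with Lipschitz constant $\delta\sqrt{1-\eta^2}$, while $v_1$ depends linearly on $G$ and $G'$ together with a nonlinear term $-(\delta\eta/2)\na U(x_1(G))$, which by Assumption~\refsmooth\ is $(\delta^2\eta L/2)\sqrt{1-\eta^2}$-Lipschitz in $G$. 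Combining these Lipschitz bounds for the two components of $H_{\tilde z}$ via the inequality $\sqrt{|a|^2+|b|^2}\leqslant|a|+|b|$ and Cauchy-Schwarz, together with $\eta\leqslant 1$, yields the required estimate $|H_{\tilde z}|_{\mathrm{Lip}}\leqslant\sqrt{1-\eta^2}(1+\delta+\delta^2L/2)$, and hence the stated $C$. The main obstacle is the careful bookkeeping in this last step, where the $\delta$, $\delta^2L/2$ and cross contributions must be grouped correctly to reach the claimed constant rather than a looser one.
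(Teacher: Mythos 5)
For the second claim (the local log-Sobolev inequality), your argument is essentially the one in the paper: identify $P_{a,b}(z,\cdot)$ as the pushforward of the standard Gaussian in $(G,G')$ by $\Psi:(g,g')\mapsto S\Theta(S^{-1}z,g,g')$, bound the Lipschitz constant of $\Psi$ using \refsmooth\ to control the only nonlinear piece $-(\delta\eta/2)\na U(\Theta_1)$, and then invoke Remark~\ref{rmq:logSobLip}. You reach the same constant. One caveat, which applies equally to the paper's own write-up: the transition from the pair of componentwise Lipschitz bounds to the stated overall constant $\sqrt{1-\eta^2}(1+\delta+\delta^2L/2)$ is not mechanical. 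Applying $\sqrt{|a|^2+|b|^2}\leqslant|a|+|b|$ to the two components gives $|\Theta-\tilde\Theta|\leqslant\sqrt{1-\eta^2}\big[(1+\delta+\delta^2L/2)|g-h|+|g'-h'|\big]$, and the subsequent Cauchy--Schwarz step produces the factor $\sqrt{(1+\delta+\delta^2L/2)^2+1}$, not $1+\delta+\delta^2L/2$; as $\delta\to 0$ the former tends to $\sqrt 2$ while the latter tends to $1$. You rightly flag this bookkeeping as the main obstacle, but the scheme you describe does not actually reach the stated constant and neither does the paper's displayed computation — so your proposal inherits, rather than repairs, that small slack.

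For the first claim (gradient subcommutation), you take a genuinely different, more hands-on route than the paper. The paper reads off from Proposition~\ref{prop:contraction} that $P_{a,b}$ is a $\sqrt{1-\delta\kappa}$-contraction in the Euclidean $\W_\infty$ distance and then cites \cite[Theorem 2.2]{Kuwada} for the duality between $\W_\infty$-contraction and the gradient/operator subcommutation. You instead observe that the pathwise Lipschitz bound from Proposition~\ref{prop:contraction} implies $|D_{\tilde z}\tilde F_{G,G'}|_{\mathrm{op}}\leqslant\sqrt{1-\delta\kappa}$ a.s., differentiate under the expectation for smooth $\varphi$, apply the chain rule and pass to Lipschitz $\varphi$ by density. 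This is more elementary and self-contained, requiring the (available) smoothness of the transition map rather than the black-box duality of Kuwada, at the cost of having to justify differentiation under the expectation and the density/a.e.-gradient technicality for non-smooth Lipschitz $\varphi$; the Kuwada route avoids both. Either works here, and the conclusions coincide.
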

\begin{proof}
Using the notations of Proposition~\ref{prop:contraction}, the latter means that,  setting $z_n=(x_n,v_n)$ and $z_n'=(y_n,w_n)$, 
\[|Sz_{1}-Sz_{1}'| \ \leqslant \ \sqrt{1-\delta\kappa}|Sz_0-Sz_0'|\]
almost surely for all $z_0,z_0'$. This implies that $P_{a,b}$ contracts the $\W_\infty$ distances (for the classical Euclidean metric) by a factor $\sqrt{1-\delta\kappa}$, which implies the claimed gradient/Markov operator subcommuation as proven in \cite[Theorem 2.2]{Kuwada}.

For $z=(x,v)\in\R^{2d}$, $P_{a,b}(z,\cdot)$ is the image of $\mathcal N(0,I_{2d})$ by the transformation $\Psi:(G,G')\mapsto S\Theta(S^{-1}z,G,G')$. For $(g,g'),(h,h')\in\R^{2d}$,
\[|\Theta_1(z,g,g')-\Theta_1(z,h,h')| \ = \  \delta  \sqrt{1-\eta^2} |g-h|   \]
and thus
\begin{eqnarray*}
|\Theta_2(z,g,g')-\Theta_2(z,h,h')|  & \leqslant &  \frac{\delta\eta }{2} L |\Theta_1(z,g,g')-\Theta_1(z,h,h')|+ \sqrt{2(1-\eta^2)} \po \eta |g-h| +  |g'-h'|\pf\\
 & \leqslant &  \frac{\delta\eta }{2} L \delta  \sqrt{1-\eta^2} |g-h| + \sqrt{1-\eta^2} \po \eta |g-h| +  |g'-h'|\pf.
\end{eqnarray*}
As a consequence, $\Psi$ is Lipschitz with constant $|S|\po 1 + \delta + \delta^2   L/2\pf \sqrt{1-\eta^2}$. 
 Since the standard Gaussian distribution satisfies a log-Sobolev inequality with constant $2$, conclusion follows from Remark~\ref{rmq:logSobLip}.
\end{proof}

\begin{proof}[Proof of Theorem~\ref{thm_main:concentration}]
From Theorem~\ref{thm_main:Wpcontract},  $\tilde \pi = S\sharp \pi_\delta $ is the unique invariant measure of $P_{a,b}$. From Lemma~\ref{lem:PablogSob} and Theorem~\ref{thm:logSobgeneral}, $\tilde \pi$ satisfies a log-Sobolev inequality with constant $ C/(\delta\kappa)$.  From Remark~\ref{rmq:logSobLip}, $\pi_\delta = S^{-1}\sharp \tilde \pi $ then satisfies a log-Sobolev inequality with constant $|S^{-1}|^2 C/(\delta\kappa)$. Besides, $|S||S^{-1}|= K_1$ and $(1-\eta^2)/\delta \leqslant \gamma$, which gives the claimed expression for the log-Sobolev constant of $\pi_\delta$.

For $\phi$ a $1$-Lipschitz function, $\tilde \phi:=(\phi\circ S^{-1})/|S^{-1}|$ is $1$-Lipschitz. If $\mu\in\mathcal P(\R^{2d})$ satisfies a log-Sobolev inequality with constant $C'$ then $\mu \circ S$ satisfies an inequality with constant $|S|^2C'$. Hence, with $Z_0\sim \mu$, Theorem~\ref{thm:logSobgeneral}  applied to $P_{a,b}$ reads
\begin{eqnarray*}
\mathbb{P}\po \frac1n\sum_{k=1}^n  \po \phi(Z_k) - \mathbb E \po \phi(Z_k)\pf\pf \geqslant u\pf & = & \mathbb{P}\po \frac1n\sum_{k=1}^n  \po \tilde \phi(SZ_k) - \mathbb E \po \tilde\phi(SZ_k)\pf\pf \geqslant \frac{u}{|S^{-1}|}\pf \\ 
 & \leqslant &   \exp \po - \frac{n u^2(1-r)^2}{|S^{-1}|^2 \po C+r|S|^2C'/n\pf}\pf\,,
\end{eqnarray*}
with $r=\sqrt{1-\delta\kappa}\leqslant 1 -\delta\kappa/2$. Applying this to $\phi$ and $-\phi$ gives
\begin{eqnarray*}
\mathbb{P}\po \left|\frac1n\sum_{k=1}^n  \po \phi(Z_k) - \mathbb E \po \phi(Z_k)\pf\pf\right| \geqslant u\pf & \leqslant & 2 \exp \po - \frac{n u^2\delta^2 \kappa^2}{4|S^{-1}|^2 \po C+|S|^2C'/n\pf}\pf\,.
\end{eqnarray*}
Applying the last proposition of Theorem~\ref{thm:logSobgeneral} to $P_{a,b}$, conclusion follows from
\begin{eqnarray*}
\left| \frac1n\sum_{k=1}^n    \mathbb E \po \phi(Z_k)\pf  - \pi_{\delta}(\phi)\right|  & = & |S^{-1}| \left| \frac1n\sum_{k=1}^n    \mathbb E \po \tilde \phi( SZ_k)\pf  - \pi_{\delta}(\tilde \phi \circ S)\right| \\
& \leqslant & \frac{2|S^{-1}|}{n\kappa\delta}\W_1(S\sharp \mu, S\sharp  \pi_\delta) \ \leqslant \ \frac{2|S^{-1}||S|}{n\kappa\delta}\W_1(\mu, \pi_\delta) \,.
\end{eqnarray*}

\end{proof}

\section{Bias analysis}\label{Sec:CoupleVerlet}

First, we prove Proposition~\ref{prop_main:nupi1}, which relates the equilibrium error between $\pi$ and $\pi_\delta$ to the one-step error between $\pi$ and $\pi P_V$.

 \begin{proof}[Proof of Proposition~\ref{prop_main:nupi1}]
We use the notations of Proposition~\ref{prop:contraction}.    Using the  invariance of $\pi_\delta$ by $P$ and Corollary~\ref{Cor:Wpcontract},
  \begin{eqnarray*}
\mathcal W_{p,a,b}\po \pi_\delta ,\pi\pf   & \leqslant & \mathcal W_{p,a,b}\po \pi_\delta P ,\pi P\pf  + \mathcal W_{p,a,b}\po \pi P  ,\pi \pf \\
& \leqslant& (1-\delta\kappa/2) \mathcal W_{p,a,b}\po \pi_\delta ,\pi\pf +    \mathcal W_{p,a,b}\po \pi P  ,\pi  \pf
 \end{eqnarray*}
 where we used that $\sqrt{1-x} \leqslant 1-x/2$ for $x\in[0,1]$. As a consequence,
 \[\mathcal W_{p,a,b}\po \pi_\delta ,\pi\pf  \ \leqslant \ \frac{2}{\delta\kappa}\mathcal W_{p,a,b}\po \pi P  ,\pi  \pf \]
 which together with the equivalence of $|\cdot|$ and $\|\cdot\|_{a,b}$ (see \eqref{eq:equivalenceWp}) gives
 \[\mathcal W_{p}\po \pi_\delta ,\pi\pf  \ \leqslant \ \frac{2 K_1}{\delta\kappa}\mathcal W_{p}\po \pi P  ,\pi  \pf \]
 where $K_1$ is given in Theorem~\ref{thm_main:Wpcontract}.  Now, decomposing $P=P_OP_VP_O$, where $P_O$ has been introduced in Section~\ref{Sec:MOBABO},  we remark that  $P_O$ does not increase $\mathcal W_{p}$, as can be seen with a parallel coupling. Indeed, let $(x_i,v_i)\in\R^{2d}$, $i=1,2$,  and $G\sim \mathcal N(0,I_d)$. Set $x_i'=x_i$ and $v_i' = \eta v_i + \sqrt{1-\eta^2} G$ for $i=1,2$. Then $(x_i',v_i') \sim \delta_{(x_i,v_i)}P_O$ and
 \[|x_1'-x_2'|^2 +  |v_1'-v_2'|^2  \ \leqslant \ |x_1-x_2|^2 +  |v_1-v_2|^2\]
 almost surely. Considering $\nu_i \in \mathcal P_p(\R^d)$ for $i=1,2$, sampling $(x_i,v_i)\sim \nu_i$, using the previous coupling, taking the power $p/2$ and the expectation yields
 \[\W_{p}(\nu_1 P_O,\nu_2P_O) \ \leqslant \ \W_{p}(\nu_1 ,\nu_2)\,.\]
 Moreover, $\pi$ is invariant by $P_O$, so that
 \[\mathcal W_{p}\po \pi P  ,\pi  \pf \ = \ \mathcal W_{p}\po \pi P_OP_VP_O  ,\pi P_O  \pf \ \leqslant \ \mathcal W_{p}\po \pi P_V  ,\pi  \pf\,.\]
 For the total variation distance, similarly, for $n\geqslant 1$,
 \begin{eqnarray*}
 \|\pi_\delta - \pi\|_{TV} 
 & \leqslant & \|\pi_\delta P^n - \pi P^n\|_{TV} + \sum_{k=1}^n\|(\pi P-\pi) P^{k-1}  \|_{TV} \\
 & \leqslant &  \frac{K_1K_2 }{\delta^{3/2}}(1-\delta \kappa)^{(n-1)/2} \W_1(\pi_\delta , \pi )+ n \|\pi  P - \pi  \|_{TV} \,,
 \end{eqnarray*}
 where we used Corollary~\ref{Cor:r*P} and the fact the total variation is decreased by any Markov operator. Using again that $\pi$ is invariant by $P_O$,
 \[\|\pi  P - \pi  \|_{TV} \ = \ \|\pi  P_O P_V P_O - \pi P_O  \|_{TV} \ \leqslant \ \|\pi  P_V - \pi  \|_{TV}\,,\]
 which concludes.
  \end{proof}
  
As mentioned in Section~\ref{Sec:mainresBias}, in the following, we compare $P_V$ to two reference transitions: the first one is $P_{MV}$, the Metropolis-adjusted Verlet transition introduced in Section~\ref{Sec:MOBABO}, with acceptance probability $\alpha$. The second is $Q_\delta$ where $(Q_t)_{t\geqslant 0}$ is the deterministic transition operator of the Hamiltonian dynamics: given $(x,v)\in\R^{2d}$ and considering  $(\hat x_t,\hat v_t)_{t\geqslant 0}$  the solution of
\begin{equation}\label{HD}
\hat x_t' \ = \ \hat v_t \,,\qquad \hat v_t ' \ = \ -\na U(\hat x_t)\,,\qquad \hat x_0=x\,,\qquad \hat v_0=v\,,
\end{equation}
denote $\Phi_{HD}(x,v) = (x_\delta,v_\delta)$ and $Q_\delta\varphi(x,v) = \varphi(x_\delta,v_\delta)$.

  \begin{lem}\label{lem:piPV}
  For $\nu \in \mathcal P(\R^{2d})$ and $(X,V)\sim \nu$,
  \begin{eqnarray*}
  \| \nu P_V - \nu P_{MV}\|_{TV} & \leqslant &   2\mathbb E\po 1- \alpha(X,V)\pf \,.
  \end{eqnarray*}
If, moreover, $\nu \in \mathcal P_{p}(\R^{2d})$ for some $p\geqslant 1$,  then
   \begin{eqnarray*}
   \W_p\po \nu P_V,\nu P_{MV} \pf & \leqslant  &  \po \mathbb E\co \left|(X,-V)-\Phi_V(X,V)\right|^p (1-\alpha(X,V))\cf\pf^{1/p}
  \end{eqnarray*}
  and
     \begin{eqnarray*}
   \W_p\po \nu P_V,\nu Q_\delta \pf & \leqslant  &  \po \mathbb E\co \left| \Phi_V(X,V) - \Phi_{HD}(X,V) \right|^p  \cf\pf^{1/p}
  \end{eqnarray*}
 
  \end{lem}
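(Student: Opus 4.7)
The proof is essentially a matter of writing down the right coupling, since both $P_V$ and $Q_\delta$ are deterministic transitions and $P_{MV}$ differs from $P_V$ only through the Metropolis accept/reject step.

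First, I would unwind the definition of $P_{MV}=P_RP_{MH}$ to produce an explicit algorithmic description of the transition. Starting from $(x,v)$, the proposal $P_{MH}$ returns $\Phi_R\Phi_V(x,v)=(x',-v')$ (with $(x',v')=\Phi_V(x,v)$) with probability $\alpha(x,v)$ and keeps $(x,v)$ otherwise; applying $P_R$ afterwards flips the velocity in both branches, so that $P_{MV}(x,v)$ returns $\Phi_V(x,v)$ with probability $\alpha(x,v)$ and $(x,-v)$ with probability $1-\alpha(x,v)$. This is the key observation: on the accepted event, the Metropolis-adjusted Verlet transition coincides exactly with the unadjusted one, and the whole bias concentrates on the rejection event.

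Next I would construct the natural synchronous coupling of $\nu P_V$ and $\nu P_{MV}$: sample $(X,V)\sim \nu$ together with an independent uniform $W$ on $[0,1]$, set $Y=\Phi_V(X,V)$ (so $Y\sim\nu P_V$) and
\[
Y' \ = \ \Phi_V(X,V)\, \1_{W\leqslant \alpha(X,V)} \ + \ (X,-V)\, \1_{W> \alpha(X,V)}\,,
\]
so that $Y'\sim \nu P_{MV}$. Then $\{Y\neq Y'\}\subset \{W>\alpha(X,V)\}$, which gives
\[
\|\nu P_V-\nu P_{MV}\|_{TV} \ \leqslant \ 2\mathbb P(Y\neq Y') \ \leqslant \ 2\mathbb E\bigl(1-\alpha(X,V)\bigr)\,,
\]
and, by the definition of $\mathcal W_p$ as an infimum over couplings,
\[
\mathcal W_p^p(\nu P_V,\nu P_{MV}) \ \leqslant \ \mathbb E\bigl[|Y-Y'|^p\bigr] \ = \ \mathbb E\Bigl[\bigl|\Phi_V(X,V)-(X,-V)\bigr|^p\bigl(1-\alpha(X,V)\bigr)\Bigr]\,,
\]
using that on $\{W\leqslant \alpha(X,V)\}$ the two variables coincide, and that $W$ is independent of $(X,V)$.

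Finally, for the comparison of $\nu P_V$ and $\nu Q_\delta$, both are pushforwards of $\nu$ by the deterministic maps $\Phi_V$ and $\Phi_{HD}$ respectively, so sampling $(X,V)\sim \nu$ and taking $(\Phi_V(X,V),\Phi_{HD}(X,V))$ is a coupling, giving immediately
\[
\mathcal W_p^p(\nu P_V,\nu Q_\delta) \ \leqslant \ \mathbb E\bigl[|\Phi_V(X,V)-\Phi_{HD}(X,V)|^p\bigr]\,.
\]
There is no real obstacle here; the only point requiring care is the bookkeeping on what $P_{MV}$ returns in the rejection case (namely $(x,-v)$ and not $(x,v)$, because of the extra $P_R$), which is precisely what produces the term $|(X,-V)-\Phi_V(X,V)|^p$ rather than $|(X,V)-\Phi_V(X,V)|^p$ in the second bound.
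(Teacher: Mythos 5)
Your proof is correct and follows the same route as the paper's: the same synchronous coupling $(X,V)\sim\nu$, $W$ uniform and independent, comparing $\Phi_V(X,V)$ with the outcome of the Metropolis step, and the deterministic coupling for the last bound. One small point in your favour: the paper's own proof writes $\W_p^p(\nu P_V,\nu P_{MV})\leqslant 2^p\,\mathbb E[|(X,-V)-\Phi_V(X,V)|^p(1-\alpha(X,V))]$ in its intermediate line, but that factor $2^p$ is spurious, since on the rejection event the difference is exactly $(X,-V)-\Phi_V(X,V)$ with no extra factor; your bookkeeping reproduces the sharp constant that the lemma actually states.
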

   \begin{proof}
 The only difference between $P_V$ and $P_{MV}$ is the accept/reject step. Considering 
 $(X,V)\sim \nu$ and, independently, $W$ a random variable uniformly distributed over $[0,1]$, set
  \[(X',V') \ = \ (X,-V)\1_{W> \alpha(X,V)} + \Phi_V(X,V)\1_{W\leqslant \alpha(X,V)}\]
Then, $(X',V') \sim \nu P_{MV}$, while $\Phi_V(X,V) \sim \nu P_V$, so that 
\[ \| \nu P_V - \nu P_{MV}\|_{TV} \ \leqslant \   2\mathbb P \po (X',V')\neq \Phi_V(X,V)\pf \ =\   2\mathbb E\po 1- \alpha(X,V)\pf\,. \]
Similarly,
\begin{eqnarray*}
 \W_p^p\po \nu P_V,\nu P_{MV} \pf & \leqslant  &  \mathbb E \po |(X',V') - \Phi_V(X,V)|^p\pf \\
 & = &  2^p\mathbb E\po \left|(X,-V)-\Phi_V(X,V)\right|^p(1-\alpha(X,V))\pf\,. 
\end{eqnarray*}
Finally, the last claim follows from the fact $\po \Phi_V(X,V) , \Phi_{HD}(X,V)\pf$ is a coupling of $\nu P_V$ and $\nu Q_\delta$.

\end{proof}

\begin{rmq}
As $\delta \rightarrow 0$, $|(x,-v)-\Phi_V(x,v)| \rightarrow 2|v|$. When comparing a Metropolis adjusted Verlet step and an unadjusted one, in case of rejection, the distance is not small with $\delta$.
\end{rmq}

   \begin{lem}\label{lem:alpha}\
 
   \begin{enumerate}
\item Under Assumption~\refsmooth, for all $x,v\in \R^d$, writing $Z = |v|+\delta/2|\na U(x)|$,
\begin{eqnarray*}
1-\alpha(x,v) & \leqslant &   \delta^3 \frac18 |\na U(x)|^2   +  \delta^2 L\po \frac12+\delta \frac18+\delta^2\frac{L}8\pf Z^2 \,.
\end{eqnarray*}
\item Under Assumptions~\refsmooth\ and \refordren,  for all $(x,v) \in \R^{2d}$, writing $Z = |v|+\delta/2|\na U(x)|$,
\begin{eqnarray*}
1-\alpha(x,v) & \leqslant & \frac{\delta^3}8 \co  |\na U(x)|^2   +  (L+\delta L^2) Z^2+    L_\ell Z ^3   \po 1   + \po |x-x_\star|+\delta Z\pf^{\ell-2}\pf \cf\,.
\end{eqnarray*}
   \end{enumerate}

  \end{lem}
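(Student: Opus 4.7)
The starting point is the elementary bound $1 - e^{-t_+} \leqslant t_+ \leqslant |t|$, which gives $1 - \alpha(x,v) \leqslant |\Delta H|$ where $\Delta H := H(\Phi_V(x,v)) - H(x,v)$. The task is therefore to control $\Delta H$ using the Verlet identities $v_{1/2} = v - \frac{\delta}{2}\nabla U(x)$, $x_1 = x + \delta v_{1/2}$, $v_1 = v_{1/2} - \frac{\delta}{2}\nabla U(x_1)$, together with $|v_{1/2}|\leqslant Z$. Expanding $|v|^2$ and $|v_1|^2$ in terms of $v_{1/2}$ and using $U(x_1) - U(x) = \delta v_{1/2}\cdot \int_0^1 \nabla U(x_s)\,\dd s$ with $x_s = x + s\delta v_{1/2}$, a short calculation yields
\[
\Delta H \ = \ \delta\, v_{1/2}\cdot\mathrm{Err} \ +\ \frac{\delta^2}{8}\bigl(|\nabla U(x_1)|^2 - |\nabla U(x)|^2\bigr),
\]
where $\mathrm{Err} := \int_0^1 \nabla U(x_s)\,\dd s - \frac{1}{2}(\nabla U(x)+\nabla U(x_1))$ is the trapezoidal quadrature error for the function $s\mapsto \nabla U(x_s)$. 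An integration by parts then rewrites this error as $\mathrm{Err} = \delta \int_0^1 (\tfrac{1}{2} - s)\,\nabla^2 U(x_s)\,v_{1/2}\,\dd s$.

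For Part 1, under \refsmooth\ alone I bound $|\nabla^2 U|\leqslant L$ to control the trapezoidal term by $O(\delta^2 L|v_{1/2}|^2)$. For the second summand, the factorization $|\nabla U(x_1)|^2 - |\nabla U(x)|^2 = (\nabla U(x_1)-\nabla U(x))\cdot(2\nabla U(x)+(\nabla U(x_1)-\nabla U(x)))$ combined with $|\nabla U(x_1)-\nabla U(x)|\leqslant \delta L|v_{1/2}|$ produces a cross term $O(\delta^3 L|\nabla U(x)|\,|v_{1/2}|)$ and a quadratic $O(\delta^4 L^2|v_{1/2}|^2)$ term. The cross term is split via $2ab\leqslant a^2+b^2$ to generate the $\frac{\delta^3}{8}|\nabla U(x)|^2$ contribution and an extra $O(\delta^3 L^2|v_{1/2}|^2)$ piece; substituting $|v_{1/2}|\leqslant Z$ throughout yields the announced estimate.

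For Part 2, the key improvement comes from the identity $\int_0^1 (\tfrac{1}{2}-s)\,\dd s = 0$: subtracting the constant matrix $\nabla^2 U(x)$ inside $\mathrm{Err}$ produces
\[
\mathrm{Err} \ = \ \delta \int_0^1 \bigl(\tfrac{1}{2}-s\bigr)\bigl[\nabla^2 U(x_s) - \nabla^2 U(x)\bigr]v_{1/2}\,\dd s,
\]
and under \refordren\ one has $|\nabla^2 U(x_s) - \nabla^2 U(x)| \leqslant \frac{L_\ell}{4}\,s\delta|v_{1/2}|\,(2 + |x-x_\star|^{\ell-2} + |x_s-x_\star|^{\ell-2})$ with $|x_s-x_\star|\leqslant |x-x_\star| + \delta Z$. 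This upgrades $\delta\,v_{1/2}\cdot\mathrm{Err}$ from $O(\delta^2 L|v_{1/2}|^2)$ to $O(\delta^3 L_\ell |v_{1/2}|^3(1+(|x-x_\star|+\delta Z)^{\ell-2}))$, which produces the $L_\ell Z^3$ contribution of the target. The remaining $\frac{\delta^2}{8}(|\nabla U(x_1)|^2-|\nabla U(x)|^2)$ is controlled exactly as in Part 1 and supplies the $\frac{\delta^3}{8}|\nabla U(x)|^2$ and $(L+\delta L^2)Z^2$ pieces.

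The core mechanism is the cancellation $\int_0^1(\tfrac{1}{2}-s)\,\dd s = 0$ built into the trapezoidal identity: without any regularity on $\nabla^2 U$ beyond boundedness the best one can extract from $\mathrm{Err}$ is the $O(\delta L|v_{1/2}|)$ bound of Part 1, whereas a Hölder-type control on the Hessian (here \refordren) automatically yields an extra factor $\delta|v_{1/2}|$ and hence the third-order scaling of Part 2. Apart from this conceptual step, the remaining work is a routine but delicate accounting of constants across the Young-type inequalities used to balance $|\nabla U(x)|^2$ against $Z^2$ contributions.
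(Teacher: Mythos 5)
Your proposal is correct and follows essentially the same route as the paper's proof: after substituting $g(s)=\tfrac12(\na U(x)+\na U(x+s\xi))$ with $\xi=\delta v_{1/2}$, the paper's decomposition $\Delta H = \xi\cdot\int_0^1(2g(s)-g(0)-g(1))\,\dd s + \tfrac{\delta^2}{2}g(1)\cdot(g(1)-g(0))$ is precisely your $\Delta H = \delta v_{1/2}\cdot\mathrm{Err}+\tfrac{\delta^2}{8}\big(|\na U(x_1)|^2-|\na U(x)|^2\big)$, and the paper likewise obtains Part 2 by rewriting the trapezoidal error as $\int_0^1(1-2s)\big(g'(s)-g'(0)\big)\,\dd s$, exploiting $\int_0^1(\tfrac12-s)\,\dd s=0$ exactly as you do. The only cosmetic differences are the choice of weight in the Young inequality that splits the cross term (which shuffles a factor of $L$ between the $|\na U(x)|^2$ and $Z^2$ pieces) and, for Part 1, the paper bounds $2g(s)-g(0)-g(1)$ pointwise by $\tfrac12 L|\xi|$ rather than via the integral representation; neither difference affects the order of the estimate.
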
 

  \begin{proof}

 Since $1-e^{-s}\leqslant s$ for all $s>0$,
\[   1- \alpha \ \leqslant \   \po H\circ \Phi_V - H\pf_+  \ \leqslant \    |H\circ \Phi_V- H| \,.\]
 For $x,v\in\R^d$, denoting $\xi = \delta v - \delta^2/2 \na U(x)$ and $\overline \na U = (\nabla U(x)+\nabla U(x+\xi))/2$,
 \begin{eqnarray*}
 H(\Phi_V(x,v)) - H(x,v) & = & U(x+\xi) - U(x) + \frac12 |v - \delta \overline  \na U|^2 - \frac12|v|^2 \\
 &  = & \xi\cdot \int_0^1 \na U(x+s\xi)\dd s -   \delta v \cdot \overline  \na U + \delta^2/2|\overline  \na U|^2\\
 & =& \xi\cdot \po \int_0^1 \na U(x+s\xi)\dd s - \overline \na U\pf + \delta^2/2 \overline  \na U \cdot \po \overline  \na U - \na U(x)\pf\\
 & =& \xi\cdot \int_0^1 (2g(s)-g(0)-g(1))\dd s + \delta^2/2 g(1) \cdot \po g(1)-g(0)\pf
 \end{eqnarray*}
 with $g(s) = (\na U(x) + \na U(x+s\xi))/2$ for $s\in[0,1]$.  Under Assumption~\refsmooth,
\[\left| g(1) \cdot \po g(1)-g(0)\pf\right| \ \leqslant \ \frac12 L |g(0)| |\xi| + \frac{1}{4}L^2 |\xi|^2 \ \leqslant \ \delta \frac L 4 \po  |\na U(x)|^2 + (1+L\delta )|\xi/\delta|^2\pf\,,\]
and similarly, for all $s\in[0,1]$,
\[|2g(s)-g(0)-g(1)| \ \leqslant \ \frac12 L |\xi|\,.\]
We have finally obtained, in the case where Assumption~\refsmooth\ alone is assumed,
 \begin{eqnarray*}
 |H(\Phi_V(x,v)) - H(x,v) |& \leqslant  & \delta^3 \frac L 8 \po  |\na U(x)|^2 + (1+L\delta )|\xi/\delta|^2\pf +  \frac12 L |\xi|^2
 \end{eqnarray*}
 and bounding $|\xi|\leqslant \delta |v| + \delta^2/2|\na U(x)|$ concludes the proof of the first claim.

However, under additional assumptions, we can treat more carefully the term
 \begin{eqnarray}\label{calculintg-g}
 2\int_0^1 g(s) \dd s - g(0)-g(1) & = & 2\int_0^1   \int_0^s g'(u) \dd u\dd s  -\int_0^1 g'(s)   \dd s \notag\\
 & = & \int_0^1 (1-2s) g'(s)\dd s \notag\\
 & = & \int_0^1 (1-2s) \po g'(s) - g'(0)\pf\dd s \,.
  \end{eqnarray}
Under Assumption~\refordren, as $\int_0^1 |1-2s|s\dd s  \leqslant 1/2$,
\begin{eqnarray*}
\left| \int_0^1 (1-2s) \po g'(s) - g'(0)\pf\dd s \right| & \leqslant & \frac1{16}L_\ell|\xi|^2   \po 2 + |x-x_\star|^{\ell-2} + \po |x-x_\star|+|\xi|\pf^{\ell-2}\pf \\
 & \leqslant & \frac1{8}L_\ell|\xi|^2   \po 1   + \po |x-x_\star|+|\xi|\pf^{\ell-2}\pf \,.
\end{eqnarray*}

  \end{proof}

\begin{lem}\label{lem:VerletDiscretization}\ 

\begin{enumerate}
\item Under Assumption \refsmooth, for all $(x,v) \in \R^{2d}$, writing $Z = |v|+\delta/2|\na U(x)|$,
\[|\Phi_V(x,v) - \Phi_{HD}(x,v)| \ \leqslant \   L \delta^2 \co 1 +  e^{L \delta^2 } \po \delta /6    + \delta^2 L/24\pf  \cf Z \,. \]
\item Under Assumptions \refsmooth\ and \refordren, for all $(x,v) \in \R^{2d}$, writing $Z = |v|+\delta/2|\na U(x)|$, 
\begin{multline*}
|\Phi_V(x,v) - \Phi_{HD}(x,v)| \ \leqslant \ \frac{\delta^3}4|\na U(x)| 
   +   \frac{L\delta^3}{6} \co        1 + \delta \frac{L+1}{2}  + \delta^3 \frac{L}{40}    \cf e^{L \delta^2 }  Z \\
   +\ \delta^3 \frac{L_\ell}{8}\po 1+ e^{L\delta^2} \frac{\delta^2}{12}\pf Z^2 \po 1 + \co |x-x_\star|  + \po \delta + e^{L\delta^2} \frac{\delta^3}{6}\pf Z \cf^{\ell-2}\pf\,.
\end{multline*}
\end{enumerate} 
\end{lem}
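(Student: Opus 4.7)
The plan is to compare the Verlet image $(x',v') = \Phi_V(x,v)$ with the exact Hamiltonian flow $(\hat x_s, \hat v_s)_{s\in[0,\delta]}$ solving \eqref{HD}. Integrating the flow twice yields
\[\hat x_\delta \ =\ x + \delta v - \int_0^\delta (\delta - s)\, \na U(\hat x_s)\, \dd s\,,\qquad \hat v_\delta \ =\ v - \int_0^\delta \na U(\hat x_s)\, \dd s\,.\]
From the Lipschitz bound $|\na U(\hat x_s)| \leqslant |\na U(x)| + L |\hat x_s - x|$ and the inequality $s|v| + s^2/2 |\na U(x)| \leqslant sZ$ valid for $s \in [0,\delta]$, a Gronwall-type argument applied to the resulting integral inequality for $|\hat x_s - x|$ yields a trajectory estimate of the form $|\hat x_s - x| \leqslant sZ\, e^{Ls^2/2}$, which is the basic tool driving the rest of the proof.

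For part 1, I compare the two images term by term. The position error $\hat x_\delta - x' = \int_0^\delta (\delta-s)\co \na U(x) - \na U(\hat x_s) \cf \dd s$ is bounded via \refsmooth\ and the trajectory estimate by a quantity of order $L\delta^3 Z e^{L\delta^2/2}/6$. The velocity error rewrites as
\[\hat v_\delta - v' \ =\ \tfrac{\delta}{2}\po \na U(x') - \na U(x)\pf - \int_0^\delta \po \na U(\hat x_s) - \na U(x)\pf \dd s\,,\]
the first summand being bounded by $L\delta |x'-x|/2 \leqslant L\delta^2 Z/2$ (since $|x'-x| \leqslant \delta Z$) and the second by $L\int_0^\delta |\hat x_s - x|\dd s$, again controlled by the trajectory estimate. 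Combining position and velocity contributions by the triangle inequality and using the trivial bound $1 \leqslant e^{L\delta^2/2}$ to factor the exponential growth produces the bound of part 1 with the stated constants.

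For part 2, I exploit the formal second-order accuracy of Verlet by Taylor expanding the gradient: $\na U(y) - \na U(x) = \na^2 U(x)(y-x) + \rho(x,y)$, where \refordren\ bounds the remainder by $|\rho(x,y)| \leqslant \tfrac{L_\ell}{4}|y-x|^2 \po 2 + |x-x_\star|^{\ell-2} + |y-x_\star|^{\ell-2}\pf$. Substituting this expansion into both $\tfrac{\delta}{2}(\na U(x') - \na U(x))$ and $\int_0^\delta(\na U(\hat x_s) - \na U(x))\dd s$, using the identity $x' - x = \delta v - \tfrac{\delta^2}{2}\na U(x)$ together with the self-consistent flow relation $\hat x_s - x = sv - \tfrac{s^2}{2}\na U(x) - \int_0^s(s-u)(\na U(\hat x_u) - \na U(x))\dd u$, the $O(\delta^2)$ contributions proportional to $\na^2 U(x) v$ cancel exactly between the boundary and integral parts. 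The leftover deterministic piece is of order $\delta^3$ and produces, together with the analogous treatment of the position error, the $\delta^3|\na U(x)|/4$ and $L\delta^3\co\ldots\cf e^{L\delta^2} Z/6$ contributions; the Taylor remainders $\rho(x,x')$ and $\rho(x, \hat x_s)$ are controlled by \refordren\ together with the sandwich bounds $|x'-x_\star| \leqslant |x-x_\star| + \delta Z$ and $|\hat x_s - x_\star| \leqslant |x-x_\star| + sZe^{Ls^2/2}$, yielding the $L_\ell$ contribution with the stated polynomial factor.

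The main obstacle is computational bookkeeping rather than conceptual: verifying the cancellation of the $\na^2 U(x) v$ terms between the boundary contribution $\tfrac{\delta}{2}\na^2 U(x)(x'-x)$ and the integral contribution $\na^2 U(x)\int_0^\delta (\hat x_s - x)\dd s$ requires a patient third-order expansion of $\hat x_s - x$ in $s$ via the self-consistent flow identity, with careful tracking of all nested integral remainders; and the Gronwall growth factors must be iterated so that they collapse to the single exponential $e^{L\delta^2}$ displayed in the statement.
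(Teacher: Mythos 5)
Your overall architecture is sound and the cancellation you identify (the $\delta^2 v$ terms between $\tfrac{\delta}{2}(x'-x)$ and $\int_0^\delta(\hat x_s-x)\dd s$) is correct, but you will not be able to reproduce the stated constants from your ingredients. The paper's proof introduces the frozen-force auxiliary trajectory $y_t = x + tv - \tfrac{t^2}{2}\na U(x)$, $w_t = v - t\na U(x)$, and applies Gronwall only to the \emph{remainder} $|\hat x_t - y_t|$, obtaining $|\hat x_t - y_t| \leqslant e^{Lt^2}\tfrac{Lt^3}{6}Z$. Since $y_t$ already captures the exact $O(t)$ and $O(t^2)$ part of $\hat x_t - x$, the exponential factor attaches only to the $O(t^3)$ remainder, and the dominant contribution $|y_t - x| \leqslant tZ$ stays exponential-free. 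Your lumped estimate $|\hat x_s - x| \leqslant sZe^{Ls^2/2}$ is correct but strictly looser precisely because it multiplies the whole $sZ$ by the exponential; concretely, the velocity-error integral in part 1 becomes $L\int_0^\delta|\hat x_s - x|\dd s \leqslant Z(e^{L\delta^2/2}-1)$ in your version, whereas the paper gets $\tfrac{L\delta^2}{2}Z + e^{L\delta^2}\tfrac{L^2\delta^4}{24}Z$; a short check ($e^u-1 \approx u + \tfrac{u^2}{2}$ versus $u + e^{2u}\tfrac{u^2}{6}$ for small $u$) shows your bound is \emph{not} dominated by the stated one, so "the stated constants" would not come out.

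For part 2, the paper sidesteps the patient Taylor bookkeeping you describe by reusing the trapezoidal-error identity from the proof of Lemma~\ref{lem:alpha}, namely $\int_0^\delta g(s)\dd s - \delta\tfrac{g(0)+g(\delta)}{2} = \int_0^\delta(\tfrac{\delta}{2}-s)\po g'(s)-g'(0)\pf\dd s$ applied to $g(s) = \na U(\hat x_s)$ (after replacing $\na U(y_\delta)$ by $\na U(\hat x_\delta)$ at a cost $\tfrac{L\delta}{2}|y_\delta-\hat x_\delta|$). This identity exhibits the second-order cancellation in one stroke without nested third-order expansions. Your Taylor-of-$\na U$ route is a legitimate alternative and the same cancellation appears, but again the constants are tied to the $y_t$-decomposition: for instance, the $\na U(x)$ contribution arises in the paper from $L|w_s - v| = Ls|\na U(x)|$ integrated against $\tfrac{\delta}{2}$, whereas in your decomposition the leading $\na U(x)$ term has a prefactor $\na^2 U(x)(\,-\tfrac{\delta^3}{4} + \tfrac{\delta^3}{6}\,)$, yielding a different numerical coefficient. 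To close the argument as stated you need to adopt the frozen-force trajectory as the comparison object in both the position Gronwall estimate and the velocity-error integrals.
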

\begin{proof}
Fix $x,v\in\R^{2d}$. Assumption \refsmooth\ holds in the whole proof. Consider $(\hat x_t,\hat v_t)_{t\geqslant 0}$ given by \eqref{HD} and $(y_t,w_t)_{t\geqslant 0}$ the solution of 
\[y_t' \ = \ w_t \,,\qquad w_t ' \ = \ -\na U(x)\,,\qquad  y_0=x\,,\qquad  w_0=v\,.\]
In other words,
\[y_t \ = \ x + t v - \frac{t^2}{2}\na U(x)\,,\qquad w_t = v-t\na U(x)\,,\]
so that $\Phi_V(x,v) = (y_\delta,\tilde w_\delta)$ with $\tilde w_\delta = w_\delta + \delta/2(\na U(y_\delta)-\na U(x))$. Then,
\begin{eqnarray*}
|\hat x_t - y_t| & = & \left| \int_0^t \int_0^s (\na U(\hat x_u) - \na U(x)) \dd u \dd s \right|\\
& \leqslant & L \int_0^t \int_0^s \po |\hat x_u - y_u|+|y_u-x|\pf \dd u \dd s\\
& \leqslant & L \int_0^t \int_u^t\po |\hat x_u - y_u|+u|v|+u^2/2|\na U(x)|\pf  \dd s\dd u \\
& \leqslant & L \int_0^t (t-u) |\hat x_u - y_u|   \dd u + \frac{Lt^3}{6}|v|+ \frac{Lt^4}{24}|\na U(x)| \,.
\end{eqnarray*}
Bounding $(t-u)\leqslant t$ and applying Gronwall's Lemma, we get for all $t\in[0,\delta]$
\[|\hat x_t - y_t| \ \leqslant \ e^{L t^2 } \po \frac{L t^3}{6}|v|+\frac{L t^4}{24}|\na U(x)|\pf \ \leqslant \ e^{L t^2 } \frac{Lt^3}{6} Z \,.\]
Similarly, for all $t\in [0,\delta]$,
\begin{eqnarray*}
|\hat v_t - w_t| & = & \left|\int_0^t (\na U(\hat x_s) - \na U(x)) \dd u \right| \\
& \leqslant & L\int_0^t \po |\hat x_s - y_s| + |y_s-x| \pf \dd s  \\
& \leqslant&   e^{L t^2 } \frac{L^2t^4}{24} Z + \frac{L t^2}{2}Z\,.
\end{eqnarray*}
As a consequence,
\begin{eqnarray*}
 |\hat v_{\delta} -\tilde  w_\delta  | & \leqslant &  |\hat v_{\delta} -   w_\delta  | + \frac{\delta L}{2} |y_\delta - x |\\
  & \leqslant&  e^{L \delta^2 } \frac{L^2\delta^4}{24} Z +  L \delta^2 Z  \,.
\end{eqnarray*}
The conclusion of the proof of the first claim follows from the previous estimates and
\[|\Phi_V(x,v) - \Phi_{HD}(x,v)|  \ \leqslant \ |\hat x_\delta - y_\delta| + |\hat v_{\delta} -\tilde  w_\delta  |\,.\]

Note that $|\hat v_\delta - w_\delta|$ yields a first order error (in $\delta$). Now, if an additional assumption  is available, in order to see that the Verlet scheme is in fact a second order integrator, we should rather directly bound
\begin{eqnarray*}
 |\hat v_{\delta} -\tilde  w_\delta  |  
 & = & \left|\int_0^\delta \na U(\hat x_s)\dd s  - \delta \frac{\na U(x)+\na U(y_\delta)}{2}\right|\\
  & \leqslant  & \left|\int_0^\delta \na U(\hat x_s)\dd s  - \delta \frac{\na U(x)+\na U(\hat x_\delta)}{2}\right| + \frac{L\delta }{2}|y_\delta-\hat x_\delta|\,.
\end{eqnarray*}
As in the proof of Lemma~\ref{lem:alpha} (more precisely, \eqref{calculintg-g}), denoting $g(s) = \na U(\hat x_s)$,
\begin{eqnarray*}
\left|\int_0^\delta g(s)\dd s  - \delta \frac{g(0)+g(\delta)}{2}\right| & =& \left|\int_0^\delta  \po \frac\delta 2 - u\pf \po g'(s) - g'(0)\pf \dd s\right|\,,
\end{eqnarray*}
and
\begin{eqnarray*}
|g'(s)-g'(0)| & \leqslant & |  \po \na^2 U (\hat x_s) - \na^2 U(x) \pf v | + |\na^2 U(x) \po \hat v_s - v\pf|\\
& \leqslant & |  \na^2 U (\hat x_s) - \na^2 U(x)| | v | + L \po |\hat v_s - w_s|  + |w_s - v|\pf\,.
\end{eqnarray*}
Under Assumption~\refordren,
\begin{eqnarray*}
|  \na^2 U (\hat x_s) - \na^2 U(x)| & \leqslant & \frac{L_\ell}{2}\po |y_s-x|+|\hat x_s-y_s|\pf \po 1 + \po |x-x_\star|  + |y_s-x|+|\hat x_s-y_s|\pf^{\ell-2}\pf\\
 & \leqslant & \frac{L_\ell}{2}\po s+ e^{L\delta^2} \frac{s^3}{6}\pf Z  A_\ell\,,
\end{eqnarray*}
with
\[A_\ell \ = \   1 + \co |x-x_\star|  + \po \delta + e^{L\delta^2} \frac{\delta^3}{6}\pf Z \cf^{\ell-2} \,,\]
so that
\begin{eqnarray*}
\frac{\delta}{2}\int_0^\delta |v| |  \na^2 U (\hat x_s) - \na^2 U(x)|\dd s  & \leqslant & \delta^3 \frac{L_\ell}{8}\po 1+ e^{L\delta^2} \frac{\delta^2}{12}\pf Z^2 A_\ell\,.
\end{eqnarray*}
Gathering the previous bounds,
\begin{multline*}
 |y_\delta-\hat x_\delta| + |\hat v_{\delta} -\tilde  w_\delta  | \ \leqslant \ \frac{\delta}{2}\int_0^\delta |g'(s)-g'(0)|\dd s + \po 1+ \frac{L\delta }{2}\pf|y_\delta-\hat x_\delta|\\
 \leqslant \ \delta^3 \frac{L_\ell}{8}\po 1+ e^{L\delta^2} \frac{\delta^2}{12}\pf Z^2 A_\ell + \frac\delta 2\int_0^\delta \po  e^{L s^2 } \frac{L^2s^4}{24} Z + \frac{L s^2}{2}Z + s|\na U(x)|\pf \dd t\\
  + \po 1+ \frac{L\delta }{2}\pf e^{L \delta^2 } \frac{L\delta^3}{6} Z \\
   \leqslant \ \delta^3 \frac{L_\ell}{8}\po 1+ e^{L\delta^2} \frac{\delta^2}{12}\pf Z^2 A_\ell  + \frac{\delta^3}4|\na U(x)| 
   +   \co        1 + \delta \frac{L+1}{2}  + \delta^3 \frac{L}{40}    \cf e^{L \delta^2 } \frac{L\delta^3}{6} Z \,.
\end{multline*}
\end{proof}

Recall that $\pi_1$ denotes the first marginal of $\pi$, namely the probability law on $\R^d$ with  density $\propto \exp(-U)$. Similarly, denote  by $\pi_2$ the second marginal of $\pi$, which is simply the standard Gaussian distribution on $\R^d$.
  
  \begin{lem}\label{lem:moments}
Under Assumption~\refsmooth\ and \refconvex, for all $\beta\geqslant 0$,
\[\int_{\R^{2d}} |\na U(x)|^\beta \pi_1(\dd x) \ \leqslant \ L^\beta\int_{\R^{2d}} |x-x_\star|^\beta \pi_1(\dd x) \ \leqslant \ L^\beta\po \frac{d+(\beta-2)_+}{m}\pf^{\beta/2}\]
and
\[\int_{\R^{2d}} |v|^\beta \pi_2(\dd v) \ \leqslant \ \po  d+(\beta-2)_+ \pf^{\beta/2}\,.\]
  \end{lem}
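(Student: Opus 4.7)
The first inequality in the $\pi_1$-bound is immediate: the Lipschitz estimate \eqref{eq:x_star} gives $|\na U(x)|\leqslant L|x-x_\star|$, so $|\na U(x)|^\beta \leqslant L^\beta |x-x_\star|^\beta$ pointwise, and one integrates.

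For the bound $\int |x-x_\star|^\beta \dd \pi_1 \leqslant \po (d+(\beta-2)_+)/m\pf^{\beta/2}$, the plan is an integration by parts against $\pi_1\propto e^{-U}$ with the radial vector field
\[F(x) \ = \ (x-x_\star)|x-x_\star|^{\beta-2}\,.\]
A direct computation gives $\na \cdot F = (d+\beta-2)|x-x_\star|^{\beta-2}$. Assuming $\beta\geqslant 2$ (so $F$ is smooth enough and the boundary terms vanish by the Gaussian-type tail of $\pi_1$ coming from $m$-convexity), integration by parts yields
\[(d+\beta-2)\int |x-x_\star|^{\beta-2} \dd \pi_1 \ = \ \int (x-x_\star)\cdot \na U(x)\, |x-x_\star|^{\beta-2} \dd \pi_1\,.\]
Since $x_\star$ is the global minimum of $U$, we have $\na U(x_\star)=0$, so Assumption~\refconvex\ gives $(x-x_\star)\cdot \na U(x) \geqslant m|x-x_\star|^2$, and thus
\[m\int |x-x_\star|^\beta \dd \pi_1 \ \leqslant \ (d+\beta-2)\int |x-x_\star|^{\beta-2} \dd \pi_1\,.\]
By Hölder's inequality, $\int |x-x_\star|^{\beta-2} \dd \pi_1 \leqslant \po \int |x-x_\star|^\beta \dd \pi_1\pf^{(\beta-2)/\beta}$, so writing $M_\beta := \int |x-x_\star|^\beta \dd \pi_1$ one gets $M_\beta^{2/\beta} \leqslant (d+\beta-2)/m$, i.e.\ the announced bound. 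For $\beta\in[0,2]$, Jensen's inequality gives $M_\beta\leqslant M_2^{\beta/2} \leqslant (d/m)^{\beta/2}$, matching the formula since $(\beta-2)_+=0$ in that range.

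The bound for $\pi_2$ follows from the exact same argument applied to the potential $V(v)=|v|^2/2$, which satisfies \refsmooth\ and \refconvex\ with $L=m=1$ and minimum at $v_\star=0$; the integration by parts then yields the same recursion with $m=1$, hence $\int |v|^\beta \dd \pi_2 \leqslant (d+(\beta-2)_+)^{\beta/2}$.

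The only mild technical point is to justify the integration by parts for non-integer $\beta\geqslant 2$ (where $F$ is only $\mathcal C^1$) and the decay of $F\, e^{-U}$ at infinity, which is guaranteed by the quadratic lower bound on $U$ from \refconvex\ (equivalently, by the Gaussian-type tail bound that can itself be established by applying the argument first to integer $\beta$); a standard cutoff/approximation argument removes this issue. I expect no other obstacle, the bound being essentially a moment estimate for log-concave measures via the classical $\Gamma$-calculus integration-by-parts trick.
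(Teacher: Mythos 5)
Your proof is correct and essentially the same as the paper's: your integration by parts against $F=\frac1\beta\na\po|x-x_\star|^\beta\pf$ is exactly the identity $\mathbb E_{\pi_1}(\mathcal L\varphi)=0$ for $\varphi(x)=|x-x_\star|^\beta$, where $\mathcal L=-\na U\cdot\na+\Delta$ is the overdamped Langevin generator, which is what the paper writes directly (after implicitly translating $x_\star$ to $0$), followed by the same $m$-convexity estimate, Jensen's inequality to close the recursion, and the specialization to $U(v)=|v|^2/2$ for the velocity marginal.
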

    \begin{proof}
 Consider $\mathcal L = -\na U + \Delta$ the generator of the overdamped Langevin diffusion, which leaves invariant $\pi_1$. For $\zeta \geqslant 1$   and $\varphi(x) = |x|^{2\zeta}$,
 \begin{eqnarray*}
\mathcal L \varphi(x) &=& -2\zeta |x|^{2(\zeta-1)}\na U(x) \cdot x + |x|^{2(\zeta-1)}\po 2\zeta d+4\zeta(\zeta-1)\pf \\
& \leqslant & -2m\zeta|x|^{2\zeta} +  |x|^{2(\zeta-1)}\po 2\zeta d+4\zeta(\zeta-1)\pf \,.
 \end{eqnarray*}
 The invariance of $\pi_1$ implies that $\mathbb E_{\pi_1}(\mathcal L\varphi)= 0$, so that
 \begin{eqnarray*}
0 & \leqslant  & -2m\zeta\mathbb E_{\pi_1}\po \varphi\pf +  \po 2\zeta  d+4\zeta(\zeta-1)\pf \mathbb E_{\pi_1} \po \varphi^{(\zeta-1)/\zeta}\pf \\
& \leqslant  & -2m \zeta \mathbb E_{\pi_1}\po \varphi\pf +  \po 2\zeta d+4\zeta(\zeta-1)\pf \po \mathbb E_{\pi_1} \po \varphi\pf \pf^{(\zeta-1)/\zeta}
 \end{eqnarray*}
 from Jensen's inequality, and thus
 \[\po \mathbb E_{\pi_1} \po \varphi\pf \pf^{1/\zeta} \ \leqslant \ \frac{d+2(\zeta-1)}{m}\,.\]
 This concludes the case of $\beta = 2\zeta\geqslant 2$. The case $\beta\in[0,2)$ is obtained by Jensen's inequality. Applying this result to the potential $U(x) = |x|^2/2$ (so that $m=1$) bounds the moments for the velocity.
  \end{proof}
  
  We can now gather the results of the last four lemmas to get the following.
  
  \begin{prop}\label{prop:WppiPV} \

\begin{enumerate}
\item Under Assumptions~\refsmooth\ and \refconvex, for all $p\geqslant 1$,
\begin{eqnarray*}
\W_p\po \pi,\pi P_V\pf  & \leqslant & \delta^2 K_3 \sqrt{d+(p-2)_+}  \\
\| \pi- \pi P_V \|_{TV} & \leqslant &\delta^2   K_4 d
\end{eqnarray*}
with
\begin{eqnarray*}
K_3 &=&L   \co 1 +  e^{L \delta^2 } \po \frac{\delta}6    + \frac{\delta^2 L}{24}\pf  \cf \po 1+  \frac{\delta L}{2\sqrt m}\pf\\
K_4 & = &      L\po 1+\delta \frac14+\delta^2\frac{L}4\pf  \po 1 +\delta \frac{L}{\sqrt m} + \frac{\delta^2L^2}{4m}\pf + \delta  \frac{L^2}m\,.
\end{eqnarray*}
\item  Under Assumptions~\refsmooth, \refconvex\ and \refordren, for all $p\geqslant 1$,
\begin{eqnarray*}
\W_p\po \pi,\pi P_V\pf  & \leqslant & \delta^3  \po K_5 \sqrt{d+(p-2)_+} + L_\ell K_6 \po d + \ell p-2\pf^{\ell/2}\pf\\
\| \pi- \pi P_V \|_{TV} & \leqslant & \delta^3  \po K_7 d + L_\ell K_8 \po d + \ell-1\pf^{(\ell+1)/2}\pf
\end{eqnarray*}
with
\begin{eqnarray*}
K_5 &=&  \frac{L}{4\sqrt m}  
   +   \frac{L }{6} \co        1 + \delta \frac{L+1}{2}  + \delta^3 \frac{L}{40}    \cf e^{L \delta^2 }  \po 1+ \delta \frac{L}{2\sqrt m} \pf  \\
   K_6 & = & 2^{\ell-2}\po 1+ e^{L\delta^2} \frac{\delta^2}{12}\pf \po 1+ \delta + e^{L\delta^2} \frac{\delta^3}{6}\pf^{\ell-2} \po 1+ \frac{\delta L}{2} \pf^{\ell}  \po 1 + \frac{1}{m^{\ell/2}}\pf\\
   K_7 & = & L(1+\delta L) \po 1+\frac{\delta^2L^2}{4m}\pf\\
   K_8 & = &2^{\ell+1}\po 1+ \delta + \frac{\delta^2 L}{2}\pf^{\ell -2} \po 1+ \frac{\delta L}{2}\pf^3\po 1 + \frac{1}{m^{(\ell+1)/2}}\pf\,.
\end{eqnarray*}
\end{enumerate}  

  \end{prop}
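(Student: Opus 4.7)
The strategy relies on two reference transitions that both preserve $\pi$: the exact Hamiltonian flow $Q_\delta$ (invariance of $\pi$ follows from the symplectic/measure-preserving nature of Hamilton's equations together with the fact that the Hamiltonian $H$ is conserved along trajectories) and the Metropolis-adjusted Verlet transition $P_{MV}$ (invariance of $\pi$ holds by construction). Since $\pi Q_\delta = \pi P_{MV} = \pi$, the bounds of Lemma~\ref{lem:piPV} applied to $\nu=\pi$ directly give
\[\W_p(\pi,\pi P_V) \ \leqslant\  \bigl(\mathbb E_\pi |\Phi_V(X,V)-\Phi_{HD}(X,V)|^p\bigr)^{1/p}\,,\qquad \|\pi-\pi P_V\|_{TV}\ \leqslant\ 2\mathbb E_\pi(1-\alpha(X,V))\,.\]
So the Wasserstein claim is obtained by using the deterministic reference $Q_\delta$ (where the bound is pathwise, hence behaves well for all $p\geqslant 1$), and the total variation claim by using $P_{MV}$ (where the rejection probability directly upper bounds the total variation).

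Next I would insert the pointwise bounds from Lemma~\ref{lem:VerletDiscretization} (for the Wasserstein side) and Lemma~\ref{lem:alpha} (for the total variation side) into those expectations, then take $L^p(\pi)$ and $L^1(\pi)$ norms. The resulting integrands are polynomial expressions in $Z=|v|+(\delta/2)|\na U(x)|$ and, under \refordren, also contain an extra factor $(1+|x-x_\star|+\delta Z)^{\ell-2}$. All terms decompose after the elementary bound $(a+b)^q \leqslant 2^{q-1}(a^q+b^q)$ into a finite sum of monomials in $|v|$, $|\na U(x)|$ and $|x-x_\star|$. Since $\pi$ factorizes as $\pi_1\otimes\pi_2$, with $\pi_2$ standard Gaussian, these moments are independent, so a Hölder/Jensen manipulation reduces everything to the single-variable moment bounds of Lemma~\ref{lem:moments}. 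Specifically, a moment of order $\beta$ of $|v|$ or of $|x-x_\star|$ under $\pi$ is bounded by $(d+(\beta-2)_+)^{\beta/2}$ times the appropriate constant; this is what produces the $\sqrt{d+(p-2)_+}$ and $(d+\ell p-2)^{\ell/2}$ factors in the two cases (and $d$ and $(d+\ell-1)^{(\ell+1)/2}$ in the TV case, which corresponds to $p=1$ but with an extra power of $Z$ coming from $Z^3$ in Lemma~\ref{lem:alpha}).

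The computation is then essentially bookkeeping: one carries each bound through, grouping terms by their $\delta$-order and by whether $L_\ell$ appears. The only subtlety is to make sure the dependence on the dimension enters only through the moments claimed. For the Wasserstein part under \refsmooth\ alone, $|\Phi_V - \Phi_{HD}|$ is bounded linearly in $Z$ by Lemma~\ref{lem:VerletDiscretization}(1), so taking the $L^p$-norm gives $\sqrt{d+(p-2)_+}$ (from the $|v|$ contribution) plus an $\sqrt{m}$-weighted gradient contribution — this is exactly $K_3\sqrt{d+(p-2)_+}$. Under \refordren, Lemma~\ref{lem:VerletDiscretization}(2) has a linear-in-$Z$ part, which again gives the $\sqrt{d+(p-2)_+}$ scaling multiplied by $K_5$, plus a nonlinear part of size $\delta^3 L_\ell Z^2 (1+|x-x_\star|+\delta Z)^{\ell-2}$, whose $L^p$-norm is of order $(d+\ell p-2)^{\ell/2}$ by Lemma~\ref{lem:moments} applied with exponent $\ell p$ (matching $K_6$). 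The total variation case is analogous with exponents shifted by one (since we integrate $Z^2$ or $Z^3$ in the $L^1$ sense), producing $d$ and $(d+\ell-1)^{(\ell+1)/2}$.

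The main obstacle is not conceptual but organisational: one has to identify, in each term produced by Lemmas~\ref{lem:alpha} and~\ref{lem:VerletDiscretization}, the correct moment of $\pi$ to invoke in order to recover exactly the stated constants $K_3,\dots,K_8$, and to verify that the $|\na U(x)|$ contribution under \refsmooth\ is always reducible to a moment of $|x-x_\star|$ (via $|\na U(x)|\leqslant L|x-x_\star|$, from \eqref{eq:x_star}) so that a single application of Lemma~\ref{lem:moments} controls everything. Careful tracking of the factors $1+\delta L/(2\sqrt m)$ etc.\ in each $K_i$ comes out of bounding $Z$ in $L^p(\pi)$ by $\sqrt{d+(p-2)_+}(1+\delta L/(2\sqrt m))$, which is the clean key estimate one should extract and reuse everywhere.
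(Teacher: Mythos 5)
Your proposal follows the paper's proof exactly: use $\pi Q_\delta=\pi$ and Lemma~\ref{lem:piPV} together with Lemma~\ref{lem:VerletDiscretization} for the Wasserstein bound, use $\pi P_{MV}=\pi$ and Lemma~\ref{lem:piPV} together with Lemma~\ref{lem:alpha} for the total variation bound, then reduce to moments via $|\nabla U(x)|\leqslant L|x-x_\star|$ and Lemma~\ref{lem:moments}, with the key sub-estimate $\|\,|V|+\tfrac\delta2|\nabla U(X)|\,\|_{L^p(\pi)}\leqslant \sqrt{d+(p-2)_+}\,(1+\delta L/(2\sqrt m))$. This is the same decomposition, the same lemmas, and the same bookkeeping strategy as in the paper.
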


\begin{proof}
Both cases are similar so we only detail the first one. For $h\geqslant 1$, denote
\[m_{x,h} \ = \ \int_{\R^2d} |\na U(x)|^h \pi_1(\dd x)\qquad \text{and} \qquad m_{v,h} \ = \ \int_{\R^2d} |v|^h \pi_2(\dd v)\,.\]
For $p\geqslant 1$, using Lemmas~\ref{lem:piPV} and \ref{lem:VerletDiscretization} and the invariance of $\pi$ by $Q_\delta$,
\begin{eqnarray*}
\W_p\po \pi,\pi P_V\pf & = & \W_p\po \pi Q_\delta,\pi P_V\pf \\
& \leqslant &  L \delta^2 \co 1 +  e^{L \delta^2 } \po \delta /6    + \delta^2 L/24\pf  \cf \po \mathbb E_\pi \co \po |V|+\delta/2|\na U(X)|\pf^p\cf\pf^{1/p} \\
& \leqslant &  L \delta^2 \co 1 +  e^{L \delta^2 } \po \delta /6    + \delta^2 L/24\pf  \cf  \co \po m_{v,p}\pf^{1/p} +\delta/2   \po m_{x,p}\pf^{1/p} \cf\\
& \leqslant &    \delta^2 K_3  \sqrt{d+(p-2)_+} \,,
\end{eqnarray*}
where we used Lemma~\ref{lem:moments}. Similarly, from Lemmas~\ref{lem:piPV}, \ref{lem:alpha} and \ref{lem:moments},
\begin{eqnarray*}
\| \pi- \pi P_V \|_{TV} & \leqslant & \delta^3 \frac14 \mathbb E_\pi\po |\na U(X)|^2\pf   +  \delta^2 L\po 1+\delta \frac14+\delta^2\frac{L}4\pf \mathbb E_\pi \po  \po |V|+\delta/2|\na U(X)|\pf^2\pf\\
& = & \delta^3 \frac14 m_{x,2}  +  \delta^2 L\po 1+\delta \frac14+\delta^2\frac{L}4\pf  \po m_{v,2} +\delta m_{v,1}m_{x,1} + \frac{\delta^2}{4}m_{x,2}\pf\ \leqslant \  \delta^2 d K_4\,.
\end{eqnarray*}

Remark that in the computations of the second case, to simplify the last term involved in the bound on $|\Phi_V  - \Phi_{HD} |$ in Lemma~\ref{lem:VerletDiscretization}, we use that
\begin{multline*}
A_\ell' \ := \ 1+\co |X-x_\star|  + \po \delta + e^{L\delta^2} \frac{\delta^3}{6}\pf \po |V|+\frac{\delta}2 |\na U(X)|\pf  \cf^{\ell-2}\\
\leqslant \ 1+\po 1+ \delta + e^{L\delta^2} \frac{\delta^3}{6}\pf^{\ell-2} \co |X-x_\star| \po 1+ \frac{\delta L}{2} \pf +   |V|  \cf^{\ell-2} \\
\leqslant \  \po 1+ \delta + e^{L\delta^2} \frac{\delta^3}{6}\pf^{\ell-2} \po 1+ \frac{\delta L}{2} \pf^{\ell-2} 2^{\ell-3}   \po 2+ |X-x_\star|^{\ell-2} +   |V| ^{\ell-2} \pf
\end{multline*}
and then
\[\po |V|+\frac{\delta}2 |\na U(X)|\pf ^2 A_\ell' \ \leqslant \ 2 \po 1 + \frac{\delta L}{2}\pf^2 \po |V|^2 + |X-x_\star|^2\pf A_\ell' \,. \]
Using then Lemma~\ref{lem:moments} to bound
\begin{multline*}
\po \mathbb E_\pi \co \po |V|^2 + |X-x_\star|^2\pf^p \po 2+ |X-x_\star|^{\ell-2} +   |V| ^{\ell-2} \pf^p \cf\pf^{1/p}  \\
 \leqslant \ 
2 \po \mathbb E \po |V|^{2p}\pf\pf^{1/p} + 2 \po \mathbb E \po |X-x_\star|^{2p}\pf\pf^{1/p}  + 3  \po \mathbb E \po |V|^{\ell p}\pf\pf^{1/p} + 3  \po \mathbb E \po |X-x_\star|^{\ell p}\pf\pf^{1/p} \\
\leqslant \ 5 \po d + \ell p-2\pf^{\ell/2} \po 1 + \frac{1}{m^{\ell/2}}\pf 
\end{multline*}
 yields the claimed expression for $K_6$. Similarly, to bound the last term given in Lemma~\ref{lem:alpha}, we use that
 \begin{multline*}
  1   + \po |X-x_\star|+\delta \po |V| + \frac\delta2|\na U(X)|\pf \pf^{\ell-2}  \ \leqslant \ \po 1+ \delta + \frac{\delta^2 L}{2}\pf^{\ell -2} \po 1 + \po |V|+|X-x_\star|\pf^{\ell-2}\pf\\
   \ \leqslant \ 2^{\ell-3}\po 1+ \delta + \frac{\delta^2 L}{2}\pf^{\ell -2} \po 2 +   |V|^{\ell-2}+|X-x_\star|^{\ell-2}\pf,
 \end{multline*}
 then
 \[\po |V|+ \frac\delta2|\na U(X)|\pf^3 \ \leqslant \ 4\po 1+ \frac{\delta L}{2}\pf^3 \po |V|^3+|X-x_\star|^3\pf \,,\]
 and proceed similarly to $K_6$ to get the expression of $K_8$.
 
\end{proof}

The case of a separable target ensues from the following result.

\begin{lem}\label{lem:tensor}
Let $p\geqslant 2$  and, for $i\in\cco 1,d\ccf$, let $\nu_i,\mu_i \in\mathcal P_p(\R)$ and $\nu = \otimes_{i=1}^d \nu_i$ and $\mu = \otimes_{i=1}^d \mu_i$. Then
\begin{eqnarray*}
\W_p^2(\nu,\mu) & \leqslant & \sum_{i=1}^d \W_{p}^2 \po \nu_i,\mu_i\pf \\
\|\nu - \mu\|_{TV}&\leqslant & \sum_{i=1}^d \|\nu_i - \mu_i\|_{TV}\,.
\end{eqnarray*}
\end{lem}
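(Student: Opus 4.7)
The plan is to construct explicit couplings by taking the tensor product of one-dimensional couplings, and to handle the two inequalities separately since they rely on slightly different convexity/union bound arguments.

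For the Wasserstein bound, I would for each $i\in\cco 1,d\ccf$ let $(X_i,Y_i)$ be an optimal $\W_p$-coupling of $\nu_i$ and $\mu_i$ (whose existence follows from \cite[Corollary 5.22]{VillaniOldNew}), and take these couplings independent across $i$. Then $X=(X_1,\dots,X_d)\sim\nu$ and $Y=(Y_1,\dots,Y_d)\sim\mu$ by independence. The cost of this coupling is
\[
\mathbb E\bigl(|X-Y|^p\bigr) \ = \ \mathbb E\left[\left(\sum_{i=1}^d |X_i-Y_i|^2\right)^{p/2}\right].
\]
Here is the one key step: since $p/2\geqslant 1$, I would apply Minkowski's inequality in $L^{p/2}(\Omega)$ to the sum of the nonnegative random variables $f_i=|X_i-Y_i|^2$, yielding
\[
\mathbb E\left[\left(\sum_{i=1}^d f_i\right)^{p/2}\right]^{2/p} \ \leqslant \ \sum_{i=1}^d \mathbb E\bigl(f_i^{p/2}\bigr)^{2/p} \ = \ \sum_{i=1}^d \mathbb E\bigl(|X_i-Y_i|^p\bigr)^{2/p} \ = \ \sum_{i=1}^d \W_p^2(\nu_i,\mu_i),
\]
which gives the desired bound after raising to the power $2/p$.

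For the total variation bound, I would again take independent couplings across coordinates, but now each $(X_i,Y_i)$ is chosen to be a maximal coupling realizing the total variation, i.e. such that $2\mathbb P(X_i\neq Y_i)=\|\nu_i-\mu_i\|_{TV}$. Such maximal couplings exist in one dimension by standard arguments. Then, using the definition of $\|\cdot\|_{TV}$ from Section~2.3 (which includes the factor $2$), one has
\[
\|\nu-\mu\|_{TV} \ \leqslant \ 2\mathbb P\bigl(X\neq Y\bigr) \ = \ 2\mathbb P\bigl(\exists i,\ X_i\neq Y_i\bigr) \ \leqslant \ 2\sum_{i=1}^d \mathbb P(X_i\neq Y_i) \ = \ \sum_{i=1}^d \|\nu_i-\mu_i\|_{TV}
\]
by a union bound, which concludes.

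The only mildly delicate point is the first inequality, where one must remember that the Minkowski inequality requires $p/2\geqslant 1$ (which is why the assumption $p\geqslant 2$ appears in the statement); the rest is routine once the product coupling has been introduced.
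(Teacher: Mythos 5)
Your proof is correct and follows essentially the same route as the paper: independent product couplings in both cases, with Minkowski's inequality in $L^{p/2}$ (left implicit in the paper) for the Wasserstein bound and a union bound over coordinates for the total variation bound. The only difference is that you name the $L^{p/2}$-triangle-inequality step explicitly, which is a helpful clarification rather than a deviation.
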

\begin{proof}
For all $i\in\cco 1,d\ccf$, let $(X_i,Y_i)$ optimal $\W_p$ coupling of $\nu_i$ and $\mu_i$, with $(X_i,Y_i)$ independent from $(X_j,Y_j)$ for $j\neq i$. Then $X=(X_i,\dots,X_d)$ and $Y=(Y_i,\dots,Y_d)$ are respectively distributed according to $\nu$ and $\mu$ so that, for $p\geqslant 2$,
\begin{eqnarray*}
\W_p^2(\nu,\mu) & \leqslant  & \po \mathbb E\co \po \sum_{i=1}^d |X_i-Y_i|^2\pf^{p/2}\cf\pf^{2/p} \ \leqslant  \ \sum_{i=1}^d \po \mathbb E\co   |X_i-Y_i|^p\cf\pf^{2/p} \ = \ \sum_{i=1}^d \W_p^2 \po \nu_i,\mu_i\pf\,. 
\end{eqnarray*}
Alternatively, if  the $(X_i,Y_i)$'s are   optimal  couplings of $\nu_i$ and $\mu_i$ for the total variation distance, then
\begin{eqnarray*}
\|\nu-\mu\|_{TV} & \leqslant & 2\mathbb P \po \exists i\in\cco 1,d\ccf,\ X_i\neq Y_i\pf \ \leqslant \ 2\sum_{i=1}^d \mathbb P \po X_i\neq Y_i\pf \ = \ \sum_{i=1}^d \|\nu_i-\mu_i\|_{TV}\,.
\end{eqnarray*}
\end{proof}

As a conclusion of the analysis of the equilibrium bias:

\begin{proof}[Proof of Propositions~\ref{prop_main:erreur1} and \ref{prop_main:biais}]
Simply combine   Propositions~\ref{prop_main:nupi1} and \ref{prop:WppiPV}. More precisely, for the total variation, to prove Proposition~\ref{prop_main:erreur1}, apply Proposition~\ref{prop_main:nupi1} with
\[n \ := \ \err{1+\left\lceil \frac{ |\ln\po  \delta^{3}d\pf| }{|\ln(1-\delta \kappa)|}\right\rceil \ \leqslant \ 2+\frac{|\ln\po  \delta^{3}d\pf| }{\delta \kappa}}\,,\]
so that
\[(1-\delta \kappa)^{\err{(n-1)}/2} \ \leqslant \ \delta^{3/2} \sqrt d\,.\]
Similarly, to prove Proposition~\ref{prop_main:biais}, apply Proposition~\ref{prop_main:nupi1} with
\[n \ := \ \err{1+\left\lceil \frac{ |\ln\po   \delta^{3/2} \sqrt {d+\ell-1}\pf |}{|\ln(1-\delta \kappa)|}\right\rceil \ \leqslant \ 2+\frac{ |\ln\po   \delta^{3/2} \sqrt {d+\ell-1}\pf | }{\delta \kappa}}\,,\]
so that
\[(1-\delta \kappa)^{\err{(n-1)}/2} \ \leqslant \ \delta^{3/2} \sqrt {d+\ell-1}\,.\]

Finally, under the additional condition  \refind, notice that  necessarily each of the potentials $U_i$ for $i\in\cco 1,d\ccf$ satisfies the conditions \refsmooth\ and \refconvex\ with the same constants $L$ and $m$ than $U$. Moreover, the $\W_p$ and total variation distances are unchanged by an orthonormal change of coordinates, so without loss of generality we suppose that $U(x) = \sum_{i=1}^d U_i(x_i)$.  In particular, $\pi = \otimes_{i=1}^d \nu_i$ where $\nu_i$ is the law of $(X_i,V_i)$ where $(X,V)\sim \pi$. Moreover, the coordinates of the OBABO chain are independent one-dimensional OBABO chains associated to the potentials $U_i$, $i\in\cco 1,d\ccf$, and thus similarly $\pi_\delta = \otimes_{i=1}^d \nu_{i,\delta}$ where $\nu_{i,\delta}$ is the equilibrium of the one-dimensional OBABO chain with potential $U_i$.   The first part of Proposition~\ref{prop_main:biais} applies to $\nu_{i,\delta}$ (with $d=1$) for all $i\in\cco 1,d\ccf$, which together with Lemma~\ref{lem:tensor}  concludes.
\end{proof}

\section{Analysis of the Metropolis-adjusted algorithm}\label{Sec:proofMOBABO}

The results on the OBABO transition $P$ are transfered to the OM(BAB)O transition $P_M$ thanks to the following lemma.

\begin{lem}\label{lem:TVDoeblin}
For all $\nu,\mu\in\mathcal P(\R^{2d})$ and $n\in\N$,
\begin{eqnarray*}
\|\nu P_M^n - \mu P_M^n \|_{TV} & \leqslant &  \|\nu P^n - \mu P^n \|_{TV} + 2\sum_{k=0}^{n-1} (\nu+\mu) P^kP_O(1-\alpha)\,. 
\end{eqnarray*}
Moreover, for  $\nu\in\mathcal P(\R^{2d})$, let $(Z_k)_{k\in\N}$ and $(\tilde Z_k)_{k\in\N}$ be respectively an OBABO and OM(BAB)O chain with initial condition $Z_0=\tilde Z_0 \sim \nu$. Then, for all $u\geqslant 0$ and all $\varphi \in L^1(\pi)$,
\begin{eqnarray*}
 \mathbb{P}\po \left|\frac1n\sum_{k=1}^n   \phi(\tilde Z_k) - \pi(\phi)\right| \geqslant u \pf & \leqslant &    \mathbb{P}\po \left|\frac1n\sum_{k=1}^n   \phi( Z_k) - \pi(\phi)\right| \geqslant u \pf  + \sum_{k=0}^{n-1}  \nu P^{k}P_O(1-\alpha) \,.
\end{eqnarray*}
\end{lem}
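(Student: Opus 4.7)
The plan is to construct a synchronous coupling of the OBABO chain $(Z_k)$ and the OM(BAB)O chain $(\tilde Z_k)$ that forces them to agree pathwise until the first Metropolis rejection in the adjusted chain. Concretely, I would sample i.i.d.\ triples $(G_k,G_k',W_k)_{k\geq 1}$, with $G_k,G_k'$ standard Gaussians on $\R^d$ and $W_k$ uniform on $[0,1]$, start both chains at the common $Z_0=\tilde Z_0\sim\nu$, and use the triple $(G_k,G_k',W_k)$ at iteration $k$ to drive the $k$-th transition of each chain according to the explicit recipes of Sections~\ref{Sec:OBABO} and~\ref{Sec:MOBABO}. Since $P_M=P_OP_{MV}P_O$ and $P=P_OP_VP_O$ differ only through the accept/reject step inside $P_{MV}$, and since $P_{MV}$ coincides with $P_V$ on the acceptance event, the two iterations produce exactly the same state whenever $W_k\leq \alpha(x_{k-1},v_{k-1}')$, where $(x_{k-1},v_{k-1}')$ denotes the intermediate state after the first O-substep of iteration $k$ in the common trajectory.

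Define $\tau=\inf\{k\geq 1:W_k>\alpha(x_{k-1},v_{k-1}')\}$. Then $Z_k=\tilde Z_k$ for every $k<\tau$. On the event $\{\tau>k-1\}$ the intermediate state $(x_{k-1},v_{k-1}')$ coincides with the corresponding state of the OBABO chain, whose marginal law is $\nu P^{k-1}P_O$, and $W_k$ is independent of everything up to step $k-1$. A union bound combined with this observation gives
\[
\mathbb P(\tau\leq n)\;\leq\;\sum_{k=1}^{n}\mathbb E\bigl[\mathbf{1}_{\tau>k-1}\bigl(1-\alpha(x_{k-1},v_{k-1}')\bigr)\bigr]\;\leq\;\sum_{k=0}^{n-1}\nu P^{k}P_O(1-\alpha).
\]

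For the first claim, I would decompose $\|\nu P_M^n-\mu P_M^n\|_{TV}$ by the triangle inequality through $\nu P^n$ and $\mu P^n$. Since the TV distance between two laws on $\R^{2d}$ built from a common initial distribution is bounded by twice the probability that the corresponding coupling fails, each cross term $\|\nu P_M^n-\nu P^n\|_{TV}$ and $\|\mu P_M^n-\mu P^n\|_{TV}$ is bounded by $2\sum_{k=0}^{n-1}\nu P^kP_O(1-\alpha)$ and $2\sum_{k=0}^{n-1}\mu P^kP_O(1-\alpha)$ respectively, which assembles to the stated inequality. For the second assertion, on $\{\tau>n\}$ the empirical averages $n^{-1}\sum_{k=1}^n\phi(Z_k)$ and $n^{-1}\sum_{k=1}^n\phi(\tilde Z_k)$ coincide; applying the elementary bound $\mathbb P(A)\leq \mathbb P(B)+\mathbb P(\tau\leq n)$ to $A=\{|n^{-1}\sum\phi(\tilde Z_k)-\pi(\phi)|\geq u\}$ and $B=\{|n^{-1}\sum\phi(Z_k)-\pi(\phi)|\geq u\}$ yields the claimed inequality.

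There is no genuine technical obstacle here: all the substance lies in the pathwise identity $P_M=P$ on the acceptance event, which is immediate from the definitions in Section~\ref{Sec:MOBABO}, together with the fact that the intermediate O-state of the coupled chains has law $\nu P^{k-1}P_O$ as long as no rejection has occurred. Note that no estimate on $1-\alpha$ or on the moments of the chain is used at this stage; the quantitative control of $\nu P^kP_O(1-\alpha)$ that determines the sharpness of Proposition~\ref{prop_main:Metropolis} and Theorem~\ref{thm_main:concentration_Metropolis} is deferred to the combination of Lemma~\ref{lem:TVDoeblin} with Lemma~\ref{lem:alpha} and the moment bounds of Corollary~\ref{cor:nuPkPO}.
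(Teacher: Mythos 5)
Your proof is correct. For the concentration bound (second claim), the coupling construction, the stopping time $\tau$, the marginal $\nu P^{k-1}P_O$ of the intermediate state, and the union bound are essentially identical to the paper's. For the total-variation bound (first claim), you deduce the estimate from the same pathwise coupling by bounding each cross term $\|\nu P_M^n-\nu P^n\|_{TV}\leqslant 2\,\mathbb P(\tau\leqslant n)$, and similarly for $\mu$. The paper instead proceeds purely at the level of transition kernels: it telescopes
\[
\|\nu P_M^n-\nu P^n\|_{TV}\ \leqslant\ \sum_{k=1}^n\|\nu P^{k-1}(P-P_M)\|_{TV}\ \leqslant\ \sum_{k=1}^n\|\nu P^{k-1}P_O(P_V-P_{MV})\|_{TV},
\]
using that Markov kernels are TV contractions to peel off the trailing operators, and then invokes the one-step comparison $\|\eta P_V-\eta P_{MV}\|_{TV}\leqslant 2\,\eta(1-\alpha)$ from Lemma~\ref{lem:piPV}. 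Both routes yield the identical bound. Your version is slightly more economical in that one coupling argument serves both assertions, while the paper's telescoping avoids constructing a joint trajectory for the TV inequality and makes explicit that it factors through the single-step estimate of Lemma~\ref{lem:piPV}.
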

\begin{proof}
For the first claim, simply bound
\begin{eqnarray*}
\|\nu P_M^n - \mu P_M^n \|_{TV} & \leqslant & \|\nu P^n - \mu P^n \|_{TV} + \|\nu P_M^n - \nu P^n \|_{TV} + \|\mu P^n - \mu P_M^n \|_{TV}\,.
\end{eqnarray*}
  The two last terms are similar, we bound
\begin{eqnarray*}
\|\nu P_M^n - \nu P^n \|_{TV} & \leqslant & \sum_{k=1}^n \|\nu P^k P_M^{n-k} - \nu P^{k-1}P^{n-k+1}_M \|_{TV}\\
& \leqslant & \sum_{k=1}^n \|\nu P^{k-1} (P-P_M) \|_{TV}\\
& \leqslant & \sum_{k=1}^n \|\nu P^{k-1} P_O(P_V-P_{MV}) \|_{TV}
\end{eqnarray*}
Conclusion follows from Lemma~\ref{lem:piPV}.

The second claim is obtained simply by saying that an OBABO and a OM(BAB)O chains starting at the same point and sampled with the same random variables remain equal up to the first rejection of the OM(BAB)O chain. More precisely, let $(G_n,G_n',W_n)_{n\in\N}$ be an i.i.d. sequence of random variables such that, for all $n\in\N$, $G_n$, $G_n'$ and $W_n$ are independent, $G_n,G_n'\sim \mathcal N(0,I_d)$ and $W_n$ is uniformly distributed over $[0,1]$. Let $Z_0 \sim \nu$ be independent from these variables. We construct $(Z_k)_{k\in \N}$ and $(\tilde Z_k)_{k\in\N}$ as the OBABO and OM(BAB)O chains with initial condition $Z_0$ and  whose transitions at step $n$ are given respectively   in Section~\ref{Sec:OBABO} with variables $G_n,G_n'$ or in Section~\ref{Sec:MOBABO} with variables $G_n,G_n',W_n$. In other words, the damping parts O in both chains use
 the same Gaussian variables, and the only difference between $Z=(X,V)$ and $\tilde Z=(\tilde X,\tilde V)$ is that the proposal given by the Verlet step BAB is always accepted for $Z$ and is accepted iff $W_n \leqslant \alpha(\tilde Z_n')$ for $\tilde Z$, where $\tilde Z_n' = (\tilde X_n,\eta \tilde V_n + \sqrt{1-\eta^2} G_n)$. In particular, the two chains are equal up to the first rejection, in other words, for all $n\geqslant 1$,
 \begin{eqnarray*}
 \mathbb P \po Z_k = \tilde Z_k\ \forall k\in\cco 1,n\ccf\pf & = & \mathbb P \po W_k \leqslant \alpha\po X_k,\eta   V_k + \sqrt{1-\eta^2} G_k\pf \ \forall k\in \cco 0,n-1\ccf\pf\,.
 \end{eqnarray*}
 Bounding the probability of the union by the sum of the probabilities and using that $( X_k,\eta   V_k + \sqrt{1-\eta^2} G_k) \sim \nu P^k P_O$ for all $k\in\N$, we get that
 \begin{eqnarray*}
A\ :=\ \mathbb P \po \exists k\in \cco 1,n\ccf\,, (Z_k) \neq (\tilde Z_k)\pf & \leqslant & \sum_{k=0}^{n-1}  \nu P^{k}P_O(1-\alpha) \,,
\end{eqnarray*}
and for $u\geqslant 0$ we bound
\begin{eqnarray*}
 \mathbb{P}\po \left|\frac1n\sum_{k=1}^n   \phi(\tilde Z_k) - \pi(\phi)\right| \geqslant u \pf & \leqslant &  A +  \mathbb{P}\po \left|\frac1n\sum_{k=1}^n   \phi( Z_k) - \pi(\phi)\right| \geqslant u \pf \,.
\end{eqnarray*}
\end{proof}

In view of this result and of Lemma~\ref{lem:alpha}, in order to prove Proposition~\ref{prop_main:Metropolis} and Theorem~\ref{thm_main:concentration_Metropolis}, it only remains to establish moment estimates for $\nu P^k P_O$ for $\nu\in\mathcal P_p(\R^{2d})$, which is the subject of the following lemma. In the rest of this section,   denote $z_\star = (x_\star,0)$ and $\varphi_\star(z) = |z-z_{\star}|$ for $z\in\R^{2d}$.

\begin{prop}\label{prop:DoeblinPM}
Under the conditions of Proposition~\ref{prop:contraction}, for all $p\geqslant 2$,  $\nu  \in\mathcal P_p(\R^{2d})$ and $n\in \N$, 
\begin{equation*}
\po \nu P^nP_O\po \varphi_\star^p\pf\pf^{1/p} \ \leqslant \    K_1 (1-\delta\kappa)^{n/2} \po  \nu   (\varphi_\star^p)\pf^{1/p} +   ( K_1 +3)\po 1 + \frac{1}{\sqrt m} + \frac{2\delta K_1K_3}{\kappa}\pf \sqrt{d+p-2}\,.
\end{equation*}
\end{prop}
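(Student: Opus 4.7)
The plan is to exploit the identity $\po \mu(\varphi_\star^p)\pf^{1/p} = \W_p(\mu,\delta_{z_\star})$ together with the Wasserstein contraction of Corollary~\ref{Cor:Wpcontract}, the fact (established inside the proof of Proposition~\ref{prop_main:nupi1}) that $P_O$ does not increase $\W_p$, and the invariance $\pi P_O = \pi$. In this way, the presence of the extra $P_O$ will essentially disappear thanks to a Markov contraction, reducing the problem to controlling $\W_p(\nu P^n,\pi_\delta)$ and the bias of $\pi_\delta$ with respect to $\pi$, both of which have already been estimated.

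More concretely, I would start from the decomposition by two triangle inequalities,
\[\W_p(\nu P^n P_O, \delta_{z_\star}) \ \leqslant \ \W_p(\nu P^n P_O, \pi_\delta P_O) \,+\, \W_p(\pi_\delta P_O, \pi P_O) \,+\, \W_p(\pi P_O, \delta_{z_\star}) \,.\]
The first two terms are bounded respectively by $\W_p(\nu P^n,\pi_\delta)$ and $\W_p(\pi_\delta,\pi)$ since $P_O$ contracts $\W_p$, and the third one equals $\po \pi(\varphi_\star^p)\pf^{1/p}$ because $\pi P_O = \pi$. Then, using that $\pi_\delta P^n = \pi_\delta$, Corollary~\ref{Cor:Wpcontract} yields
\[\W_p(\nu P^n,\pi_\delta) \ \leqslant \ K_1 (1-\delta\kappa)^{n/2}\W_p(\nu,\pi_\delta)\,,\]
and a third triangle inequality gives $\W_p(\nu,\pi_\delta) \leqslant \po\nu(\varphi_\star^p)\pf^{1/p} + \po\pi(\varphi_\star^p)\pf^{1/p} + \W_p(\pi,\pi_\delta)$.

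Finally, Proposition~\ref{prop_main:erreur1} provides $\W_p(\pi,\pi_\delta) \leqslant \frac{2\delta K_1 K_3}{\kappa}\sqrt{d+p-2}$ for $p \geqslant 2$, while Lemma~\ref{lem:moments} combined with Minkowski's inequality applied to the pointwise estimate $\varphi_\star(z) \leqslant |x-x_\star|+|v|$ gives $\po\pi(\varphi_\star^p)\pf^{1/p} \leqslant \po 1 + 1/\sqrt m\pf\sqrt{d+p-2}$. Plugging these two estimates into the above decomposition and bounding $(1-\delta\kappa)^{n/2}\leqslant 1$ on every contribution that is not the initial moment term $\po \nu(\varphi_\star^p)\pf^{1/p}$ yields the announced inequality, the constant $K_1+3$ coming from a straightforward but slightly loose regrouping of the occurrences of $\po\pi(\varphi_\star^p)\pf^{1/p}$ and $\W_p(\pi,\pi_\delta)$. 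The step that is slightly delicate is merely the bookkeeping of constants; no new probabilistic input is required beyond what has been obtained in Sections~\ref{Sec:proofOBABO} and \ref{Sec:CoupleVerlet}.
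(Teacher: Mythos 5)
Your argument is correct and follows essentially the same route as the paper: a $\W_p$ triangle inequality around $\pi_\delta$ (via the identity $\po\mu(\varphi_\star^p)\pf^{1/p}=\W_p(\mu,\delta_{z_\star})$), the contraction of Theorem~\ref{thm_main:Wpcontract}, the fact that $P_O$ is $\W_p$-nonincreasing, the bias bound of Proposition~\ref{prop_main:erreur1}, and the moment estimates of Lemma~\ref{lem:moments}. The only cosmetic difference is that the paper bounds $\po\pi_\delta P_O(\varphi_\star^p)\pf^{1/p}$ by an explicit Gaussian coupling rather than by comparison with $\pi P_O=\pi$ through the $P_O$ contraction, and as a result your derivation actually gives the slightly sharper constant $K_1+1$ in place of the stated $K_1+3$.
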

\begin{proof}
Using that $P_O$ does not increase the $\W_p$ distance (see the proof of Proposition~\ref{prop_main:nupi1}) and Theorem~\ref{thm_main:Wpcontract}
\begin{eqnarray*}
 \po  \nu P^n P_O (\varphi_\star^p)\pf^{1/p} & \leqslant &   \W_p (\nu P^nP_O,\pi_\delta P^nP_O) + \po  \pi_\delta P^nP_O(\varphi_\star^p)\pf^{1/p}\\
  & \leqslant &  K_1 (1-\delta\kappa)^{n/2}  \W_p(\nu ,\pi_\delta) + \po  \pi_\delta P_O(\varphi_\star^p)\pf^{1/p}\\
  & \leqslant &  K_1 (1-\delta\kappa)^{n/2} \po  \nu   (\varphi_\star^p)\pf^{1/p} + K_1 \po  \pi_\delta   (\varphi_\star^p)\pf^{1/p} +\po  \pi_\delta P_O(\varphi_\star^p)\pf^{1/p}\,.
\end{eqnarray*}
Let $(X,V)\sim \pi_\delta$ and, independently, $G\sim \mathcal N(0,I_d)$, so that $(X,\eta V  + \sqrt{1-\eta^2} G)\sim \pi_\delta P_O$. For  $p\geqslant 1$,
\begin{eqnarray*}
\po  \pi_\delta P_O(\varphi_\star^p)\pf^{1/p} & = & \po \mathbb E \po |(X,\eta V  + \sqrt{1-\eta^2} G)-z_\star|^p\pf \pf^{1/p}\\
& \leqslant &  \po \mathbb E \po |(X,\eta V)-z_\star|^p\pf \pf^{1/p} + \sqrt{1-\eta^2}\po \mathbb E \po | G|^p\pf \pf^{1/p}\\
& \leqslant &  \po \mathbb E \po |(X, V)-z_\star|^p\pf \pf^{1/p} + \sqrt{d+p-2}\,,
\end{eqnarray*}
so that
\begin{eqnarray*}
  \po \nu P^nP_O (\varphi_\star^p) \pf^{1/p} & \leqslant & K_1 (1-\delta\kappa)^{n/2} \po  \nu   (\varphi_\star^p)\pf^{1/p} +( K_1 +1)\po  \pi_\delta   (\varphi_\star^p)\pf^{1/p}  +   \sqrt{d+p-2}\,.
\end{eqnarray*}
The moments of $\pi_\delta$ are bounded by combining Lemma~\ref{lem:moments} and Proposition~\ref{prop_main:erreur1}, as
\begin{eqnarray*}
\po  \pi_\delta   (\varphi_\star^p)\pf^{1/p} & \leqslant  & \po  \pi    (\varphi_\star^p)\pf^{1/p} + \W_p\po \pi,\pi_\delta\pf\\
& \leqslant & \po 1 + \frac{1}{\sqrt m} + \frac{2\delta K_1K_3}{\kappa}\pf \sqrt{d+p-2}\,.
\end{eqnarray*}

\end{proof}

\begin{cor}\label{cor:nuPkPO}
Under the conditions of Proposition~\ref{prop:contraction}:
\begin{enumerate}
\item For all  $\nu  \in\mathcal P_2(\R^{2d})$ and $n\in \N$, 
\begin{equation*}
\sum_{k=0}^{n-1} \nu P^kP_O(1-\alpha) \ \leqslant \ K_9 \po \delta  \nu   (\varphi_\star^2)   + \delta^2 n d \pf 
\end{equation*}
with, considering $K_1$ and $K_3$ as in Theorem~\ref{thm_main:Wpcontract} and  Proposition~\ref{prop:WppiPV},
\begin{multline*}
K_9 \ =\ 2 L ( K_1 +3)^2\po 1 + \frac{1}{\sqrt m} + \frac{2\delta K_1K_3}{\kappa}\pf^2 \po 1 + \frac{1}{\kappa}\pf \\
\ \times \ \co \frac{\delta L}{8} + \po 1+ \frac \delta 4 + \frac{\delta^2L}4\pf \po 1+ \delta^2 \frac{L^2}{4}\pf \cf\,.   
\end{multline*}
\item If \refordren\ holds then for all $\nu  \in\mathcal P_{\ell+1}(\R^{2d})$ and $n\in \N$, 
\begin{equation*}
\sum_{k=0}^{n-1} \nu P^kP_O(1-\alpha) \ \leqslant \ K_{10} \po \delta^2  \nu   (\varphi_\star^{\ell+1})   + \delta^3 n (d+\ell-1)^{(\ell+1)/2} \pf 
\end{equation*}
with, considering $K_1$ and $K_3$ as in Theorem~\ref{thm_main:Wpcontract} and  Proposition~\ref{prop:WppiPV},
\begin{multline*}
K_{10} \ =\ \co 1+  2^{\ell} ( K_1 +3)^{\ell+1}\po 1 + \frac{1}{\sqrt m} + \frac{2\delta K_1K_3}{\kappa}\pf^{\ell+1} \po \frac{1}{\kappa}+1\pf\cf\\
\times \  \co      \frac{L+2L^2}{4}    \po 1+ \delta^2 \frac{L^2}{4}\pf   +  2^{(\ell-3)/2}L_\ell  \po 1+\frac{L\delta}{2}\pf^{3} \po 1+\delta+\frac{\delta^2L}{2}\pf^{\ell-2}  \cf \,.
\end{multline*}
\item If \refind\ holds then for all $\nu  \in\mathcal P_{\ell+1}(\R^{2d})$ and $n\in \N$, denoting $\varphi_{\star,i}(z) = |\mathcal Q^{-1}(z_i-z_{\star,i})|$ for $z\in\R^{2d}$,
\begin{equation*}
\sum_{k=0}^{n-1} \nu P^kP_O(1-\alpha) \ \leqslant \ K_{10} \po \delta^2  \sum_{i=1}^d \nu    (\varphi_{\star,i}^{\ell+1})   + \delta^3 n d \ell^{(\ell+1)/2} \pf 
\end{equation*}
with $K_{10}$ given above.
\end{enumerate}
\end{cor}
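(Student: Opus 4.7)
The plan is to combine the pointwise bounds on $1-\alpha$ from Lemma~\ref{lem:alpha} with the moment bounds of Proposition~\ref{prop:DoeblinPM}, then geometrically sum over $k$. All three parts follow the same three-step template: bound $1-\alpha(x,v)$ pointwise by a polynomial in $\varphi_\star$; integrate against $\nu P^k P_O$ via Proposition~\ref{prop:DoeblinPM}; sum from $k=0$ to $n-1$.

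For part 1, I would first use \refconvex\ to write $|\na U(x)|^2 \leqslant L^2 \varphi_\star^2$ and then $Z^2 \leqslant 2|v|^2 + \delta^2|\na U|^2/2 \leqslant 2(1+\delta^2 L^2/4)\varphi_\star^2$. Plugged into Lemma~\ref{lem:alpha}.1, this gives $1-\alpha \leqslant \delta^2 L \tilde C \varphi_\star^2$ with $\tilde C = \delta L/8 + (1+\delta/4+\delta^2L/4)(1+\delta^2L^2/4)$ (the second bracket of $K_9$). Proposition~\ref{prop:DoeblinPM} with $p=2$ and $(a+b)^2\leqslant 2a^2+2b^2$ yields $\nu P^k P_O(\varphi_\star^2) \leqslant 2K_1^2(1-\delta\kappa)^k\nu(\varphi_\star^2) + 2(K_1+3)^2 M^2 d$, where $M := 1+1/\sqrt m+2\delta K_1K_3/\kappa$. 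Summing using $\sum_{k=0}^{n-1}(1-\delta\kappa)^k \leqslant 1/(\delta\kappa)$ produces the two terms $\delta\nu(\varphi_\star^2)$ and $\delta^2 nd$, and bounding both coefficients uniformly by $K_9$ closes the estimate.

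For part 2 I would follow the same template starting from Lemma~\ref{lem:alpha}.2. Using $|v|,|x-x_\star|\leqslant\varphi_\star$ and \refconvex, I get $Z^3 \leqslant 2^{3/2}(1+\delta L/2)^3\varphi_\star^3$ and $|x-x_\star|+\delta Z \leqslant \sqrt 2(1+\delta+\delta^2L/2)\varphi_\star$, which collapses the $L_\ell$ contribution into $\delta^3 L_\ell \, 2^{(\ell-3)/2}(1+\delta L/2)^3(1+\delta+\delta^2L/2)^{\ell-2}\varphi_\star^{\ell+1}$ — the exponent $(\ell-3)/2$ arising after pulling the $\delta^3/8$ factor and using $3/2+(\ell-2)/2-3 = (\ell-3)/2$. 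The $\varphi_\star^2$-type contributions are bounded using $\varphi_\star^2 \leqslant 1+\varphi_\star^{\ell+1}$ (valid for $\ell\geqslant 2$) and regroup to $(L+2L^2)(1+\delta^2 L^2/4)/4$, so that $1-\alpha \leqslant \delta^3 C_2(1+\varphi_\star^{\ell+1})$ with $C_2$ exactly the second bracket in $K_{10}$. Proposition~\ref{prop:DoeblinPM} with $p=\ell+1$ and $(a+b)^{\ell+1}\leqslant 2^\ell(a^{\ell+1}+b^{\ell+1})$, combined with $(1-\delta\kappa)^{(\ell+1)/2}\leqslant 1-\delta\kappa$ (hence $\sum_k (1-\delta\kappa)^{k(\ell+1)/2}\leqslant 1/(\delta\kappa)$), concludes the estimate; the additive $\delta^3 n$ term is absorbed into $\delta^3 n(d+\ell-1)^{(\ell+1)/2}$ since that factor is at least $1$.

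Part 3 reduces to a coordinate-wise application of part 2. By the rotation-equivariance of the OBABO chain noted just after Assumption~\refind, I may assume $\mathcal Q = I$, so $H = \sum_{i=1}^d H_i$ with $H_i(y_i,w_i) = U_i(y_i) + w_i^2/2$, and $P$, $P_O$ factorize as products of $d$ independent one-dimensional OBABO transitions. Then $1-\alpha \leqslant |H\circ \Phi_V - H| \leqslant \sum_i |H_i\circ\Phi_V^{(i)} - H_i|$, and each summand is bounded, by the one-dimensional case of part 2, by $\delta^3 C_2(1+\varphi_{\star,i}^{\ell+1})$. Proposition~\ref{prop:DoeblinPM} applied in dimension $1$ to each factor gives a moment factor $\sqrt\ell$, hence $\ell^{(\ell+1)/2}$ after raising to the power $\ell+1$; summing over $i$ and $k$ produces $\sum_i\nu_i(\varphi_{\star,i}^{\ell+1})$ and $nd\,\ell^{(\ell+1)/2}$ with the same prefactor $K_{10}$.

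No conceptually new idea is required, so the main obstacle is purely bookkeeping: matching the explicit forms of $K_9$ and $K_{10}$ demands careful arithmetic regrouping. For instance, the factor $(L+2L^2)/4$ in $K_{10}$ (rather than the more natural $L^2/8 + L/4$-type expression) is obtained by overestimating $L^2/8 \leqslant L^2/2$ and reshuffling the $(1+\delta L)(1+\delta^2L^2/4)$ product, which is valid under $\delta \leqslant 3/2$ (much weaker than the standing $\delta\leqslant m/(33\gamma^3)$). Ensuring the coefficient of $\nu(\varphi_\star^{\ell+1})$ (which picks up the $1/\kappa$ from the geometric sum) and the coefficient of $n(d+\ell-1)^{(\ell+1)/2}$ (which only picks up $n$) fit a common factor $1 + 2^\ell(K_1+3)^{\ell+1}M^{\ell+1}(1/\kappa+1)$ is then straightforward.
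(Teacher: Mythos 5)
Your proposal is correct and follows essentially the same route as the paper: bound $1-\alpha$ pointwise via Lemma~\ref{lem:alpha} (using $|\na U(x)|\leqslant L|x-x_\star|$ under \refconvex), integrate against $\nu P^k P_O$ via Proposition~\ref{prop:DoeblinPM} with the elementary $(s+t)^p\leqslant 2^{p-1}(s^p+t^p)$, sum geometrically using $\sum_k(1-\delta\kappa)^{kp/2}\leqslant 1/(\delta\kappa)$, and in the separable case reduce to the one-dimensional bound by $1-\alpha\leqslant\sum_i(1-\alpha_i)$. One cosmetic remark: your explanation of the exponent in $2^{(\ell-3)/2}$ (the identity ``$3/2+(\ell-2)/2-3$'') actually gives $(\ell-5)/2$, which is the contribution of the $\varphi_\star^{\ell+1}$-term alone; the stated $2^{(\ell-3)/2}$ absorbs in addition the $2^{-3/2}\varphi_\star^3$-term after $\varphi_\star^3\leqslant 1+\varphi_\star^{\ell+1}$, using $2^{-3/2}+2^{(\ell-5)/2}\leqslant 2^{(\ell-3)/2}$ for $\ell\geqslant 2$ — a small slip in the explanation, not in the result.
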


\begin{proof}
The first two claims are straightforward consequences of the control of $1-\alpha$ given by Lemma~\ref{lem:alpha} and of the moment estimates of Proposition~\ref{prop:DoeblinPM}. Indeed,   under Assumptions~\refsmooth\ and \refconvex, $|\na U(x)|\leqslant L|x-x_\star|$ for all $x\in\R^d$, so that Lemma~\ref{lem:alpha} implies 
  \begin{equation}\label{eq:bound_alpha_rough}
  1-\alpha(x,v) \ \leqslant \   \delta^2 L\co \frac{\delta L}{8} + \po 1+ \frac \delta 4 + \frac{\delta^2L}4\pf \po 1+ \delta^2 \frac{L^2}{4}\pf \cf\po |v|^2 + |x-x_\star|^2\pf \,.
  \end{equation}
 Similarly, under the additional condition \refordren, using that
 \[ 1   + \po |x-x_\star|+\delta \po |v| + \frac\delta2|\na U(x)|\pf \pf^{\ell-2}  \ \leqslant \ \po 1+ \delta + \frac{\delta^2 L}{2}\pf^{\ell -2} \po 1 + \po |v|+|x-x_\star|\pf^{\ell-2}\pf\]
 and that
 \[\po |v|+ \frac\delta2|\na U(x)|\pf^3 \ \leqslant \  \po 1+ \frac{\delta L}{2}\pf^3 \po |v| +|x-x_\star|\pf^3 \,,\]
 we see that Lemma~\ref{lem:alpha} implies  
    \begin{multline}
  1-\alpha(x,v) \ \leqslant \    \delta^3 L\co \frac{  L}{8} +     \frac{1+L} 4    \po 1+ \delta^2 \frac{L^2}{4}\pf \cf\po |v|^2 + |x-x_\star|^2\pf  \\
  +   \frac{\delta^3}8 L_\ell \po 1+\frac{L\delta}{2}\pf^{3} \po 1+\delta+\frac{\delta^2L}{2}\pf^{\ell-2} \po \po   |x-x_\star|+|v|\pf^3  + \po   |x-x_\star|+|v|\pf^{\ell+1}\pf \\
  \ \leqslant \  \delta^3  \co      \frac{L+2L^2}{4}    \po 1+ \delta^2 \frac{L^2}{4}\pf   +  \frac{L_\ell}8 \po 1+\frac{L\delta}{2}\pf^{3} \po 1+\delta+\frac{\delta^2L}{2}\pf^{\ell-2} \po 2^{3/2}+2^{(\ell+1)/2}\pf\cf  \\
  \ \times \ \po 1 +    |z-z_\star|^{\ell+1}\pf \label{eq:bound_alpha_rough_ordren}
  \end{multline}
  where we bounded $(|x-x_\star|+|v|)^k$ by $2^{k/2}|z-z_\star|^k$ for $k=3$ and $\ell+1$ and $|z-z_\star|^k$ by $1+|z-z_\star|^{\ell +1}$ for $k = 2$ and $3$. Moreover, using that $(s+t)^p \leqslant 2^{p-1}(s^p+t^p)$ for $s,t>0$, we see that Proposition~\ref{prop:DoeblinPM} implies that for all $p\geqslant 2$,
\[ \nu P^nP_O\po \varphi_\star^p\pf \ \leqslant \   2^{p-1} K_1^p (1-\delta\kappa)^{np/2}  \nu   (\varphi_\star^p) +  2^{p-1} ( K_1 +3)^p\po 1 + \frac{1}{\sqrt m} + \frac{2\delta K_1K_3}{\kappa}\pf^p (d+p-2)^{p/2}\,,\]
so that
\[
\sum_{k=0}^{n-1} \nu P^kP_O\po \varphi_\star^p\pf \ \leqslant \  2^{p-1} ( K_1 +3)^p\po 1 + \frac{1}{\sqrt m} + \frac{2\delta K_1K_3}{\kappa}\pf^p \po \frac{1}{\delta\kappa } \nu   (\varphi_\star^p) + n (d+p-2)^{p/2}\pf
\]
Combining this with \eqref{eq:bound_alpha_rough} (with $p=2$) gives the expression of $K_9$, and with \eqref{eq:bound_alpha_rough_ordren}  (with $p=\ell+1$) gives $K_{10}$.

In the separable case (condition \refind),  denoting $(y,v)= (\mathcal Q^{-1} x,\mathcal Q^{-1} v)$ and $H_i(y_i,w_i)=U_{i}(y_i)+|w_i|^2/2$ for $i\in\cco 1,d\ccf$, using that $H(x,v) = \sum_{i=1}^d H_i(y_i,w_i)$ and that $\mathcal Q^{-1}  \Phi_V = \Phi_V \mathcal Q^{-1} $, we get that
\[1-\alpha(x,v) \ \leqslant \ 1 - e^{-\sum_{i=1}^d \po H_i\circ \Phi_V (y_i,w_i) - H_i(y_i,w_i)\pf_+} \ := \ 1 - \prod_{i=1}^d \alpha_i(y_i,w_i)\,,\]
where $\alpha_i$ is the acceptance probability of a one-dimensional OM(BAB)O chain with potential $U_i$. Bounding the probability of the union by the sum of the probability we get that
\[1-\alpha(x,v) \ \leqslant \ \sum_{i=1}^d \po 1 - \alpha_i(y_i,w_i)\pf\,.\]
Moreover, the OBABO transition $P$ is such that, up to the orthonormal change of variables  $(y,w)=(\mathcal Q^{-1} x,\mathcal Q^{-1} v)$, the coordinates (position/velocities) of the chain are independent one-dimensional OBABO chains. Similarly the transition $P_O$ preserves this independence. Applying the second claim of Corollary~\ref{cor:nuPkPO} (with $d=1$) to the $i^{th}$ one-dimensional chain gives 
\begin{equation*}
\sum_{k=0}^{n-1} \nu P^kP_O(1-\alpha_i) \ \leqslant \ K_{10} \po \delta^2   \nu    (\varphi_{\star,i}^{\ell+1})   + \delta^3 n   \ell^{(\ell+1)/2} \pf 
\end{equation*}
for $i\in\cco 1,d\ccf$. Summing these $d$ inequalities concludes.

\end{proof}

\begin{proof}[Proof of Proposition~\ref{prop_main:Metropolis}]
Simply combine Lemma~\ref{lem:TVDoeblin} and  Corollary~\ref{cor:nuPkPO}.

\end{proof}

\begin{proof}[Proof of Theorem~\ref{thm_main:concentration_Metropolis}]
This follows from the concentration bound for the unadjusted chain (Theorem~\ref{thm_main:concentration}), the bound between the adjusted and unadjusted process obtained by combining Lemma~\ref{lem:TVDoeblin} and  Corollary~\ref{cor:nuPkPO}, and the dual representation of the $\W_1$ distance (see e.g. \cite{VillaniOldNew}) that implies that $|\pi(\phi)-\pi_\delta(\phi)|\leqslant\W_1(\pi,\pi_\delta)$ for all $1$-Lipschitz function $\phi$.
\end{proof}

\section*{Acknowledgments}

P. Monmarch\'e thanks Tony Leli\`evre, Benedict Leimkuhler, Gabriel Stoltz and Alaind Durmus for  stimulating discussions on the topic of Langevin integrators. He acknowledges financial support from the French ANR grant EFI (Entropy, flows, inequalities, ANR-17-CE40-0030).

\bibliographystyle{plain}
\bibliography{biblio}

\end{document}